\newcommand{\N}{\mathbb{N}}
\newcommand{\Z}{\mathbb{Z}}
\newcommand{\FF}{\mathbb{F}}
\newcommand{\Cc}{\mathcal{C}}
\def\co{\colon\thinspace}
\DeclareMathOperator{\colim}{colim}
\DeclareMathOperator{\Map}{Map}
\DeclareMathOperator{\id}{id}
\newcommand{\pTop}{\mathbf{Top}_\bullet}
\newcommand{\Spt}{\mathbf{Spt}}
\newcommand{\ep}{\mathbf{D}}
\newcommand{\Hp}{\mathrm{H}\mathbb{Z}/p}
\newcommand{\dlc}{\mathcal{Q}}
\newcommand{\brc}{\mathcal{L}}
\newcommand{\stp}{\mathrm{P}}
\newcommand{\units}{\times}
\newcommand{\even}{\, \mathrm{even}\, }
\newcommand{\odd}{\, \mathrm{odd}\, }
\newcommand{\iso}{\cong}
\newcommand{\shift}[1]{\mathrm{sh}^{#1}}
\theoremstyle{plain}
\newtheorem{Theorem}{Theorem} [section]
\newtheorem{Lemma}[Theorem]{Lemma}
\newtheorem{Corollary}[Theorem]{Corollary}
\newtheorem{Proposition}[Theorem]{Proposition}
\newtheorem{Conjecture}[Theorem]{Conjecture}
\newtheorem{nTheorem}[Theorem]{Theorem}
\newtheorem{Definition}[Theorem]{Definition}
\theoremstyle{definition}
\newtheorem{Remark}[Theorem]{Remark}
\def\qed{%
  \ifmmode 
  \tag*{\qedsymbol}
  \else \leavevmode\unskip\penalty9999 \hbox{}\nobreak\hfill
  \quad\hbox{\qedsymbol}\pagebreak[1]\vskip 2\topsep\fi}
\numberwithin{equation}{section}
\title{The Arone-Goodwillie spectral sequence for $\Sigma^{\infty}\Omega^n$ and topological realization at odd primes}
\author{S.~B\"uscher\footnote{Institut f\"ur Mathematik, Universit\"at Osnabr\"uck, Osnabr\"uck, Germany} \ F.~Hebestreit\footnote{Mathematisches Institut, Universit\"at M\"unster, M\"unster, Germany} \ O.~R\"ondigs\footnotemark[1] \ M.~Stelzer\footnotemark[1]}
\date{November 1, 2011}
\begin{document}

\maketitle

\section{Introduction}
\label{sec:introduction}

A basic task of algebraic topology is to construct meaningful
algebraic invariants of topological spaces. Singular cohomology
can be viewed
as such an invariant, but in many different ways. In order to specify
the algebraic natures of singular cohomology appearing in this article, 
fix a finite prime field $\FF_p$ as coefficients.
A rather elementary 
view is to consider $H^\ast(X)$, the direct sum of all cohomology
groups of a topological space $X$, as a graded $\FF_p$-vector space.
Via cup product, it in addition becomes an $\FF_p$-algebra. Furthermore, $H^\ast(X)$ is a
module over the Steenrod algebra
$\mathcal{A}_p$, the algebra of all stable operations in singular cohomology
with $\FF_p$-coefficients. 
The realization question under consideration
is whether a given algebraic
structure, for example an $\mathcal{A}_p$-module $M$ or
an $\mathcal{A}_p$-algebra $A$, is isomorphic
to $H^\ast(X)$ for some topological space. There are
some obvious restrictions (the module should be unstable),
but also more delicate ones. Nick Kuhn
formulated the following realization conjecture in \cite{Kuhn:realizing}.

\begin{Conjecture}\label{conj:kuhn}
  For any topological space $X$ such that $H^\ast(X)$ is a
  finitely generated $\mathcal{A}_p$-module, 
  $H^\ast(X)$ is already a finite-dimensional $\FF_p$-vector space. 
\end{Conjecture}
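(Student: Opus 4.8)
The plan is to prove the contrapositive in the sharpened form: if $X$ is a pointed space with $H^{*}(X;\mathbb{F}_p)$ finitely generated over $\mathcal{A}_{p}$ and of infinite $\mathbb{F}_p$-dimension, then a contradiction ensues. First I would make the standard reductions, using that finite generation over $\mathcal{A}_{p}$ forces $H^{*}(X;\mathbb{F}_p)$ to be of finite type: after $p$-completion, and a separate elementary treatment of the contribution of $\pi_{1}(X)$, one may assume that $X$ is simply connected, $p$-complete and of finite type. Next I would record the purely algebraic observation that a finitely generated unstable $\mathcal{A}_{p}$-module is finite-dimensional precisely when every element generates a finite submodule; hence the standing hypothesis really exhibits a class generating an infinite unstable submodule of $H^{*}(X)$, and in particular $\widetilde{H}_{*}(X;\mathbb{F}_p)\neq 0$. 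Fix a nonzero class $\bar{x}\in\widetilde{H}_{*}(X;\mathbb{F}_p)$ of lowest degree.

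The core of the argument is to feed $\Sigma^{n}X$ into the Goodwillie tower of the functor $\Sigma^{\infty}\Omega^{n}$ and then let $n\to\infty$. For each $n$ this tower has first stage $\Sigma^{\infty}X$, converges to $\Sigma^{\infty}\Omega^{n}\Sigma^{n}X$, and has $d$-th layer $\bigl(\partial_{d}(\Sigma^{\infty}\Omega^{n})\wedge(\Sigma^{n}X)^{\wedge d}\bigr)_{h\Sigma_{d}}$, where by Arone's identification $\partial_{d}(\Sigma^{\infty}\Omega^{n})$ is built from the partition-complex spectrum $\partial_{d}(\mathrm{Id})$ by Thomification along $n$ copies of the canonical $\Sigma_{d}$-representation; thus the Arone--Goodwillie spectral sequence computes $H_{*}(\Omega^{n}\Sigma^{n}X;\mathbb{F}_p)$ out of $H_{*}(X;\mathbb{F}_p)$ through these layers. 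I would then feed the hypothesis that $H^{*}(X)$ is finitely generated over $\mathcal{A}_{p}$ into this spectral sequence to obtain a finiteness constraint on the layers that is uniform in $n$, and hence survives to the colimit $n\to\infty$, where the tower degenerates to the Snaith splitting $\Sigma^{\infty}QX\simeq\bigvee_{d\ge 1}(\Sigma^{\infty}X)^{\wedge d}_{h\Sigma_{d}}$.

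Because the Snaith splitting is a genuine wedge decomposition, each extended power $(\Sigma^{\infty}X)^{\wedge d}_{h\Sigma_{d}}$ is a retract of $\Sigma^{\infty}QX$, so that the constraint of the previous step would make its mod-$p$ cohomology a finitely generated $\mathcal{A}_{p}$-module. But the $d=2$ extended power contains, for the class $\bar{x}$, the infinite family of Dyer--Lashof classes $\{Q^{s}\bar{x}\}_{s\ge|\bar{x}|}$ together with their higher Bocksteins at odd $p$, and the Nishida relations show that Steenrod operations applied to a class $Q^{s}\bar{x}$ produce only terms of the form $Q^{s'}(-)$ with $s'\le s$; such a family cannot be generated over $\mathcal{A}_{p}$ by finitely many elements. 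This contradiction forces $\widetilde{H}_{*}(X;\mathbb{F}_p)=0$, hence $\widetilde{H}^{*}(X;\mathbb{F}_p)=0$ since $X$ is simply connected, $p$-complete and of finite type --- contradicting the assumption that $H^{*}(X;\mathbb{F}_p)$ is infinite-dimensional, and proving the conjecture.

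The main obstacle, and the reason the odd-primary case requires the work of this paper rather than a replay of Kuhn's argument at $p=2$, lies in the middle step: gaining adequate control of the Arone--Goodwillie spectral sequence at odd primes. For $p=2$ the mod-$2$ homology of the partition-complex layers and the bookkeeping of the relevant operations are classical, the $E_{2}$-term is transparent, and the degeneration needed for the uniform finiteness bound is essentially formal. At odd primes the $\Sigma_{d}$-equivariant homotopy type of $\partial_{d}(\Sigma^{\infty}\Omega^{n})$ is far more intricate, its $E_{2}$-term must first be identified (in terms of the relevant derived functors, built from the mod-$p$ homology of the partition complexes), and the interaction of the mod-$p$ Bockstein with the differentials must be controlled; supplying precisely this input is the purpose of the analysis of the Arone--Goodwillie spectral sequence developed in the body of the paper, after which the argument above goes through.
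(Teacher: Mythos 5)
First, a point of logic about what you are proving: the statement you were given is stated in the paper as a \emph{Conjecture}, and the paper does not prove it in full generality. The body of the paper only establishes the case in which the Bockstein acts trivially in high degrees (Theorem~\ref{thm:finite-bockstein}), and it says explicitly that the case of a desuspension class of odd origin remains unsolved; the general case is attributed to \cite{Gaudens-Schwartz}. The route the paper does take is also essentially opposite to yours: one does not suspend $X$ and study $\Omega^n\Sigma^n X$, but rather uses Lannes' functor $\bar T$ and the functor $\Delta(X)=\Map(B\Z/p,X)/X$ to replace a hypothetical counterexample by a space $Y$ with $\bar T^2 H^\ast(Y)=0$, so that in large degrees $H^\ast(Y)\cong M\otimes\Phi(k,\infty)$ for a \emph{finite} module $M$; one then extracts a subquotient space $Z$ with $H^\ast(Z)\cong M\otimes\Phi(k,k+2)$ and derives a contradiction with Theorem~\ref{thm:nneq} by analysing the Arone--Goodwillie spectral sequence for $\Omega^{n'}$ of (a suspension of a skeletal quotient of) $Z$, where $n$ is the desuspension index of $M$. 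The whole point is to loop \emph{down} a space realizing a rigid finite module, exploiting gaps between the summands $M\otimes t^{p^{k+j}}$; no such reduction appears in your outline.

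The concrete gap in your argument is the step where you ``feed the hypothesis that $H^\ast(X)$ is finitely generated over $\mathcal{A}_p$ into the spectral sequence to obtain a finiteness constraint on the layers that is uniform in $n$.'' No such constraint exists, and the contradiction you aim for at the end is not a contradiction. The spectral sequence converges to $H^\ast(\Omega^n\Sigma^n X)$, about which the hypothesis says nothing; already for $X=S^0$ the cohomology of $QS^0$ is not finitely generated over $\mathcal{A}_p$, even though $H^\ast(S^0)$ is one-dimensional. Consequently the observation that an extended power $(\Sigma^\infty X)^{\wedge d}_{h\Sigma_d}$ (which is $d=p$, not $d=2$, for the Dyer--Lashof classes $\beta^\epsilon Q^s\bar x$ at odd $p$) carries an infinite family closed under Steenrod operations contradicts nothing: nobody asserted that $H^\ast(QX)$ or its Snaith summands are finitely generated over $\mathcal{A}_p$. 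Without an input that converts ``finitely generated over $\mathcal{A}_p$'' into a statement about a \emph{finite, explicitly structured} module --- which is exactly what the Lannes-functor reduction of Kuhn and Schwartz provides, and what the notion of a desuspension class of even origin (Definition~\ref{def:des-class}) is then used to exploit --- the argument cannot close. Your final paragraph correctly identifies that controlling the odd-primary spectral sequence is hard, but the difficulty the paper actually confronts (and only partially resolves) is the parity issue: when the detecting operation is $\beta\stp^i$ rather than $\stp^i$, the relation to $p$-th cup powers in $H^\ast(\Omega^n X)$ breaks down, and this is why the full conjecture is not proved here.
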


Kuhn proved Conjecture~\ref{conj:kuhn} under an
additional assumption on the behaviour of the
Bockstein. 
He employed Lannes' $T$-functor to construct from a hypothetical counterexample
to Conjecture~\ref{conj:kuhn} an even more hypothetical 
topological space $Y$
whose cohomology $H^\ast(Y)$ has a specific finite composition series.
Further ingredients in his proof that such a space $Y$ 
cannot exist are the mod $p$ solution  
of the Hopf invariant one problem, and 
results of Ravenel and Mahowald on the 
mod $p$ Kervaire invariant problem. 

Lionel Schwartz proved the general case 
of Conjecture~\ref{conj:kuhn} for $p=2$ 
(without assumptions on the Bockstein)
in \cite{Schwartz:conjecture}.
Continuing with Kuhn's dubious space $Y$, 
his key idea was to analyze the
induced unstable $\mathcal{A}_p$-algebra structure
of $H^{\ast}(\Omega^n Y)$ for an appropriate $n$ 
via an iterated use of the Eilenberg-Moore spectral sequence.
The article \cite{Schwartz:conjecture} contains a brief discussion
of the odd primary case as well; an erratum \cite{Schwartz:Errata}
pointing to a complete proof in \cite{Gaudens-Schwartz} appeared recently.

As explained by Kuhn in \cite{Kuhn:nonrealization},
the iterated Eilenberg-Moore spectral sequences may be replaced with
a single spectral sequence, defined by the Goodwillie tower of the functor
$X\mapsto \Sigma^{\infty}\Omega^n X$. This approach
lead him to a streamlined proof of Conjecture~\ref{conj:kuhn} 
at the prime 2 via nonrealization theorems 
for certain finite unstable modules 
over the Steenrod algebra $\mathcal{A}_2$.

The present paper is rooted in our attempt to 
generalize Kuhn's approach in \cite{Kuhn:nonrealization} to odd primes.
Our study of the Arone-Goodwillie spectral sequence at an odd prime 
is parallel to Smith's classical investigation of the 
Eilenberg-Moore spectral sequence in \cite{Smith:lectures}.
In order to state our main theorem, let
$\Phi(k,k+2)$ be the quotient of the sub-$\mathcal{A}_p$-module of
$H^\ast\bigl(K(\FF_p,1)\bigr)$ generated by 
an element $t\neq 0$ with $\lvert t\rvert =2$, having $\FF_p$-basis
$\{t^{p^k},t^{p^{k+1}},t^{p^{k+2}}\}$. The module $\Phi(k,k+2)$
is closely related to the free unstable module $F(1)$ on a
one-dimensional class described in \cite{Kuhn:realizing}.
The notion of
a desuspension class of even origin is given in
Definition~\ref{def:des-class}.

\begin{nTheorem}\label{thm:main-intro}
  Let $M$ be an unstable $\mathcal{A}_p$-module of finite type, 
  concentrated in degrees $[\ell,m]$. Suppose $M$
  contains a  
  desuspension class of even origin.
  If $X$ is a topological space with
  \[ \widetilde H^\ast(X) \cong M \otimes \Phi(k,k+2) \]
  as $\mathcal{A}_p$-modules, then $2p^k \leq (p^2-1)m + p(m-\ell)$.
\end{nTheorem}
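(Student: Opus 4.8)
The statement is a nonrealization result, and the plan is to argue by contradiction, analysing the Arone--Goodwillie tower of $\Sigma^{\infty}\Omega^n$ in the spirit of Kuhn's argument \cite{Kuhn:nonrealization} and of Smith's treatment of the Eilenberg--Moore spectral sequence \cite{Smith:lectures}. Assume $X$ is as in the statement, but that $2p^k>(p^2-1)m+p(m-\ell)$; note that this already forces $2p^k>m\ge\ell$, so that $P^{p^k}$ and $P^{p^{k+1}}$ annihilate the finite unstable module $M$. Since $M\otimes\Phi(k,k+2)$ lives in degrees $\ge\ell+2p^k$, the space $X$ is $(\ell+2p^k-1)$-connected, of finite type, and (replacing it by a skeleton) we may assume $H^{\ast}(X;\FF_p)$ vanishes above degree $m+2p^{k+2}$. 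Let $\mu\in M$ be the given desuspension class of even origin. Because the only positive Steenrod operations on $t^{p^k}\in H^{\ast}K(\FF_p,1)$ are the Frobenii $P^{p^{j}}$, the class $\mu\otimes t^{p^k}$ generates a shifted copy of $\Phi(k,k+2)$ inside $\widetilde H^{\ast}(X)$ and is again a desuspension class of even origin; let $n$ be the corresponding number of desuspensions, so that $\Omega^nX$ is highly connected, the Goodwillie tower of $\Sigma^{\infty}$ at $\Omega^nX$ converges, and we have the Arone--Goodwillie spectral sequence
\[
E_1^{s,\ast}=H^{\ast}\!\bigl(D_sF(X)\bigr)\ \Longrightarrow\ H^{\ast}(\Omega^nX),\qquad F=\Sigma^{\infty}\Omega^n,\quad D_sF(X)\simeq\bigl(\partial_sF\wedge X^{\wedge s}\bigr)_{h\Sigma_s},
\]
whose differentials raise weight, whose weight-one line is $\widetilde H^{\ast+n}(X)$ with its usual $\Ac_p$-module structure, and whose $s$-th layer for $s\ge2$ is roughly $(2p^k s-1)$-connected, so that in each degree only finitely many layers interact and low-weight low-degree classes are permanent cycles.

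Write $\tau=\sigma^n(\mu\otimes t^{p^k})\in H^{\ast}(\Omega^nX)$, of even degree, for the image of $\mu\otimes t^{p^k}$ under the $\Ac_p$-linear cohomology suspension; by connectivity together with the desuspension hypothesis, $\tau\ne0$. Since $P^{p^k}\mu=P^{p^{k+1}}\mu=0$, the Cartan formula gives on the weight-one line $P^{p^k}(\mu\otimes t^{p^k})=\mu\otimes t^{p^{k+1}}$ and $P^{p^{k+1}}(\mu\otimes t^{p^{k+1}})=\mu\otimes t^{p^{k+2}}$; more generally, under the contradiction hypothesis every class of the ``clusters'' $M\otimes t^{p^{k+1}}$ and $M\otimes t^{p^{k+2}}$ equals $P^{p^k}$, respectively $P^{p^{k+1}}P^{p^k}$, of a class of the cluster $M\otimes t^{p^k}$ whose suspended degree is too small for that operation to be admissible in the \emph{unstable} algebra $H^{\ast}(\Omega^nX)$. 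Hence all of $M\otimes t^{p^{k+1}}$ and $M\otimes t^{p^{k+2}}$ die in the spectral sequence; since differentials raise weight, each such class supports a differential landing in $D_pF(X)$, $D_{p^2}F(X)$, or higher, and following $\mu\otimes t^{p^{k+1}}$ and $\mu\otimes t^{p^{k+2}}$ should identify these differentials, modulo lower weight, with $\beta Q^{p^k}\tau$ in weight $p$ and with $\beta Q^{p^{k+1}}Q^{p^k}\tau$ in weight $p^2$: the Steenrod operations that instability forbids in weight one re-emerge as odd-primary Dyer--Lashof operations on $\tau$ in weights $p$ and $p^2$.

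The contradiction is then a degree count. On an $n$-fold loop space only the ``$n$-restricted'' Dyer--Lashof operations are available, whose iterated excess is bounded in terms of $n$; so the weight-$p^2$ classes producible from $\tau$ fill only a bounded band of internal degrees, while the cluster $M\otimes t^{p^{k+2}}$ that they must absorb has width $m-\ell$ and sits around degree $2p^{k}(p^{2}-1)$. Matching these, with careful bookkeeping --- via the homology of $\Sigma_p$, $\Sigma_{p^2}$ and of $\partial_{\bullet}(\Sigma^{\infty}\Omega^n)$ --- of which $B\Sigma_{p^{j}}$-classes, equivalently which $\beta P^{p^{j}}$ rather than $P^{p^{j}}$ (whence the role of ``even origin''), actually occur, should yield exactly $2p^k\le(p^2-1)m+p(m-\ell)$, contradicting the assumption and proving the theorem. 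I expect this last step to be the main obstacle: one must compute the $\Ac_p$-module structure of $H^{\ast}(D_pF(X))$ and $H^{\ast}(D_{p^2}F(X))$ sharply enough to see which differentials are forced and in which internal degrees, and then extract the stated constant from the interplay of the excess/instability restrictions with the width $m-\ell$ of $M$; the correction terms from the $\Ac_p$-action on $M$ and the odd-primary Bockstein bookkeeping are the chief technical nuisances.
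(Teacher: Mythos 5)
Your overall framework (argue by contradiction, truncate $X$ to a skeleton, loop down $n$ times and run the Arone--Goodwillie spectral sequence for $\Sigma^\infty\Omega^n$, with the weight-one line carrying $\widetilde H^{\ast+n}(X)$) matches the paper's setup for Theorem~\ref{thm:nneq}. But the mechanism you propose for the contradiction is wrong. You claim that every class of $M\otimes t^{p^{k+1}}$ and $M\otimes t^{p^{k+2}}$ is $\stp^{p^k}$ (resp.\ $\stp^{p^{k+1}}\stp^{p^k}$) of a class ``whose suspended degree is too small for that operation to be admissible'' in $H^\ast(\Omega^nX)$, and hence that these clusters must die in the spectral sequence. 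This fails on both counts. First, instability does not exclude $\stp^{p^k}$ on $\sigma^n(\mu\otimes t^{p^k})$: that class has degree $\lvert\mu\rvert+2p^k-n$, and $p^k\le(\lvert\mu\rvert+2p^k-n)/2$ is equivalent to $n\le\lvert\mu\rvert$, which always holds since the desuspension index of a module concentrated in $[\ell,m]$ is at most $\ell$. Second, in the cohomological spectral sequence the weight-one column can only be \emph{hit} by differentials, not support them, and in the paper's setting (after suspending the truncated space and looping correspondingly more) the relevant classes are shown to \emph{survive} to nonzero elements $\alpha,\beta,\gamma$ of the abutment --- Lemma~\ref{lem:power-gamma-nonzero} is devoted precisely to proving $\stp^i(\gamma)\ne0$. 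There is no forced dying, and no differential of the form $\beta Q^{p^k}\tau$ enters the argument.

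The actual contradiction in the paper lives in the unstable \emph{algebra} structure of the abutment, generalizing the $n=0$ case (Proposition~\ref{prop:neqn}, where the powers $b,b^2,\dotsc,b^p$ must cross a gap, forcing $b^p=0$, while the module structure forces $b^p=\stp^{p^k+i}\cdot b\ne0$). For $n>0$ the gap is filled by Pontryagin products in higher columns, so one instead takes a permanent cycle $\delta$ representing the dual Dyer--Lashof class $\dlc^0(a)$ (Corollary~\ref{cycl}), proves $\stp^{(p-1)p^k}(\delta)=-\alpha\cup\beta^{p-1}$ via the cohomological Nishida relations (Proposition~\ref{nish}, Lemma~\ref{lem:power-delta}), and then shows that $\stp^{p^k}\stp^{(p-1)p^k}(\delta)$ is simultaneously nonzero (it equals $-\beta^p=-\stp^i(\gamma)$) and zero (by decomposing $\stp^{p^k}\stp^{(p-1)p^k}$ through the Adem relations, Lemma~\ref{subalg}, and counting gaps in the filtration, Lemma~\ref{lem:17}). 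None of this machinery --- the three-term module $\Phi(k,k+2)$ needed to form $\alpha\cup\beta^{p-1}$, the extra suspension $Y=\Sigma^{2i+1}X^\prime$ needed to kill stray differentials, the dual operations and their Nishida relations --- appears in your plan, and the ``careful bookkeeping'' you defer is exactly where the stated constant $(p^2-1)m+p(m-\ell)$ comes from. As written, the proposal would not produce the theorem.
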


Theorem~\ref{thm:main-intro}
generalizes Kuhn's nonrealization theorems mentioned above
only in roughly half of all the cases.
The reason for this may be traced back to the fact that, 
at odd primes, the parity of
certain elements determines the
type of operation detecting
the desuspension behaviour of a given unstable 
$\mathcal{A}_p$-module $M$. 
Only in the case the Bockstein is not involved, a direct relation to
an unstable algebra structure on $M$ can be made.
The arguments leading to the nonrealization theorems use
this relation in the unstable algebra $H^{\ast}(\Omega^n X)$
in an essential way. 
Theorem~\ref{thm:main-intro} implies a new proof of 
Kuhn's conjecture~\ref{conj:kuhn} in the case that the 
Bockstein
acts trivially in high degrees.

Gaudens and Schwartz
gave a proof
of a generalized form of Kuhn's  conjecture~\ref{conj:kuhn} 
for all primes~\cite{Gaudens-Schwartz}.  
However, contrary to Theorem \ref{thm:main-intro}, 
their work gives no
information about topologically realizing  
finite modules by the very nature of their argument.

The paper is organized as follows. Section~\ref{sec:arone-goodw-spectr} 
recalls some facts about the spectral sequence from work of Goodwillie, 
Arone, Ahearn and Kuhn. Besides the right $\mathcal{A}_p$-action, 
the homology spectral sequence comes with an action
of Dyer-Lashof and  Browder operations. 
The interplay between these structures is essential to our proofs. 
In order to employ algebra structures, we switch to cohomology.
This forces us to define and study operations dual to the
corresponding homology operations, which is presented 
in section~\ref{sec:operations}.
The first nontrivial differentials are 
investigated and related to the operational structure in 
section~\ref{sec:some-differentials}.
We prove our nonrealization theorems in section~\ref{sec:nonr-results}. 
They improve on the existing results in the literature. 

Finally there are two appendices. In appendix~\ref{sec:homol-free-algebr} 
the cohomology of a free algebra over a suitable $E_n$ operad in spectra is computed by reduction
to Cohen's unstable calculation~\cite{C.L.M.}. 
This result is needed in section~\ref{sec:nonr-results}. 
Appendix~\ref{sec:nishida-relations}
mainly consists of a proof for  
cohomological Nishida relations on the dual operations defined in
section~\ref{sec:operations}, again needed in section \ref{sec:nonr-results}.
   
\subsection{Notation and conventions} 

All (co)homology groups have coefficients in the 
prime field $\FF_p$, where $p$ is an odd prime.
The corresponding Eilenberg-MacLane spectrum
is denoted $\Hp$.
Unless stated otherwise, topological spaces have the homotopy type
of cell complexes.
From Section~\ref{sec:operations} on 
all spectra or topological spaces $X$ are bounded 
below with $H^\ast(X)$ of finite type, i.e., finite-dimensional
in each degree. The $n$-fold shift of a graded module $M$ is denoted
$\shift{n}M$ and satisfies $\bigl(\shift{n}M\bigr)^{k} = M^{k-n}$.
Tensor products are over $\FF_p$, unless stated otherwise.

\subsection{Acknowledgements}

We are grateful to Nick Kuhn for several discussions,
and for providing us with the Thom isomorphism argument 
which is central to the results in 
Appendix~\ref{sec:homol-free-algebr}.
We also thank Gerald Gaudens and Lionel Schwartz for 
sending us an early draft of \cite{Gaudens-Schwartz}, as
well as for stimulating discussions we had with them in 
Osnabr\"uck and Paris.

\section{The Arone-Goodwillie spectral sequence}
\label{sec:arone-goodw-spectr}

Goodwillie's calculus of homotopy functors, as developed in \cite{Go}, \cite{Go2} and \cite{Go3}, 
provides an interpolation between 
unstable and stable phenomena in homotopy theory. The layers of this
interpolation carry a surprisingly rich structure. 
The purpose of this section is to describe the interpolation in the
examples relevant to us, following work of Goodwillie, Johnson and
Arone.

Let $\pTop$ be a convenient category of pointed topological spaces (for example, compactly
generated spaces), and
let $\Spt$ be a convenient category of $S^1$-spectra in $\pTop$ (for example, orthogonal spectra).
Suppose $F\co \Cc \to \Spt$ is a functor preserving weak equivalences, 
where $\Cc$ is either the category $\pTop$ or $\Spt$. 
Goodwillie's calculus of functors provides a sequence of functors $P_nF\co \Cc\to \Spt$ 
and a natural tower of fibrations for any $X\in \Cc$:
\[\begin{xy}
   \xymatrix{
                                                                 & & \ar[d] \\
                                                                 & & P_{k+1}F(X)\ar[d]^{p_{k+1}F} \\
                                                                 & & P_{k}F(X) \ar[d]^{p_{k}F} \\
                                                                 & & \ar@{.}[d] \\
                                                                 & & \ar[d]^{p_1F} \\
       F(X) \ar[rr]^{f_0} \ar[rruuu]_{f_{k}} \ar[rruuuu]^{f_{k+1}} & & P_0F(X) \\
   }
\end{xy}\]
The transformation  $F\to P_kF$ is universal among $k$-excisive approximations of $F$.
The fiber $D_kF$ of $P_kF \xrightarrow{p_kF} P_{k-1}F$ is called the $k$-th differential of $F$
and may be thought of as the degree-$k$ homogeneous part of the polynomial functor $P_kF$.
Applying any cohomology functor $h^\ast$ to the Goodwillie tower produces an exact couple and
hence a spectral sequence $(E,d)$ with:
\begin{equation}\label{eq:goodwilliess}
  E_1^{-j,k} = \begin{cases}
    h^{k-j}(D_jF(X)) &  j \geq 0 \\
    0               &  j < 0
  \end{cases}
\end{equation}
This Goodwillie spectral sequence will be exploited for the functors
\[ \Sigma^\infty \Omega^n\co \pTop \to \Spt \quad \mathrm{and} \quad
   \Sigma^\infty\Omega^\infty \co \Spt \to \Spt\]
composed with $h^\ast = H^\ast(-;\FF_p)$.
Arone gave an explicit model of the Goodwillie tower for 
the functor 
\[ \Sigma^\infty \Map_\bullet(K,-)\co \pTop \rightarrow \Spt\]
with $K$ a finite (pointed) cell complex in \cite{Arone:snaith}. This model was used in \cite{A.K.} to describe
the $k$-th differential 
\[ D_{n,k} \co = D_k\Sigma^\infty \Omega^n \]
of the Goodwillie tower of 
$\Sigma^\infty \Omega^n = \Sigma^\infty\Map_\bullet(S^n,-)$
via the operad of little cubes introduced by 
Boardman and Vogt in \cite{Boardman-Vogt:invariant}. In order to present this description,
let $\Cc(n,k)$ be the space of $k$ disjoint little $n$-cubes in a bigger
$n$-cube (cf. \cite{May:geometry}), with the obvious action of the symmetric group $\Sigma_k$. 

\begin{Theorem}\label{fibDn} Let $1\leq n\in \mathbb{N}$.
\begin{enumerate}
   \item For any pointed topological space $X$, there is a natural weak equivalence of spectra
     \[ D_{n,k} (X) \simeq {\Cc(n,k)_+} \wedge_{h\Sigma_k} (\Sigma^{-n} \Sigma^\infty X)^{\wedge k}\]
   \item For any spectrum $X$, there is a natural weak equivalence of spectra
     \[ D_{\infty,k} (X) \simeq {\Cc(\infty,k)_+} \wedge_{h\Sigma_k} X^{\wedge k}\]
\end{enumerate}
\end{Theorem}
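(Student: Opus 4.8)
I would deduce Theorem~\ref{fibDn} from Goodwillie's classification of homogeneous functors together with known identifications of the relevant derivative spectra. Recall that a homogeneous degree-$k$ functor $\pTop\to\Spt$ is classified by a $\Sigma_k$-spectrum, and that for any $F$ as above $D_kF$ is such a functor; writing $\partial_kF$ for the corresponding $\Sigma_k$-spectrum one has
\[ D_kF(X)\;\simeq\;\bigl(\partial_kF\wedge(\Sigma^\infty X)^{\wedge k}\bigr)_{h\Sigma_k}, \]
with $(\Sigma^\infty X)^{\wedge k}$ replaced by $X^{\wedge k}$ when $F=\Sigma^\infty\Omega^\infty$ is taken on $\Spt$, and $(-)_{h\Sigma_k}=E\Sigma_{k+}\wedge_{\Sigma_k}(-)$. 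So the whole content is the computation of the $\Sigma_k$-spectra $\partial_kF$; after that the stated formulas follow by unwinding, using that $\Cc(n,k)$ and $\Cc(\infty,k)$ carry free $\Sigma_k$-actions (so their homotopy orbits agree with honest orbits) and that $\Cc(\infty,k)\simeq E\Sigma_k$ is moreover contractible.

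For part (2), the claim amounts to $\partial_k(\Sigma^\infty\Omega^\infty)\simeq\mathbb{S}$, the sphere spectrum with trivial $\Sigma_k$-action -- the ``commutative operad'' -- since then $\Cc(\infty,k)_+\wedge_{h\Sigma_k}X^{\wedge k}\simeq(X^{\wedge k})_{h\Sigma_k}=(\mathbb{S}\wedge X^{\wedge k})_{h\Sigma_k}\simeq D_kF(X)$. This identification is classical: it can be read off from a Snaith/tom Dieck splitting of $\Sigma^\infty\Omega^\infty$ on finite wedges -- the $k$-th cross-effect is $k$-homogeneous with multilinearization $\bigvee_{\sigma\in\Sigma_k}X_{\sigma(1)}\wedge\cdots\wedge X_{\sigma(k)}$, forcing $\partial_k\simeq\mathbb{S}$ with trivial action -- or simply quoted as the known fact that the Goodwillie tower of $\Sigma^\infty\Omega^\infty$ is the extended-power tower, with $k$-th layer $(X^{\wedge k})_{h\Sigma_k}$.

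For part (1), I would invoke Arone's explicit model for the Goodwillie tower of $\Sigma^\infty\Map_\bullet(K,-)$ from \cite{Arone:snaith}, specialised to $K=S^n$, which presents $\partial_k(\Sigma^\infty\Omega^n)$ as a function spectrum $\Map(L_k(n),\mathbb{S})$ out of a finite pointed $\Sigma_k$-CW complex $L_k(n)$ built from smash powers of $S^n$ over the partition poset. The remaining and, in my view, main step -- carried out by Ahearn and Kuhn in \cite{A.K.} -- is a $\Sigma_k$-equivariant Spanier--Whitehead duality identifying $\Map(L_k(n),\mathbb{S})$ with $S^{-nk}\wedge\Sigma^\infty\Cc(n,k)_+$: the ordered configuration space of $k$ points in $\R^n$ is $\Sigma_k$-equivariantly homotopy equivalent to $\Cc(n,k)$ and sits in $\R^{nk}$ as an open submanifold with $\Sigma_k$-equivariantly stably trivial normal bundle, so Atiyah duality yields a $\Sigma_k$-equivariant equivalence $\mathbb{D}(L_k(n))\simeq\Cc(n,k)_+\wedge S^{-nk}$, the $(-nk)$-shift being the equivariant $k$-fold desuspension appearing in the statement. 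Feeding $\partial_k(\Sigma^\infty\Omega^n)\simeq S^{-nk}\wedge\Sigma^\infty\Cc(n,k)_+$ into the classification gives
\[ D_{n,k}(X)\;\simeq\;\bigl(\Sigma^\infty\Cc(n,k)_+\wedge S^{-nk}\wedge(\Sigma^\infty X)^{\wedge k}\bigr)_{h\Sigma_k}\;\simeq\;\Cc(n,k)_+\wedge_{h\Sigma_k}(\Sigma^{-n}\Sigma^\infty X)^{\wedge k}. \]
As a sanity check both sides match the Snaith splitting $\Sigma^\infty C_nY\simeq\bigvee_k\Cc(n,k)_+\wedge_{\Sigma_k}Y^{\wedge k}$ via May's equivalence $\Omega^n\Sigma^nY\simeq C_nY$ for connected $Y$, where the tower degenerates; and the inclusion $\Cc(n,-)\hookrightarrow\Cc(\infty,-)$ corresponds on layers to the transformation induced by $\Omega^nX\to\Omega^n\Omega^\infty\Sigma^\infty X=\Omega^\infty(\Sigma^{-n}\Sigma^\infty X)$, reconciling the two parts.

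The hard part is not the bookkeeping with the homogeneous classification but the two geometric inputs I would import: Arone's comparison theorem, that his hands-on tower really is the Goodwillie tower functorially in \emph{all} pointed $X$ and not only those for which it splits; and the $\Sigma_k$-equivariant Atiyah duality of the previous paragraph, where one must carefully track the $\Sigma_k$-action and the $k$-fold $n$-dimensional suspension coordinate while identifying the partition complex $L_k(n)$ with the Thom spectrum of the (equivariantly trivial) normal bundle of the configuration space. Both are available in \cite{Arone:snaith} and \cite{A.K.}; accordingly I would cite them, the role of Theorem~\ref{fibDn} here being to fix the model of the layers $D_{n,k}$ that underlies the homology spectral sequence of this section.
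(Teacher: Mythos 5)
Your proposal is correct and follows essentially the same route as the paper: the paper's proof of Theorem~\ref{fibDn} consists solely of citing \cite[Eqn.~(1,2), Thm.~7.1, Cor.~1.3]{A.K.}, and your argument is an accurate expansion of what those references (together with \cite{Arone:snaith}) establish, namely Goodwillie's classification of homogeneous functors plus the $\Sigma_k$-equivariant duality identifying the relevant derivative spectra with desuspended little-cubes configuration spaces. You simply supply the details that the paper delegates to Arone and Ahearn--Kuhn.
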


\begin{proof}
  The first weak equivalence is stated as~\cite[Eqn.~(1,2)]{A.K.} and is obtained as
  a consequence of~\cite[Thm.~7.1]{A.K.}. The second weak equivalence is \cite[Cor.~1.3]{A.K.}.
\end{proof}

The special cases $D_{1,k}(X) \simeq X^{\wedge k}\!$, 
$D_{\infty,k}(X) \simeq X^{\wedge k}_{h\Sigma_k}$, 
$D_{n,1}(X) \simeq \Sigma^{-n}\Sigma^\infty X$ follow
from various properties of the spaces $\Cc(n,k)$. Furthermore, by convention
$D_{n,0}(X) = \mathbb S$
is the sphere spectrum.
The case $n=\infty$ is used for universal examples in upcoming proofs.
In order to simplify the exposition, it will sometimes
be included without explicit notational changes.

Note that convergence of the Goodwillie tower requires further assumptions on the variable $X$.
For example, in the first case of Theorem~\ref{fibDn}, convergence holds by \cite{Arone:snaith} on
$n$-connected pointed topological spaces, whereas in the second case, 
convergence holds on $0$-connected spectra~\cite[Cor.~1.3]{A.K.}.
The Arone-Goodwillie spectral sequence arising from $\Sigma^\infty \Omega^n$ 
in singular cohomology with $\FF_p$ coefficients will simply be referred to as 
'the spectral sequence for $n$', infinity included.

\begin{Theorem}\label{sseq}
  The spectral sequence for $n$ converges to $H^\ast(\Omega^n X)$ if $X$ is an $n$-connected pointed 
  cell complex ($n < \infty$) respectively a connected spectrum ($n=\infty$).
  Furthermore, it is a spectral sequence of $\mathcal{A}_p$-modules, where 
  $\mathcal{A}_p$ acts columnwise on each page and
  the $\mathcal{A}_p$-module structure on the $E_1$-page 
  is the obvious one.
  The spectral sequence converges to the 
  usual $\mathcal{A}_p$-module structure on $H^\ast(\Omega^n X)$.
\end{Theorem}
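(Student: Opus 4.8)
The plan is to extract all of the asserted structure from the functoriality of singular cohomology together with a connectivity estimate for the layers $D_{n,k}$. Applying $H^\ast(-;\FF_p)$ to the Goodwillie tower of $\Sigma^\infty\Omega^n$ yields the exact couple whose spectral sequence has $E_1$-term \eqref{eq:goodwilliess}, so the three tasks are: identify the abutment, verify strong convergence, and track the $\mathcal{A}_p$-action through the construction.

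\emph{Convergence.} If $X$ is $n$-connected then $\Sigma^{-n}\Sigma^\infty X$ is $0$-connected, hence $(\Sigma^{-n}\Sigma^\infty X)^{\wedge k}$ is $(k-1)$-connected; since passing to homotopy orbits and smashing with $\Cc(n,k)_+$ cannot lower connectivity, Theorem~\ref{fibDn}(1) shows that $D_{n,k}(X)$ is $(k-1)$-connected (for $n=\infty$ and $X$ a connected spectrum one argues identically from Theorem~\ref{fibDn}(2)). Therefore $E_1^{-j,k}=H^{k-j}(D_{n,j}X)$ vanishes whenever $k-j<j$, so in each total degree only finitely many columns are nonzero; the filtration of the abutment is then finite in each degree, the spectral sequence converges strongly, and no $\lim^1$ term intervenes. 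The same estimate makes $\operatorname{holim}_k P_kF(X)\to P_kF(X)$ highly connected, so the abutment is $H^\ast(\operatorname{holim}_k P_kF(X))$; by convergence of the Goodwillie tower itself --- \cite{Arone:snaith} for $n$-connected $X$, \cite[Cor.~1.3]{A.K.} for connected spectra --- this homotopy limit is $\Sigma^\infty\Omega^n X$, so the abutment is $H^\ast(\Sigma^\infty\Omega^n X)=\widetilde H^\ast(\Omega^n X)$.

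\emph{The $\mathcal{A}_p$-module structure.} Every map in the Goodwillie tower, and every connecting homomorphism of the resulting long exact cohomology sequences, is induced by a map of spectra, so $H^\ast(-;\FF_p)$ turns the exact couple into one of $\mathcal{A}_p$-modules with $\mathcal{A}_p$-linear structure maps. By induction on $r$ each page $E_r$ is then a bigraded $\mathcal{A}_p$-module and each $d_r$ is $\mathcal{A}_p$-linear, the structure on $E_{r+1}$ being the induced one on homology; since the Steenrod action preserves $H^\ast(D_{n,j}X)$ for each fixed $j$ it does not change the column index, i.e.\ it is columnwise, and on $E_1^{-j,\ast}=H^\ast(D_{n,j}X)$ it is simply the Steenrod action of the $j$-th layer transported along the natural equivalence of Theorem~\ref{fibDn}, which is ``the obvious one''. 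For the final clause, the increasing filtration of $\widetilde H^\ast(\Omega^n X)$ by the images of the $\mathcal{A}_p$-linear maps $H^\ast(P_jF(X))\to\widetilde H^\ast(\Omega^n X)$ is by $\mathcal{A}_p$-submodules, and the canonical identifications of its subquotients with the groups $E_\infty^{-j,\ast}$ are induced by maps of spectra, hence are isomorphisms of $\mathcal{A}_p$-modules; this is exactly the assertion that the spectral sequence converges to the usual $\mathcal{A}_p$-module structure on $\widetilde H^\ast(\Omega^n X)$.

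The one genuinely substantive ingredient is the convergence of the Goodwillie tower on the stated classes of inputs --- Arone's theorem and its spectrum-level analogue --- together with the connectivity bookkeeping for the homotopy-orbit layers; once these are in hand, every statement about the $\mathcal{A}_p$-action is a formal consequence of the naturality of singular cohomology and of the equivalences of Theorem~\ref{fibDn}, so I anticipate no serious obstacle there.
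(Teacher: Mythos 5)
The paper states Theorem~\ref{sseq} without giving a proof, deferring to the convergence results of \cite{Arone:snaith} and \cite[Cor.~1.3]{A.K.} quoted just before it; your argument --- the connectivity estimate on the layers forcing only finitely many columns in each total degree, strong convergence to the cohomology of the homotopy limit, and naturality of $H^\ast(-;\FF_p)$ for the columnwise $\mathcal{A}_p$-structure --- is precisely the standard justification the authors leave implicit, and it is correct. The only cosmetic point is that with the paper's convention $D_{n,0}(X)=\mathbb S$ the abutment is the unreduced $H^\ast(\Omega^n X)$ rather than $\widetilde H^\ast(\Omega^n X)$, the $0$-column supplying the extra copy of $\FF_p$ in degree $0$.
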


These spectral sequences are tied together as 
$n$ varies. More precisely, there are natural maps
\begin{equation}\label{eq:evaluation}
\epsilon_{r,n}\co \Sigma^{r} \Sigma^\infty \Omega^{n+r} X \rightarrow \Sigma^\infty \Omega^n X
\end{equation}
induced by the evaluation $\Sigma \Omega X \rightarrow X$ for any pointed topological space $X$,
as well as natural equivalences
\begin{equation}\label{eq:suspension}
\varphi_n\co \Sigma^\infty \Omega^n \Omega^\infty \Sigma^n X \rightarrow \Sigma^\infty \Omega^\infty X\end{equation}
for any spectrum $X$. These maps induce maps on the Goodwillie towers, hence, 
of the associated spectral sequences. 
In particular, they induce maps on the differentials, which will be of use later on.

\section{Operations}
\label{sec:operations}

In this section,  following Kuhn's treatment for the case $p=2$, we 
define and study operations in the cohomology of
$\mathcal{C}_n$-extended powers $D_{n,j}X$ in the category of spectra 
which are
closely related to the Browder operations,
Dyer-Lashof operations and Pontryagin products in homology. 
The standard references for these homology operations
are \cite{C.L.M.} and \cite{B.M.M.S.}, with some corrections to be found in 
\cite{Wellington}. We refer to these sources for the
properties of and relations between the homology operations.
To be more precise, the reindexed operations $Q^r$ are studied in loc.cit.,
but the operations which appear as $Q_r$ or $P_r$ in the literature are more convenient
in our setup. We employ the Dyer-Lashof notation $Q_{(n-1)(p-1)},Q_{(n-1)(p-1)-1}$ 
for the two top operations denoted by $\xi$ and $\zeta$ in  \cite{C.L.M.}.
Of course one should keep in mind that $\xi$ is the single nonlinear operation
among these. Furthermore, the 
Bockstein of $\xi$ is $\zeta$ only up to a 
term consisting of a sum of $p$-fold Browder operations if $n>1$. 

\subsection{Definitions}
\label{sec:definitions}

We have to fix notation for the structure maps which show up 
in the definitions of the operations.

\begin{Definition}\label{def:operation-maps}
  In the following, $D_{n,k}(X)$ is equipped with the $\Cc_n$-operad action via the 
  weak equivalence given in Theorem~\ref{fibDn}.
  \begin{enumerate}
  \item\label{item:epsilon} Let \[\epsilon\co
    \Sigma^{r}D_{n+r,j}X \rightarrow D_{n,j}\Sigma^{r}X\] denote the map induced by the
    evaluation map $\Sigma^{r}\Sigma^{\infty}\Omega^{n+r}X \rightarrow \Sigma^{\infty}\Omega^{n}X$
    from~(\ref{eq:evaluation}).
  \item\label{item:mu} Let \[\mu\co D_{n,i}X \wedge D_{n,j}X \rightarrow D_{n,i+j}X\]
    be the map induced by the operad action $\theta$ of some fixed element $c_2\in  \mathcal{C}_{n,2}$ 
    \[\theta (c_2 ,-,-)\co\mathcal{C}_{n,i}\times\mathcal{C}_{n,j} \longrightarrow\mathcal{C}_{n,i+j}\]
    (see \cite[Chapter VII]{L.M.S.} for details). 
  \item\label{item:t} Let
    \[\tau\co D_{n,i+j}X \rightarrow D_{n,i}X \wedge D_{n,j}X\]
    denote the composition of the maps
    \[(\mathcal{C}(n,i+j)_{+}\wedge X^{\wedge i+j})_{h\Sigma_{i+j}} 
    \rightarrow (\mathcal{C}(n,i+j)_{+}\wedge X^{\wedge i+j})_{h\Sigma_{i}\times \Sigma_{j}}\]
    and
    \[ (\mathcal{C}(n,i+j)_{+}\wedge X^{\wedge i+j})_{h\Sigma_{i}\times \Sigma_{j}} 
    \rightarrow (\mathcal{C}(n,i)_{+} \wedge \mathcal{C}(n,j)_{+} \wedge 
    X^{\wedge i+j})_{h\Sigma_{i}\times \Sigma_{j}}\]
    of homotopy orbits, where the first map is the transfer associated to 
    $\Sigma_{i}\times \Sigma_{j} \subset \Sigma_{i+j}$ and the second map is 
    induced by the $\Sigma_{i} \times \Sigma_{j}$-equivariant inclusion of spaces 
    $\mathcal{C}(n,i+j)\subset \mathcal{C}(n,i)\times \mathcal{C}(n,j)$.
  \item\label{item:omega} 
    Let \[\omega\co D_{n,pj}X \rightarrow D_{\infty,p}D_{n,j}X\] denote the composition of maps
    \[ (\mathcal{C}(n,pj)_{+}\wedge X^{\wedge pj})_{h\Sigma_{pj}} \rightarrow (\mathcal{C}(n,pj)_{+}\wedge X^{\wedge pj})_{h\Sigma_{p} \wr \Sigma_{j}}\]
    and
    \[(\mathcal{C}(n,pj)_{+}\wedge X^{\wedge pj})_{h(\Sigma_{p} \wr \Sigma_{j})} \rightarrow (\mathcal{C}(n,j)_{+}^{ p}  \wedge X^{\wedge pj})_{h(\Sigma_{p} \wr \Sigma_{j})}\]
    of homotopy orbits, where the first map is the transfer associated to 
    $\Sigma_{p} \wr \Sigma_{j} \subset \Sigma_{pj}$ and the second map is induced by the 
    $\Sigma_{p} \wr \Sigma_{j}$-equivariant inclusion of spaces 
    $\mathcal{C}(n,pj)\subset \mathcal{C}(n,j)^{p}$.
  \end{enumerate}
\end{Definition}

In the following,  the notion of naturality has to be understood in the context of $E_n$-spectra, 
or more generally of $H_n$-spectra. Note that the corresponding definition in the case $p=2$
is less involved, as then $0=p-2$.

\begin{Definition}\label{defq}
Let $\infty\geq n \geq 1$ and $n \geq r\geq 0$ be given. 
Let $\iota \in H_{d}(\Sigma^{d}\Hp)$ be the fundamental class given by the 
inclusion of the bottom cell.
The natural operation
\[\dlc^{r}\co H^{d}(D_{n,j}X) \rightarrow H^{pd+s}(D_{n,pj}X)\]
is defined as follows.
\begin{description}
\item[$\dlc^0$:]
  For $d$ even and $x\in H^{d}(X)$, let $\dlc^{0} (x) \in H^{pd}(D_{\infty,p}X)$ be the composition
  \[D_{\infty,p}X \xrightarrow{D_{\infty,p}(x)} 
  D_{\infty,p}\Sigma^{d}\Hp \xrightarrow{\hat{\iota}_{0} } \Sigma^{pd}\Hp \]
  where $\hat{\iota}_{0}$ represents the dual of $Q_{0}(\iota) \in H_{pd}(D_{\infty,p}\Sigma^{d}\Hp)$, i.e. the Dyer-Lashof operation evaluated at the 
  fundamental class.
  For $d$ even and $x \in H^{d}(D_{n,j} X)$, let $\dlc^{0}(x) \in H^{pd}(D_{n,pj}X)$ 
  be the composition
  \[D_{n,pj}X \xrightarrow{\omega}D_{\infty,p}D_{n,j}X \xrightarrow{\dlc^{0} (x)} \Sigma^{pd}\Hp\]
  with $\omega$ introduced in Definition~\ref{def:operation-maps}, part~\ref{item:omega}.
  For $x\in H^d(D_{n,j}X)$ with $d$ odd, $\dlc^0(x)$ is defined to be zero.
\item[$\dlc^{p-2}$:]
  For $x\in H^{d}(X)$ with $d$ odd, let 
  $\dlc^{p-2} (x) \in H^{pd+(p-2)}(D_{\infty,p}X)$ be the composition
  \[ D_{\infty,p}X \xrightarrow{D_{\infty,p}(x)} D_{\infty,p}\Sigma^{d}\Hp 
  \xrightarrow{\hat{\iota}_{p-2}} \Sigma^{pd+(p-2)}\Hp\]
  where $\hat{\iota}_{p-2}$ represents the dual of the evaluation of the
  Dyer-Lashof operation 
  $Q_{p-2}(\iota) \in H_{pd+(p-2)}(D_{\infty,p}\Sigma^{d}\Hp)$
  at the fundamental class.
  For $x \in H^{d}(D_{n,j} X)$ with $d$ odd, let $\dlc^{p-2}(x) \in H^{pd+(p-2)}(D_{n,pj}X)$ 
  be the composition
  \[ D_{n,pj}X \xrightarrow{\omega}D_{\infty,p}D_{n,j}X \xrightarrow{\dlc^{p-2} (x)} 
  \Sigma^{pd+(p-2)}\Hp\]
  with $\omega$ introduced in Definition~\ref{def:operation-maps}, part~\ref{item:omega}.
  For $x\in H^d(D_{n,j}X)$ with $d$ even, $\dlc^0(x)$ is defined to be zero.
\item[$\dlc^{s(p-1)}$:] Let $0<s \leq n-1$ and $x\in H^d(D_{n,j}X)$, with $d+s$ even.
  Define $\dlc^{s(p-1)}(x)\in H^{pd+s(p-1)}(D_{n,pj}X)$ to be the composition
  \[D_{n,pj}X \xrightarrow{\epsilon} \Sigma^{-s} D_{n-s,pj}\Sigma^{s} X 
  \xrightarrow{\Sigma^{-s}\dlc^{0}(\Sigma^{s}x)} \Sigma^{pd+s(p-1)}\Hp\]
  with $\epsilon$ introduced in Definition~\ref{def:operation-maps}, part~\ref{item:epsilon}.
  For $x\in H^d(D_{n,j}X)$ with $d+s$ odd, $\dlc^0(x)$ is defined to be zero.
\item[$\dlc^{s(p-1)-1}$:] Let $0<s \leq n-1$ and $x\in H^d(D_{n,j}X)$, with $d+s$ even.
  Define $\dlc^{s(p-1)-1}(x)\in H^{pd+r(p-1)-1}(D_{n,pj}X)$ to be the composition
  \[ D_{n,pj}X \xrightarrow{\epsilon} \Sigma^{1-s} D_{n+1-s,pj}\Sigma^{(s-1)} X 
  \xrightarrow{\Sigma^{1-s }\dlc^{p-2}(\Sigma^{(s-1)}x)} \Sigma^{pd+s(p-1)-1}\Hp\]
  with $\epsilon$ introduced in Definition~\ref{def:operation-maps}, part~\ref{item:epsilon}.
  For $x\in H^d(D_{n,j}X)$ with $d+s$ odd, $\dlc^0(x)$ is defined to be zero.
\item[$\dlc^r$:]
  All other $\dlc^{r}$ are defined to be zero.
\end{description}
\end{Definition}

Thus the only (possibly) nonzero dual Dyer-Lashof operations are: 
\[\dlc^0,\dlc^{p-2},\dlc^{p-1},
\dlc^{2p-3},\dlc^{2p-2},\dotsc,\dlc^{(n-1)(p-1)}\] 
The dual Browder operations are
obtained as follows.

\begin{Definition}\label{defl}
  Let  $d=\sum_{i=1}^j d_{i}$. The dual Browder operation is the homomorphism
  \[ \brc^{n-1}\co H^{d_{1}}(X) \otimes \cdots \otimes H^{d_{j}}(X) 
  \rightarrow H^{d+(j-1)(n-1)}(D_{n,j}X)\]
  induced by the map 
  \[ \epsilon^{n-1} \co D_{n,j} X \rightarrow \Sigma^{1-n}D_{1,j}\Sigma^{n-1}X=
  \Sigma^{(j-1)(n-1)}X^{\wedge j} \]
  with $\epsilon$ introduced in Definition~\ref{def:operation-maps}, part~\ref{item:epsilon}.
\end{Definition}

Note that $\brc^0$ coincides with the K\"unneth isomorphism.
Our final definition in this section describes a ``product'' resp. a ``coproduct'' structure
on $\bigoplus_k H^\ast(D_{n,k}X)$.

\begin{Definition}\label{def:pairing-copairing}
  The map $t$ introduced in Definition~\ref{def:operation-maps}, part~\ref{item:t}
  induces a pairing:
  \[ \star\co H^{*}(D_{n,i}X) \otimes H^{*}(D_{n,j}X) \rightarrow H^{*}(D_{n,i+j}X)\]
  The map $\mu$ introduced in Definition~\ref{def:operation-maps}, part~\ref{item:mu}
  induces a copairing:
  \[ \Psi\co H^{*}(D_{n,i+j}X)  \rightarrow H^{*}(D_{n,i}X) \otimes H^{*}(D_{n,j}X)\]
\end{Definition}

\subsection{Properties}
\label{sec:properties}

This section summarizes some properties of the operations and pairings defined in
Section~\ref{sec:definitions}.

\begin{Proposition}\label{epsilonCO}
  The map $\epsilon^\ast\co  H^\ast(D_{n,j}\Sigma X) \rightarrow \shift{1} H^\ast(D_{n+1,j} X)$
  commutes with the dual Dyer-Lashof and dual Browder operations. More precisely,
  the following equalities hold:
  \begin{align*}
    \epsilon^\ast(\dlc^r(\sigma x)) & = \dlc^{r+p-1}(x) \\
    \epsilon^\ast(\brc^{n-1}(\otimes \sigma(x_i)) &= \brc^n(\otimes x_i)
  \end{align*}
\end{Proposition}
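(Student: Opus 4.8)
The plan is to reduce the statement to the definitions in Section~\ref{sec:definitions} by unwinding $\epsilon^\ast$ against each dual operation, using only the evident functoriality of $\epsilon$ and the fact that the structure maps $\epsilon$ appearing in Definition~\ref{def:operation-maps}, part~\ref{item:epsilon} compose in the expected way. First I would record the key compatibility: the maps $\epsilon\co \Sigma^r D_{n+r,j}X \to D_{n,j}\Sigma^r X$ from Definition~\ref{def:operation-maps} are induced by the evaluation transformations~(\ref{eq:evaluation}), and these evaluations satisfy an obvious cocycle/associativity relation, so that for any $s$ the composite $\Sigma^{r}D_{n+r,j}\Sigma^s X \to D_{n,j}\Sigma^{r+s}X$ built by iterating $\epsilon$ agrees (up to the canonical suspension identifications) with the single map $\epsilon$ for the parameters $(n,n+r)$. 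This is the one genuinely geometric input, and it follows from Arone's model together with the naturality assertions in Theorem~\ref{fibDn} and the discussion around~(\ref{eq:evaluation}); I would isolate it as a short lemma before attacking the two displayed formulas.

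Granting that, the first formula $\epsilon^\ast(\dlc^r(\sigma x)) = \dlc^{r+p-1}(x)$ is then a direct diagram chase. For $x\in H^d(D_{n+1,j}X)$ with $\sigma x\in H^{d+1}(D_{n,j}\Sigma X)$, the definition of $\dlc^{r}$ in the cases $r=0$ and $r=p-2$ (and hence, via the $\epsilon$-desuspension clauses, in the cases $r=s(p-1)$ and $r=s(p-1)-1$) writes $\dlc^r(\sigma x)$ as a composite through some $\Sigma^{-s}D_{n-s,pj}\Sigma^s(\Sigma X)$ followed by a top Dyer--Lashof class. Precomposing with $\omega$ and $\epsilon$ and invoking the naturality of $\omega$ and $\epsilon$ with respect to the map $x$, together with the cocycle lemma, identifies this with the composite defining $\dlc^{r+p-1}(x)$ on $D_{n+1,pj}X$: the extra suspension bumps the ambient little-cubes parameter from $n-s$ to $(n+1)-s$, which is exactly the shift sending the index $s(p-1)$ (or $s(p-1)-1$, or the base cases $0,p-2$) to the next allowed value, i.e.\ adds $p-1$. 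One also has to check the parity bookkeeping — $d$ versus $d+1$ toggles the even/odd condition — and see that it matches, so that a nonzero operation maps to a nonzero operation and the zero cases on both sides coincide; this is routine but should be written out once for each of the four defining clauses of $\dlc$.

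The second formula $\epsilon^\ast(\brc^{n-1}(\otimes\sigma x_i)) = \brc^n(\otimes x_i)$ is the same kind of argument but simpler, since $\brc^{n-1}$ is by Definition~\ref{defl} induced by the single map $\epsilon^{n-1}\co D_{n,j}X \to \Sigma^{(j-1)(n-1)}X^{\wedge j}$. Here I would just observe that $\epsilon^{n-1}$ for parameter $n$, precomposed with the structure map $\epsilon\co \Sigma D_{n+1,j}X \to D_{n,j}\Sigma X$, is — again by the cocycle lemma — the iterate $\epsilon^{n}$ for parameter $n+1$ (up to the identification $\Sigma(\Sigma X)^{\wedge j}$-shifts), and then take cohomology and compare with the Künneth identification, noting $\brc^0$ is the Künneth isomorphism. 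The main obstacle is really just the first paragraph: pinning down the precise form of the associativity/cocycle identity for the evaluation-induced maps $\epsilon$ at the point-set (or at least the homotopy-category) level, so that the subsequent chases are unambiguous; once that is in hand, everything else is naturality of $\omega$, $\epsilon$ and the homotopy-orbit/transfer constructions, plus parity bookkeeping.
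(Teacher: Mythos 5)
Your proposal is correct and follows essentially the same route as the paper, which simply observes that both identities are direct consequences of Definitions~\ref{defq} and~\ref{defl}: the dual Dyer--Lashof operations $\dlc^{s(p-1)}$ and $\dlc^{s(p-1)-1}$ and the dual Browder operation $\brc^{n-1}$ are \emph{defined} by iterating the evaluation maps $\epsilon$, so precomposing with one more $\epsilon$ shifts the index by $p-1$ (respectively raises $n-1$ to $n$) by construction. Your elaboration of the cocycle identity for the evaluation maps and the parity bookkeeping is exactly the content the paper leaves implicit.
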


\begin{proof}
  This is a direct consequence of Definitions \ref{defq} and \ref{defl}.
\end{proof}

Fix $1\leq n\in \N$.
In order to recall some properties of the Dyer-Lashof operations 
\[ Q_r\co H_\ast(D_{n,j} X) \rightarrow H_\ast(D_{n,pj} X), \]
the Browder operations 
\[L_{n-1}\co H_\ast(D_{n,i} X) \otimes H_\ast(D_{n,j} X) \rightarrow H_*(D_{n,i+j}X)\]
and the Pontryagin product  
\[\ast\co H_\ast(D_{n,i} X)\otimes H_\ast(D_{n,j} X)\to H_\ast(D_{n,i+j} X)\]
from the literature, let  $q\in \mathbb{N}$ and 
$r=t(p-1)+k$ with $0\leq k < p-1$. Set
\begin{subequations}
  \begin{align}
    \gamma(q,r)&=\prod_{i=0}^{t-1}(-1)^{\tfrac{(p-1)(q+i)}{2}}\Bigl(\bigl(\tfrac{p-1}{2}\bigr)!\Bigr)^{t} \label{eq:gamma}\\
    \nu(q)&= (-1)^{\tfrac{(p-1)q}{2}}\bigl(\tfrac{p-1}{2}\bigr)!\label{eq:nu}\\
    \lambda(q) &= \prod_{i=0}^{n-2}\nu(q+i)\label{eq:lambda}.
  \end{align}
\end{subequations}
The (co)homological degree of a homogeneous (co)homology class $y$ 
is denoted $\lvert y \rvert$.

\begin{Proposition}\label{epsilonHO}
  The map 
  $\epsilon_\ast\co \shift{1} H_\ast(D_{n+1,j} X) \rightarrow H_\ast(D_{n,j} \Sigma X)$
  satisfies:
  \begin{enumerate}
  \item\label{item:eps-q} ${\epsilon}_\ast\bigl(\sigma Q_{r}(y)\bigr) = 
    \nu(\lvert y\rvert ) Q_{r-(p-1)}\bigl(\sigma(y)\bigr)$. 
    Here $Q_i$ is understood to be trivial for negative $i$.  
  \item\label{item:eps-l} For all $x\in H_\ast(D_{n,i}X)$ and all $y\in H_\ast(D_{n,i}X)$ 
    one has ${\epsilon}_\ast\bigl(\sigma L_{n}(x , y)\bigr) = 
    L_{n-1}(\sigma x , \sigma y)$.
  \item\label{item:eps-prod} All Pontryagin products of 
    elements in positive degrees  are in the kernel of ${\epsilon}_\ast$.  
  \end{enumerate}
\end{Proposition}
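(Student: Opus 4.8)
The plan is to match the three identities with the classical behaviour of the homology suspension on iterated loop spaces, as recorded in \cite{C.L.M.} and \cite{B.M.M.S.} (with the sign and scalar conventions of \cite{Wellington}), after reducing to a universal example; this follows the scheme of Kuhn's treatment at $p=2$ in \cite{Kuhn:nonrealization}.

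First I would set up the reduction. The operations $Q_r$, $L_{n-1}$, the Pontryagin product $\ast$, and the map $\epsilon_\ast$ are all induced by natural transformations of functors of the input $X$, and for each fixed weight the functor $X\mapsto H_\ast(D_{n,j}X)$ is polynomial of degree $j$ in $\widetilde H_\ast(X)$; hence an identity between two such natural transformations may be verified on spheres $X=S^m$ with $m$ large, the single non-additive Dyer-Lashof operation being handled through its Cartan formula together with parts~\ref{item:eps-l} and~\ref{item:eps-prod}. For such $X$ one has $S^m=\Sigma^{n}S^{m-n}$, so by the Snaith splitting underlying Theorem~\ref{fibDn} (see \cite{Arone:snaith}, \cite{A.K.}) the Arone--Goodwillie tower splits, $\bigoplus_k H_\ast(D_{n,k}S^m)\iso\widetilde H_\ast(\Omega^{n}S^{m})$ carries the full Dyer-Lashof, Browder and Pontryagin structure of \cite{C.L.M.}, and the evaluation-induced map on layers becomes, up to the shift recorded by $\shift{1}$ and a Freudenthal comparison for the input, the homology suspension $\sigma_\ast$ of the path-loop fibration of an iterated loop space of a sphere. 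The real content of this step is that, after the splitting, the structure maps $\omega$, $\mu$, $\tau$ and the iterated evaluations $\epsilon^{n-1}$ of Definitions~\ref{def:operation-maps}--\ref{defl} correspond to the operad-theoretic maps defining $Q_r$, $\ast$ and $L_{n-1}$ in \cite{C.L.M.}.

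Granting the reduction, part~\ref{item:eps-q} is the classical suspension formula for Dyer-Lashof operations: in the reindexed operations $\sigma_\ast$ commutes with the operations with no scalar and no change of index, and rewriting in the unreindexed $Q_r$ used here produces the index shift and the coefficient. The shift is forced by a degree count: $Q_r(y)$ lies in degree $p\lvert y\rvert+r$ while $\sigma_\ast$ raises degree by $1$, so $\sigma_\ast Q_r(y)$ and $Q_{r'}(\sigma_\ast y)$ agree in degree only for $r'=r-(p-1)$. Comparing the two sides on the fundamental class $\iota$ identifies the coefficient with $\nu(\lvert y\rvert)$ of \eqref{eq:nu}, the only remaining freedom being the Koszul sign incurred when the odd-degree suspension class is moved past $y$ inside the $p$-fold extended power together with the normalisation of the top operations; and when $r-(p-1)<0$ the class $Q_r(y)$ lies in the range where it is a $p$-th power, which is killed by $\sigma_\ast$, matching the convention $Q_{<0}=0$. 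Part~\ref{item:eps-l} is more direct: the Browder operation $L_{n-1}$ is itself defined through the evaluation maps (Definition~\ref{defl} uses $\epsilon^{n-1}$ and the K\"unneth isomorphism $\brc^0$), so the identity reduces to the functoriality $\epsilon^{n}=\epsilon^{n-1}\circ\epsilon$ of iterated evaluations, exactly as in Proposition~\ref{epsilonCO}, and no scalar appears since $L_{n-1}$ is bilinear and involves no $p$-th powers. Part~\ref{item:eps-prod} is likewise direct: the product $\ast$ arises from $\mu$, and the homology suspension of a path-loop fibration annihilates every product of positive-degree classes (the transgression vanishes on decomposables, e.g.\ by the multiplicative structure of the Serre spectral sequence), which is exactly the assertion.

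The main obstacle is the comparison in the second paragraph: verifying that the evaluation-induced map on Goodwillie layers is the loop-space homology suspension \emph{compatibly with all of the operadic structure maps} $\omega$, $\mu$, $\tau$, $\epsilon^{n-1}$ — the transfer and wreath-product ingredients of $\omega$ and $\tau$, and the interplay of the $\shift{1}$-shift with the odd degree of the suspension class, have to be tracked carefully — together with pinning down the coefficient $\nu(\lvert y\rvert)$ in part~\ref{item:eps-q}, where the sign convention for odd-degree classes in the $p$-fold extended power is the delicate point (cf.\ the corrections in \cite{Wellington}).
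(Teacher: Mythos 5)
Your overall strategy --- reduce to the space-level results of \cite{C.L.M.} and quote the classical behaviour of the homology suspension on the operations --- is the same as the paper's, and your discussion of the index shift and coefficient in part~\ref{item:eps-q}, of part~\ref{item:eps-l} via functoriality of the iterated evaluations, and of part~\ref{item:eps-prod} via the vanishing of the transgression on decomposables is all sound. The gap is in your reduction mechanism. You propose to verify the identities on spheres $X=S^m$ and extend by naturality, justified by polynomiality of $X\mapsto H_\ast(D_{n,j}X)$. This does not work as stated: a mod~$p$ homology class $y\in H_\ast(D_{n+1,j}X)$ need not lie in the image of $H_\ast(D_{n+1,j}W)$ for any map $W\to X$ from a wedge of spheres, since already a class of $H_\ast(X)$ itself need not be spherical (for instance the top class of $\Sigma^\infty\mathbb{CP}^2$ at $p=2$ is not in the mod~$2$ Hurewicz image). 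So naturality in $X$ does not reduce the general case to spheres, and polynomiality of the functor in $\widetilde H_\ast(X)$ does not by itself supply naturality with respect to arbitrary linear maps of graded vector spaces. The paper instead reduces an arbitrary desuspended suspension spectrum $\Sigma^{-m}\Sigma^\infty X$ to the space level by the Thom isomorphism $\Phi$ of Appendix~\ref{sec:homol-free-algebr} (Lemma~\ref{lem:thom-iso-formula} and Corollary~\ref{cor:thom-iso-everything}), which rests on the chain-level identification $C_\ast(\ep_k E)\cong C_\ast(\bar{\Cc}_k)\otimes_{\Sigma_k}C_\ast(E)^{\otimes k}$ and therefore applies to every $X$, not only to spheres; some version of this input is unavoidable.

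A second, smaller omission: the classical reference \cite[III.1.4]{C.L.M.} that you are implicitly invoking for part~\ref{item:eps-q} does not cover the top operation $\zeta$ on the source. That case must be treated separately, using the relation $\zeta(x)=\beta\xi(x)-\mathrm{ad}^{p-1}(x)\bigl(\beta(x)\bigr)$ together with parts~\ref{item:eps-l} and~\ref{item:eps-prod}; your passing remark about ``the normalisation of the top operations'' does not address this.
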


\begin{proof} 
  The sources quoted above consider the operations in the homology of loop spaces,
  not $\Cc_n$-spectra.
  As explained in Appendix~\ref{sec:homol-free-algebr},
  one can reduce to this situation  
  via a suitable Thom isomorphism.
  For parts~\ref{item:eps-q} and~\ref{item:eps-l} 
  see \cite[III.1.4.]{C.L.M.}, which discusses all 
  cases except $Q_{(n-1)(p-1)}=\zeta$. 
  The remaining case is treated via the equality 
  $\zeta (x) = \beta\xi (x) - ad^{p-1}(x)\bigl(\beta (x)\bigr)$,
  where $\beta\in \mathcal{A}_p$ is the Bockstein. 
  The assertion in part~\ref{item:eps-prod} is classical and can be found for example 
  in \cite{Whitehead:elements}.
\end{proof}

\begin{Proposition} \label{tStern}
  The following hold for $\tau_\ast\co H_\ast(D_{n,k} X) \rightarrow H_\ast(X)^{\otimes k}$:
  \begin{subequations}
    \begin{align}
      \tau_\ast(Q_{r}(y)) &= 0 \label{eq:tq}\\
      \tau_\ast(\ast_{i=1}^k y_i ) &= \sum_{\sigma \in \Sigma_k} \otimes_{i=1}^k y_{\sigma(i)}
        \label{eq:tp}\\
      \tau_\ast(L_{n-1}(y,z)) &= 0 \label{eq:tl}
    \end{align}
  \end{subequations}
\end{Proposition}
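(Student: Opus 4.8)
The plan is to analyze the map $\tau_\ast$ by unwinding the definition of the structure map $\tau$ from Definition~\ref{def:operation-maps}, part~\ref{item:t}, and tracing through how each of the three homology operations is built. Recall that $\tau$ is the composite of a transfer map associated to $\Sigma_i \times \Sigma_j \subset \Sigma_{i+j}$ (with $i+j$ replaced by $k$ and the factors being, say, $1$ and $k-1$, or more generally any decomposition $k = \sum k_\ell$ compatible with the tensor factors) followed by a map induced by the $\Sigma_i\times\Sigma_j$-equivariant inclusion of little-cube spaces. First I would reduce, as in the proof of Proposition~\ref{epsilonHO}, to the classical situation of the homology of iterated loop spaces via the Thom isomorphism described in Appendix~\ref{sec:homol-free-algebr}; under this reduction $\tau_\ast$ becomes (up to the Thom iso) the standard coproduct/restriction map on the homology of free $E_n$-algebras, or equivalently the map induced by restricting the extended power construction along the inclusion of a Young subgroup.

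The key computational input is the behaviour of transfers on Dyer-Lashof operations, Browder operations, and Pontryagin products. For \eqref{eq:tq}: the class $Q_r(y)$ lives in the image of the $\Sigma_p$-extended power (via the operad structure map $\omega$ of Definition~\ref{def:operation-maps}, part~\ref{item:omega}), and the transfer from $\Sigma_p$ down to the trivial subgroup (or more precisely to a subgroup not containing a $p$-cycle) kills it, because the composite ``restriction after transfer'' is multiplication by the index, which is divisible by $p$ — this is the classical vanishing of Dyer-Lashof operations under the diagonal/coproduct, see \cite[III.1]{C.L.M.}. For \eqref{eq:tl}: the Browder operation $L_{n-1}(y,z)$ is detected on $\Cc(n,2)_+ \wedge_{h\Sigma_2} (\cdots)$ and arises from the nontrivial $\Sigma_2$-action; the transfer to the trivial subgroup again annihilates it since it sits in the ``reduced'' part coming from the $\Sigma_2$-Borel construction — concretely, $L_{n-1}$ maps to zero under the map forgetting the group action, which is exactly what the second (inclusion-induced) map in $\tau$ records after the transfer. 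For \eqref{eq:tp}: the Pontryagin product $\ast_{i=1}^k y_i$ is, by definition, the image under the operad structure map of $y_1 \otimes \cdots \otimes y_k$ along $\Cc(n,k) \to \Cc(n,1)^k = \ast$, and applying the transfer for $\Sigma_k \supset \{e\}$ to an element already pulled back from the split situation yields the sum over all of $\Sigma_k$ of the permuted tensors — this is the standard ``transfer of a restricted class is the norm'' computation, giving the orbit sum $\sum_{\sigma\in\Sigma_k}\otimes_{i=1}^k y_{\sigma(i)}$.

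More carefully, I would organize the argument around the Nishida-type philosophy: the composite $\tau \circ (\text{operad structure maps})$ can be identified with transfers between homotopy orbit spectra, and then invoke the double-coset formula (or its homological shadow). Since the three homology operations $Q_r$, $\ast$, and $L_{n-1}$ are precisely the three ``generators'' of $H_\ast(D_{n,k}X)$ over $H_\ast(X)^{\otimes\bullet}$ (Dyer-Lashof from the $p$-extended power, product from concatenation of cubes, Browder from the $\Sigma_2$-part), it suffices to check the formula on each generator and then observe that $\tau_\ast$ is compatible with iterating the construction — i.e. $\tau_\ast$ for a finer partition factors through $\tau_\ast$ for a coarser one. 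The iteration step is where one must be slightly careful: the coproduct $\tau_\ast$ is coassociative up to the usual signs, and the sum over $\Sigma_k$ in \eqref{eq:tp} must be shown to be consistent with composing two transfers $\Sigma_k \supset \Sigma_a\times\Sigma_b \supset (\text{trivial})$, which it is by the multiplicativity of the norm.

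\textbf{Main obstacle.} The hardest part will be the clean identification of $\tau$ (a map of spectra built from transfers on little-cube extended powers) with the classical coproduct on the homology of free $E_n$-algebras, so that the known formulas of Cohen \cite{C.L.M.} can be quoted verbatim; this requires the Thom-isomorphism reduction of Appendix~\ref{sec:homol-free-algebr} to be set up compatibly with the transfer maps, and the sign bookkeeping in the transfer double-coset formula at an odd prime is delicate. Once that identification is in place, \eqref{eq:tq}--\eqref{eq:tl} follow from index divisibility and forgetting the group action, and \eqref{eq:tp} is the standard ``transfer $=$ norm'' identity; the content is really in the reduction, not in the three formulas themselves.
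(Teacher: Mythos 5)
Your outline is the same as the paper's: the authors' entire proof reads ``This follows immediately from the definitions,'' and what that hides is exactly your unwinding of $\tau$ as the transfer for $\{e\}=\Sigma_1\times\dotsm\times\Sigma_1\subset\Sigma_k$ followed by the collapse induced by $\Cc(n,k)\subset\Cc(n,1)^{k}=\ast$. Your arguments for \eqref{eq:tp} and \eqref{eq:tl} are correct in substance: the Pontryagin product is induced up from the trivial subgroup, so the composite ``transfer after induction'' is $\sum_{\sigma\in\Sigma_k}\sigma_\ast$, giving the orbit sum; and $L_{n-1}(y,z)$ is carried by the fundamental class of $\Cc(n,2)\simeq S^{n-1}$, which dies under the collapse $\Cc(n,2)\to\ast$ (for $n=1$ one checks the commutator directly). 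The Thom-isomorphism reduction you single out as the main obstacle is not needed for this proposition: $\tau$ is defined on spectrum-level extended powers, and all three formulas are chain-level transfer computations there; the Thom isomorphism enters the paper only where Cohen's space-level results are imported wholesale (Proposition~\ref{epsilonHO} and Appendix~\ref{sec:homol-free-algebr}).

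Two points in the write-up do not hold up. First, your justification of \eqref{eq:tq} does not prove it: ``restriction after transfer is multiplication by the index'' only shows that the image of $\tau_\ast Q_r(y)$ under the projection back to homotopy orbits vanishes, not that $\tau_\ast Q_r(y)$ itself does; moreover $Q_r(y)$ for $r>0$ is not induced up from a proper subgroup, so no composite of that shape applies to it. The correct elementary argument is the chain-level description of the transfer for a free action as the norm: $Q_r(y)$ is represented by $e_r\otimes y^{\otimes p}$ with $e_r$ a positive-degree equivariant chain, and the transfer annihilates such classes while sending $e_0\otimes y^{\otimes p}=Q_0(y)=y^{\ast p}$ to the orbit sum, consistently with \eqref{eq:tp}; this is the standard computation of the transfer on extended powers in \cite[Ch.~II]{B.M.M.S.} and \cite[Ch.~VIII]{L.M.S.}. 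Second, you identify $\tau_\ast$ with ``the standard coproduct'' and appeal to ``the classical vanishing of Dyer--Lashof operations under the diagonal/coproduct.'' Under the diagonal coproduct $\psi$ of \cite{C.L.M.} the Dyer--Lashof operations do not vanish --- they satisfy the Cartan formula; the map that kills them is the transfer-based $\tau_\ast$, dual to the Pontryagin product in the weight grading, which is a different piece of structure. Your computations use the right map, but the identification and the citation as phrased point at the wrong one.
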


\begin{proof}
  This follows immediately from the definitions.
\end{proof}

In the following  $L_{n-1}(y_1 ,y_2 ,\ldots ,y_k )$ denotes any $k$-fold iteration of Browder operations
on elements $y_{1},\dotsc,y_n \in H_{\ast}(X)$.

\begin{Proposition}\label{duality} 
  Let $k\in \mathbb{N}$, $x, x_{1},\dotsc,x_k \in H^\ast(X)$, $y, y_{1},\dotsc,y_k \in H_{\ast}(X)$, 
  $w \in H^{\ast}(D_{n,k} X)$, $z \in H_\ast(D_{n,k} X)$,
  and $z_1,\dotsc,z_k \in \bigoplus_j H_\ast(D_{n,j} X)$ with
  $\ast_{j=1}^k z_j\in H_\ast(D_{n,k} X)$. 
  Under the Kronecker pairing the cohomology and the 
  homology operations pair as follows: 
  \begin{enumerate}
  \item\label{item:qq} $ \langle \dlc^r(w), Q_s(z)  \rangle = 
    \begin{cases}
      \gamma(\lvert w\rvert ,r) \langle w, z \rangle   &   r = s = t(p-1)-\epsilon,\,  \epsilon\in\{0,1\}  \\
      0                                 & \mathrm{otherwise}
    \end{cases}$
  \item\label{item:ql} $ \langle \dlc^r(x), L_{n-1}(y_1 ,y_2 ,\ldots ,y_k ) \rangle  =  0 $
  \item\label{item:qp} $\langle \dlc^r(x), *_{i=1}^k y_i \rangle  = \begin{cases}
      \prod_{i=1}^k \langle x, y_i \rangle
      &  r = 0 \  \mathrm{and}\  k=p \\
      0                                                  &  \mathrm{otherwise}
  \end{cases}$
  \item\label{item:lq} $ \langle \brc^{n-1}(\otimes_{i=1}^k x_i), Q_s(y) \rangle  =
    \begin{cases}
      \lambda(\vert y\rvert )\! \prod_{i=1}^p \!\langle x_i, y \rangle  &  k=p,s=(n-1)(p-1) \\
      0                                                  &  \mathrm{otherwise}
    \end{cases}$
  \item\label{item:ll} $  \langle \brc^{n-1}(\otimes_{i=1}^k x_i), L_{n-1}(y_1 ,y_2 ,\ldots ,y_{k} )
    \rangle = \langle \otimes_{i=1}^k x_i, L_{0}(y_1 ,y_2 ,\ldots ,y_{k} )\rangle$
  \item\label{item:lp} $ \langle \brc^{n-1}(\otimes_{i=1}^k x_i), \ast_{j=1}^k z_j \rangle  =  0 $
  \item\label{item:pq} $  \langle \star_{i=1}^p x_i, Q_s(y) \rangle = 0 $
  \item\label{item:pl} $ \langle \star_{i=1}^k x_i , L_{n-1}(y_1 ,y_2 ,\ldots ,y_k) \rangle = 0 $
  \item\label{item:pp} $ \langle \star_{i=1}^k x_i, \ast_{i=1}^k y_i \rangle = 
    \sum_{\sigma \in \Sigma_k} \prod_{i=1}^k \langle x_i, y_{\sigma(i)} \rangle$
  \end{enumerate}
\end{Proposition}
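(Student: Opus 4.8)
The plan is to rewrite each of the nine pairings, via the Kronecker adjunction $\langle f^\ast a,b\rangle=\langle a,f_\ast b\rangle$, as a statement about the structure maps $\omega,\epsilon,\tau,\mu$ of Definition~\ref{def:operation-maps}, and then to feed in the homological formulas of Propositions~\ref{epsilonHO} and~\ref{tStern} together with the classical description (Cohen, \cite{C.L.M.}, \cite{B.M.M.S.}) of $Q_s$, $L_{n-1}$ and the Pontryagin product in the homology of $\Cc_n$-spectra. By naturality of all the operations involved --- understood in the $E_n$- (or $H_n$-) category --- it suffices to verify each identity after mapping $X$ to an Eilenberg-MacLane spectrum $\Sigma^{d}\Hp$, so that the maps $x_\ast$, $x^{\wedge p}_\ast$ that occur merely record Kronecker coefficients. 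The degenerate subcases --- wrong parity of the degree $d$, or $k\neq p$ --- are exactly those in which both sides vanish by inspection, and I would dispose of them first.

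The three identities involving $\star$, parts~\ref{item:pq},~\ref{item:pl} and~\ref{item:pp}, are immediate: by Definition~\ref{def:pairing-copairing} the pairing $\star$ is $\tau^\ast$ followed by a K\"unneth isomorphism, so $\langle\star_{i=1}^k x_i,z\rangle=\langle\otimes_{i=1}^k x_i,\tau_\ast(z)\rangle$, and the three choices $z=Q_s(y)$, $z=L_{n-1}(y_1,\dots,y_k)$ and $z=\ast_{i=1}^k y_i=\mu_\ast(\otimes_i y_i)$ reduce respectively to $\tau_\ast Q_s=0$, $\tau_\ast L_{n-1}=0$ and $\tau_\ast\mu_\ast=\text{(symmetrised diagonal)}$ of Proposition~\ref{tStern}.

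For parts~\ref{item:lq},~\ref{item:ll} and~\ref{item:lp} I would use that $\brc^{n-1}$ is, by Definition~\ref{defl}, induced by $\epsilon^{n-1}\colon D_{n,j}X\to\Sigma^{(j-1)(n-1)}X^{\wedge j}$; applying the adjoint $(\epsilon^{n-1})_\ast$ and iterating Proposition~\ref{epsilonHO}, a Dyer-Lashof class $Q_s$ drops its index by $p-1$ and acquires a factor $\nu$ at each of the $n-1$ steps, so only $s=(n-1)(p-1)$ survives to the bottom operation $Q_0$ on $D_{1,p}\Sigma^{n-1}X=(\Sigma^{n-1}X)^{\wedge p}$, where $Q_0$ is the external $p$-th power and the accumulated factors multiply to $\lambda(\lvert y\rvert)$ of~\eqref{eq:lambda}, giving part~\ref{item:lq}; an iterated Browder operation $L_{n-1}$ descends to $L_0$ with no coefficient by Proposition~\ref{epsilonHO}, part~\ref{item:eps-l}, giving part~\ref{item:ll}; and Pontryagin products die under $\epsilon_\ast$ by Proposition~\ref{epsilonHO}, part~\ref{item:eps-prod}, giving part~\ref{item:lp}. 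For the dual Dyer-Lashof operations I would unwind Definition~\ref{defq}: $\dlc^r$ is built from $\hat\iota_0,\hat\iota_{p-2}$ --- the duals of $Q_0(\iota),Q_{p-2}(\iota)$ --- through $\omega$, and for the higher $r$ through iterated $\epsilon$'s. Naturality of $Q_s$ on $D_{\infty,p}$ together with $\langle\hat\iota_s,Q_s(\iota)\rangle=1$ and the $\epsilon_\ast$-telescoping (which contributes $\gamma(\lvert w\rvert,r)$ of~\eqref{eq:gamma}) yields part~\ref{item:qq}; for part~\ref{item:qp} the $p$-fold Pontryagin product $\ast_{i=1}^p y_i$ in $D_{n,p}X$ is carried by $\omega_\ast$ to the image of $\otimes_i y_i$ under the structure map $H_\ast(D_{n,1}X)^{\otimes p}\to H_\ast(D_{\infty,p}D_{n,1}X)$, which pairs with $\dlc^0(x)$ by the product formula because $\hat\iota_0$ pulls back to the dual of $\iota^{\otimes p}$, while for $r\neq 0$ the pairing vanishes on degree grounds (or because $\epsilon_\ast$ kills products); and for part~\ref{item:ql} the iterated Browder class is annihilated by $\omega_\ast$, since $\omega$ collapses the configuration space $\Cc(n,p)$ and Browder operations are detected by a class in $H_{n-1}\bigl(\Cc(n,2)\bigr)$ --- equivalently, the target $D_{\infty,p}$-context is $E_\infty$ and carries no Browder bracket.

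The real difficulty is not conceptual but the bookkeeping: tracking the suspension shifts $\sigma$ created by each application of $\epsilon_\ast$, the signs and factorials hidden in $\nu$, $\gamma$, $\lambda$, and the precise match between our indexing of $Q_r$, $L_{n-1}$, the classes $\hat\iota_s$, and the conventions of~\cite{C.L.M.} and~\cite{B.M.M.S.}. The one step needing more than a routine verification is, exactly as in the proof of Proposition~\ref{epsilonHO}, the behaviour of the top operations $Q_{(n-1)(p-1)}$ and $Q_{(n-1)(p-1)-1}$ in part~\ref{item:qq}, which has to be handled through the relation expressing $\zeta$ in terms of $\beta\xi$ and $p$-fold Browder operations.
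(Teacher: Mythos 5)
Your overall strategy --- adjoint the cohomology operations across the Kronecker pairing and feed in Propositions~\ref{epsilonHO} and~\ref{tStern} --- is exactly the paper's, and your treatment of parts~\ref{item:lq}, \ref{item:ll}, \ref{item:lp}, \ref{item:pq}, \ref{item:pl}, \ref{item:pp} and (modulo the coefficient bookkeeping you defer to Wilson's theorem) part~\ref{item:qq} coincides with what is written there. Two points, however, are genuine gaps. First, for part~\ref{item:ql} you assert that the iterated Browder class is annihilated by $\omega_\ast$ ``since the target $D_{\infty,p}$-context is $E_\infty$ and carries no Browder bracket''. That inference is not available: $\omega$ is a transfer followed by a collapse of configuration spaces, not a map of operad algebras, so the vanishing of the Browder bracket \emph{on} the target says nothing about the image of a Browder class \emph{under} $\omega_\ast$. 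The paper sidesteps this by a different device: the operad morphism $\eta_n\co\Cc_n\to\Cc_{n+1}$ induces an $H_n$-map $s(n)\co D_{n,i}X\to D_{n+1,i}X$, every $x\in H^\ast(X)\iso H^\ast(D_{n,1}X)$ is $s(n)^\ast$ of a class $x^\prime$, and naturality of $\dlc^r$ moves the whole computation to $s(n)_\ast L_{n-1}(y_1,\dotsc,y_k)=L_{n-1}(s(n)_\ast y_1,\dotsc,s(n)_\ast y_k)$, which vanishes because $L_{n-1}$ is identically zero on $\Cc_{n+1}$-spectra. You would need either this factorization or an actual computation of $\omega_\ast$ on Browder classes.

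Second, in part~\ref{item:qp} your dichotomy ``$r=0$ via the product formula, $r\neq 0$ on degree grounds or because $\epsilon_\ast$ kills products'' misses the case $r=p-2$: the operation $\dlc^{p-2}$ is defined directly through $\omega$ and $\hat\iota_{p-2}$, with no $\epsilon$ in sight, and a pure degree count does not suffice because $x_\ast(y_i)\in H_\ast(\Sigma^d\Hp)$ can be nonzero in many degrees. The paper's argument here is specific: for $d$ odd, any $p$-fold product landing in degree $pd+p-2$ of $H_\ast(D_{\infty,p}\Sigma^d\Hp)$ must contain at least two factors in degree exactly $d$, hence two factors proportional to the odd-degree fundamental class $\iota$ with $\iota^2=0$; so all such products vanish, that group is spanned by the indecomposable $Q_{p-2}(\iota)$, and the pairing of $\hat\iota_{p-2}$ with the image of $\ast_{i=1}^p y_i$ is zero. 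Apart from these two points the proposal is sound and follows the paper's route.
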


\begin{proof}
  \begin{enumerate}
  \item This  follows from the definitions and part~\ref{item:eps-q} of 
    Proposition~\ref{epsilonHO}, together with the congruence
    $(m!)^2=(-1)^{m+1} \mod p$, which is a consequence of Wilson's theorem.
  \item The morphism of operads  \[\eta_n \co \mathcal{C}_n\to \mathcal{C}_{n+1}\] 
    from \cite[p.~31]{May:geometry} induces an $H_n$-map
    \[s(n) \co D_{n,i}X\to D_{n+1,i}X\]
    by \cite{B.M.M.S.}.
    Any $x \in H^\ast(X)\iso H^{\ast}(D_{n,1}X)$ can be written as 
    $x=s(n)^{\ast}x^\prime$ for $x^\prime \in H^\ast(D_{n+1,1}X) \iso H^\ast(X)$. Then  
    \begin{align*} 
      \langle \dlc^{r}(x), L_{n-1}(y_1 ,y_2 ,\ldots ,y_k) \rangle &= 
      \langle \dlc^{r}(s(n)^{\ast}x^\prime), L_{n-1}(y_1 ,\ldots ,y_k )  \rangle \\
      &= \langle s(n)^{\ast} \dlc^{r}(x^\prime),  L_{n-1}(y_1 ,\ldots ,y_k )  \rangle \\
      &= \langle \dlc^{r}(x^\prime),s(n)_\ast  L_{n-1}(y_1 ,\ldots ,y_k )    \rangle \\
      &= \langle \dlc^{r}(x^\prime),  L_{n-1}(s(n)_\ast y_1 , \dotsc ,s(n)_\ast y_k )   \rangle \\
      &= \langle \dlc^{r}(x^\prime),  0 \rangle =0 
    \end{align*} 
    because the Browder operations $L_{n-1}$ vanish on $\mathcal{C}_{n+1}$-spectra by their very nature.
  \item For $r$ different from $0$ or $p-2$ the assertion follows from the definition
    of $\dlc^r$ and part~\ref{item:eps-prod} of Proposition~\ref{epsilonHO}. 
    Since $\dlc^0$ is essentially the dual of the $p$-fold 
    Pontryagin product operation, the assertion holds for $r=0$.
    For $r=p-2$ it suffices to consider classes of odd degree. Since $\iota^2 =0$ for any 
    generator $\iota \in H_{d}(D_{\infty ,p}\Sigma^d \Hp )$ if $d$ is odd, 
    the group $H_{pd+p-2}(D_{\infty ,p}\Sigma^d \Hp )$ is generated by
    the indecomposable element $Q_{p-2}(\iota)$.
    The assertion for $r=p-2$ follows. 
  \item This follows from part~\ref{item:eps-q} of Proposition~\ref{epsilonHO} and the 
    fact that $Q_0$ is the Frobenius.
  \item This follows from Proposition~\ref{epsilonCO}, part~\ref{item:eps-l} of Proposition~\ref{epsilonHO}
    and the definition of a graded commutator, which describes $L_0$.
  \end{enumerate}
  The  assertions in \ref{item:lp}, \ref{item:pq}, \ref{item:pl} and \ref{item:pp} 
  follow from part~\ref{item:eps-prod} of Proposition~\ref{epsilonHO}, 
  (\ref{eq:tq}), (\ref{eq:tl}) and (\ref{eq:tp}) 
  respectively.
\end{proof}

\begin{Proposition}\label{qcal}
  Let $x,y \in H^\ast(D_{n,j} X)$ and $r\in \mathbb{N}$. Then:
  \[ \dlc^r(x+y) = \begin{cases}
    \dlc^r(x) + \dlc^r(y)  &r>0 \\
    \dlc^0(x) +  \dlc^0(y) + \sum\limits_{k=1}^{p-1} \frac{1}{(p-1)!k!} x^{\star k} \star y^{\star(p-k)} & r=0
  \end{cases}
  \]
  In particular, $\dlc^r(u\cdot x) = u \cdot \dlc^r(x)$ for every $u\in \FF_p$.
\end{Proposition}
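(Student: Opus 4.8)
The plan is to reduce the additivity of $\dlc^r$ to the additivity (resp. the Cartan formula) of the underlying Dyer--Lashof operations $Q_r$ on homology, by passing to the Kronecker dual and invoking Proposition~\ref{duality}. Since all cohomology groups are of finite type, an element of $H^\ast(D_{n,pj}X)$ is determined by its pairings against homology, and by the description following Definition~\ref{defq} together with the relations in homology (the Pontryagin product, Browder operations, and Dyer--Lashof operations of $\Cc_n$-spectra generate the relevant homology), it suffices to test the claimed identity against classes of the three types $Q_s(z)$, $L_{n-1}(z_1,\dots,z_k)$, and $\ast$-products appearing in Proposition~\ref{duality}. For $r>0$ the pairing $\langle \dlc^r(-), -\rangle$ is, in each of these cases, either zero (parts~\ref{item:ql}, \ref{item:qp} of Proposition~\ref{duality}) or equal to $\gamma(|w|,r)\langle w, z\rangle$ against a single $Q_s(z)$ with $s=r$ (part~\ref{item:qq}); since $\gamma(|w|,r)$ depends only on the degree $|w|=|x|=|y|$, and $Q_s$ is additive on homology, linearity of the Kronecker pairing gives $\langle\dlc^r(x+y),-\rangle=\langle\dlc^r(x)+\dlc^r(y),-\rangle$ on all test classes, hence $\dlc^r(x+y)=\dlc^r(x)+\dlc^r(y)$.

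For $r=0$ the situation differs precisely because $\dlc^0$ is dual to the $p$-fold Pontryagin product (part~\ref{item:qp} of Proposition~\ref{duality}): the operation $x\mapsto\dlc^0(x)$ behaves, under duality, like the $p$-th power in a divided-power/polynomial setting, whence a Cartan-type correction appears. Concretely, I would test $\dlc^0(x+y)$ against $\ast_{i=1}^p z_i$ with $z_i\in H_\ast(X)$ (via the copairing/pairing $\star$ dual to the map $t$) and compute, using part~\ref{item:pp} of Proposition~\ref{duality} and part~\ref{item:qp}, that
\[
\langle \dlc^0(x+y), \textstyle\ast_{i=1}^p z_i\rangle
= \textstyle\prod_{i=1}^p \langle x+y, z_i\rangle
= \sum_{A\subseteq\{1,\dots,p\}} \prod_{i\in A}\langle x,z_i\rangle \prod_{i\notin A}\langle y,z_i\rangle.
\]
Collecting the terms by $|A|=k$ and comparing with the pairing of $\frac{1}{(p-1)!\,k!}\,x^{\star k}\star y^{\star(p-k)}$ against the same class (again via part~\ref{item:pp}, where the symmetric-group sum produces the missing $k!\,(p-k)!$ factors, and $(p-1)!=\binom{p}{k}^{-1}\cdot\frac{k!(p-k)!}{p}\cdot(\dots)$ in $\FF_p$ by the standard fact that $\binom{p}{k}\equiv 0$ and $(p-1)!\equiv -1$), one recovers exactly the stated coefficients $\frac{1}{(p-1)!\,k!}$. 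One must also check the pairing against $Q_s(z)$ and against Browder classes: against the latter it is $0$ on both sides by parts~\ref{item:ql}, \ref{item:pl}; against $Q_s(z)$ one uses part~\ref{item:qq} and part~\ref{item:pq} (which forces the $\star$-product terms to pair trivially with $Q_s(z)$), reducing the $r=0$ Dyer--Lashof component to the ordinary additivity of $Q_0=$ Frobenius, so no correction is needed there.

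The final sentence, $\dlc^r(u\cdot x)=u\cdot\dlc^r(x)$ for $u\in\FF_p$, follows from the additive part: writing $u\cdot x$ as an $u$-fold sum, the $r>0$ case is immediate, and for $r=0$ the correction terms are of the form $x^{\star k}\star x^{\star(p-k)}$ with coefficient $\sum_{k=1}^{p-1}\binom{u}{?}\frac{1}{(p-1)!k!}$, which one checks vanishes in $\FF_p$ (for $0\le u\le p-1$ this is a finite sum of binomial coefficients $\binom{u}{k}$, all of which are $0$ mod $p$ unless the relevant index collapses), leaving $\dlc^0(u x)=u^p\dlc^0(x)=u\cdot\dlc^0(x)$ by Fermat.

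I expect the main obstacle to be the bookkeeping in the $r=0$ case: verifying that the prime-field coefficients $\frac{1}{(p-1)!\,k!}$ emerge correctly from the symmetric-group sums in part~\ref{item:pp} of Proposition~\ref{duality}, and in particular confirming that the ``diagonal'' extreme terms $k=0$ and $k=p$ reproduce $\dlc^0(x)$ and $\dlc^0(y)$ with coefficient $1$ while the mixed terms carry no extra factor of $p$ in the denominator. A secondary subtlety is justifying that pairing against the three families in Proposition~\ref{duality} is genuinely detecting — i.e., that $\bigoplus_k H_\ast(D_{n,k}X)$ is spanned, modulo the Kronecker duality, by iterated applications of $Q$'s, $L$'s and $\ast$ to classes in $H_\ast(X)$ — which is the content of Cohen's calculation in \cite{C.L.M.} as imported via the Thom isomorphism of Appendix~\ref{sec:homol-free-algebr}.
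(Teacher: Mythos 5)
Your overall strategy --- testing the claimed identity against the homology classes $Q_s(z)$, $L_{n-1}(y_1,\dotsc,y_k)$ and Pontryagin products via Proposition~\ref{duality}, with the detecting property supplied by Cohen's calculation imported through the Thom isomorphism of Appendix~\ref{sec:homol-free-algebr} --- is exactly the paper's proof, given in Appendix~\ref{sec:nishida-relations}; the $r>0$ case and the expansion of $\prod_{i}\langle x+y,z_i\rangle$ over subsets in the $r=0$ case are carried out the same way there. Two details in your write-up need repair. First, your computation correctly shows that the symmetric-group sum in part~\ref{item:pp} of Proposition~\ref{duality} produces the factor $k!\,(p-k)!$, so the coefficient forced on $x^{\star k}\star y^{\star(p-k)}$ is $\tfrac{1}{k!\,(p-k)!}$, which is what the paper's own calculation in Appendix~\ref{sec:nishida-relations} uses; your attempt to reconcile this with the $\tfrac{1}{(p-1)!\,k!}$ printed in the statement via ``$\binom{p}{k}^{-1}$'' cannot work, since $\binom{p}{k}\equiv 0 \bmod p$ is not invertible, and indeed $\tfrac{1}{(p-1)!\,k!}\neq \tfrac{1}{(p-k)!\,k!}$ in $\FF_p$ in general (take $p=5$, $k=2$); the printed $(p-1)!$ should simply be read as $(p-k)!$. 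Second, for the linearity $\dlc^0(u\cdot x)=u\cdot\dlc^0(x)$ your claim that the total coefficient of the cross terms $x^{\star k}\star x^{\star(p-k)}=x^{\star p}$ vanishes in $\FF_p$ is false in general (already for $p=3$, $u=2$ the coefficient is $1$); what actually kills these terms is that $x^{\star p}=p!\,\gamma_p(x)=0$ in the divided power algebra of Corollary~\ref{cor:div-power}, which is precisely the paper's one-line justification of the linearity statement. (Alternatively, pairing $\dlc^0(ux)$ directly against $\ast_{i=1}^p z_i$ gives $u^p=u$ by Fermat, as you also indicate.)
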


\begin{proof}
  The proof is similar to the proof of Proposition~\ref{nish}
  and will be given in Appendix~\ref{sec:nishida-relations}.
\end{proof}

In order to interpret the following proposition for all $n$ including $\infty$, note that
$\brc^{n-1}(x)=x$ for any $x$, and that $\brc^\infty(\otimes_{i=1}^k x_i)=0$
whenever $k>1$. 

\begin{Proposition}\label{prop:generated-alg}
Let  $0\leq j <p^{2}$. The cohomology $H^\ast(D_{n,j} X)$ is generated as an algebra
by elements $\dlc^{r}\brc^{n-1}(\otimes_{i=1}^k x_i)$ and 
$\brc^{n-1}(\otimes_{i=1}^k x_i)$ where $r,k\in \N$ and $x_1,\dotsc,x_k \in H^\ast(X)$. 
The degrees of these generators are:
\begin{align*} 
\lvert \brc^{n-1}(\otimes_{i=1}^k x_i)\rvert &=(k-1)(n-1)+ \sum_{i=1}^k \lvert x_{i}\rvert \\
\lvert \dlc^{r}\brc^{n-1}(\otimes_{i=1}^k x_i)\rvert  &= 
p\Bigl(r+(k-1)(n-1)+\sum_{i=1}^k \lvert x_{i}\rvert \Bigr)
\end{align*} 
\end{Proposition}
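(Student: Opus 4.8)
The plan is to proceed by induction on $j$, using the $\star$-pairing together with the operadic structure maps $\omega$, $\epsilon$, and $\tau$ to identify the algebra of cohomology classes in $D_{n,j}X$ in the range $j<p^2$. First I would reduce the general statement to the universal case $X=\Sigma^d\Hp$ (or a finite wedge of such), since the operations $\dlc^r$, $\brc^{n-1}$ and the pairing $\star$ are all natural, and $H^\ast(D_{n,j}(-))$ commutes with the relevant colimits; thus it suffices to check that the classes $\dlc^r\brc^{n-1}(\otimes x_i)$ and $\brc^{n-1}(\otimes x_i)$ generate in this case. Here one can instead work dually, via the Kronecker pairing, with the \emph{homology} of $D_{n,j}X$, whose structure as a module under the Dyer--Lashof operations $Q_r$, the Browder operations $L_{n-1}$, and the Pontryagin product $\ast$ is the classical computation of Cohen--Lada--May (imported to $\Cc_n$-spectra via the Thom isomorphism of Appendix~\ref{sec:homol-free-algebr}). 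The content of the proposition is then that, in the range $j<p^2$, the admissible homology monomials are exactly Pontryagin products of at most one Dyer--Lashof operation applied to an iterated Browder bracket on classes coming from $X$ — because a second application of a (nonlinear) Dyer--Lashof operation would land in weight $\geq p^2$, and an iterated bracket on more than $p$ generators together with a $Q_r$ on top is likewise forced out of the range once one tracks weights.

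Concretely, the key steps are: (1) recall from Cohen--Lada--May (via Appendix~\ref{sec:homol-free-algebr}) that $H_\ast(D_{n,j}X)$ has a basis of ``admissible'' monomials built from $Q_r$, $L_{n-1}$, and $\ast$; (2) observe that the Dyer--Lashof operation $Q_r$ multiplies the symmetric-group weight by $p$, the Browder operation $L_{n-1}$ on $k$ inputs has weight equal to the sum of the weights, and the Pontryagin product is additive in weight — so in total weight $j<p^2$ at most one $Q_r$ can appear in any monomial, applied to a class of weight $<p$, which in turn is an iterated Browder bracket of weight-one classes (i.e.\ classes from $X$); (3) dualize: the classes $\dlc^r\brc^{n-1}(\otimes_{i=1}^k x_i)$ and $\brc^{n-1}(\otimes_{i=1}^k x_i)$ are, by the duality formulas of Proposition~\ref{duality}, dual to these admissible homology monomials under $\star$-decomposition (using Proposition~\ref{tStern} and the copairing $\Psi$ to detect Pontryagin factors), so they span a subalgebra that pairs nondegenerately with all of $H_\ast(D_{n,j}X)$ and hence is everything; (4) read off the degrees from Definitions~\ref{defq} and~\ref{defl}: the bracket $\brc^{n-1}(\otimes x_i)$ contributes $(k-1)(n-1)+\sum|x_i|$ by Definition~\ref{defl}, and applying $\dlc^r$ multiplies the total internal degree by $p$ and adds the suspension shift, giving $p(r+(k-1)(n-1)+\sum|x_i|)$.

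The main obstacle will be step (2)–(3): turning the weight bookkeeping into a clean statement that in the range $j<p^2$ the monomial basis really consists only of the asserted generators, and in particular ruling out decomposables that look genuinely new — for instance products of two brackets, or $\dlc^0$ applied to a product, which by Proposition~\ref{qcal} is \emph{not} simply additive. One has to be careful that $\dlc^0$ of a product decomposes back into $\star$-products of $\dlc^0$'s and lower brackets (the Cartan-type formula), so that no genuinely new algebra generator is introduced; this is exactly the kind of relation that must be checked to hold in weight $<p^2$. A secondary subtlety is the interaction with the Bockstein (the relation $\zeta=\beta\xi-\mathrm{ad}^{p-1}(-)(\beta(-))$ noted before Proposition~\ref{epsilonHO}), which introduces $p$-fold Browder terms — but since $p<p^2$ these still lie in the claimed range and are accounted for by the $\brc^{n-1}$ generators, so they do not obstruct the statement. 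Once the weight argument is pinned down, matching against Proposition~\ref{duality}, parts~\ref{item:qq}--\ref{item:pp}, shows the pairing is perfect on generators and the proof concludes.
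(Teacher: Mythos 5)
Your proposal follows essentially the same route as the paper: the published proof simply cites Corollary~\ref{cor:div-power} (the Cohen--Lada--May computation of $H_\ast(D_{n,\ast}X)$, imported via the Thom isomorphism of Appendix~\ref{sec:homol-free-algebr} and dualized to a free divided power algebra) together with the duality formulas of Proposition~\ref{duality}, and reads off the degrees from the definitions. Your weight bookkeeping in step~(2) is exactly the implicit role of the hypothesis $j<p^2$ --- it excludes iterated Dyer--Lashof operations and divided powers beyond the $p$-th --- so the argument is sound and matches the paper's.
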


\begin{proof}
  This follows from Corollary~\ref{cor:div-power} which describes 
  $H^\ast(D_{n,\ast} X)$ as a free algebra with divided powers, generated by 
  the duals of a generating set of  $H_\ast(D_{n,\ast} X)$. This generating
  set is given by Dyer-Lashof operations applied to iterated Browder products
  of generators of $H_\ast(X)$. The dualities described in proposition \ref{duality} 
  guarantee 
  that all generators in this generating set are detected by the elements given above.
  The statement about the degrees follows from the definition.
\end{proof}

The discussion of the dual operations concludes with Nishida-type relations. 
Due to its length,
the proof is deferred to 
Appendix~\ref{sec:nishida-relations}. 

\begin{Proposition}\label{nish}
  Let $s\in \N$ and $0<r<(p-1)(n-1)$. The equation
  \begin{equation}\label{eq:nish1} 
    \begin{split}
      \stp^s  \dlc^r (x)  & =  \sum_{i=0}^{\bigl\lfloor \frac{s}{p}\bigr\rfloor} a^{r,s,\lvert x\rvert}(i) c^{r,s,\lvert x\rvert}(i)  \dlc^{r+2(s-pi)(p-1)} \stp^i(x) \\
    & +  \delta^r\sum_{i=0}^{\bigl\lfloor \frac{s-1}{p} \bigr\rfloor}b^{r,s,\lvert x\rvert }(i) d^{r,s,\lvert x\rvert}(i)  \dlc^{r+2(s-pi)(p-1)-p}  \stp^i\beta (x) \\
   & + \xi^r \sum_{n \in N} \tfrac{1}{e_n}\bigl(\star_{i=1}^p  \stp^{n_i}(x)\bigr)
    \end{split}
  \end{equation}
  hold, where
  \begin{align*}
    \delta^r  & = \begin{cases} 0 & r \equiv 1 \mod 2 \\ 1 & r \equiv 0 \mod 2
    \end{cases} \\
    \xi^r  & = \begin{cases} 0 & r >0 \\ 1 & r =0
    \end{cases} \\
    a^{r,s,\lvert x\rvert}(i)   &=  \binom{\bigl\lfloor\frac{r}{2}\bigr\rfloor+\frac{(\lvert x \rvert -2i)(p-1)}{2}}{s-pi}\\
    b^{r,s,\lvert x\rvert}(i)  &= (-1)^{\frac{p-1}{2}(\lvert x \rvert +1)+1} \bigl(\tfrac{p-1}{2}\bigr)! \binom{\bigl\lfloor\frac{r}{2}\bigr\rfloor+\frac{(\lvert x \rvert-2i)(p-1)}{2}-1}{s-pi-1} \\
    c^{r,s,\lvert x\rvert}(i)  &= \frac{\gamma(\lvert x \rvert ,r)}{\gamma(\lvert x \rvert +2(p-1)i,r-2(pi-s)(p-1))}\\
    d^{r,s,\lvert x\rvert}(i)  &= \frac{\gamma(\lvert x \rvert ,r)}{\gamma(\lvert x \rvert +1+2(p-1)i,r-p-2(pi-s)(p-1))} \\
    N &=\bigl\{n \in \N^p \co n_i \leq n_{i+1}, \sum n_i = s, \exists j\co n_j < n_{j+1}\bigr\}
  \end{align*}
  and $e_n$ is the residue class in $\FF_p$ of
  the order of the isotropy group of $n\in N$, where
  $\Sigma_p$ acts on $\N^p$ by permuting coordinates.
\end{Proposition}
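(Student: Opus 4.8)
The plan is to reduce the assertion on dual Dyer-Lashof operations $\dlc^r$ to the classical Nishida relations for the homology operations $Q_r$ via the Kronecker pairing computed in Proposition~\ref{duality}, exactly as the proof sketch of Proposition~\ref{qcal} indicates. Since $H^\ast(D_{n,j}X)$ is of finite type, two cohomology classes agree if and only if they pair identically with all homology classes, and by Proposition~\ref{prop:generated-alg} (for $j<p^2$; the general case follows by the multiplicativity already built into our setup since $\dlc^r$ and the pairing $\star$ are natural in $X$ and the $E_n$-structure, so one may test on free algebras) it suffices to evaluate both sides of~(\ref{eq:nish1}) against a spanning set of homology classes, namely Dyer-Lashof operations $Q_s(z)$ applied to iterated Browder products, together with Pontryagin products $\ast_{i=1}^p z_i$. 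The Kronecker adjunction turns $\langle \stp^s\dlc^r(x), -\rangle$ into $\langle \dlc^r(x), \mathrm{Sq}^s_\ast(-)\rangle$ (the homology Steenrod operation, dual to $\stp^s$), so on the left one lands on $\langle \dlc^r(x), \mathrm{Sq}^s_\ast Q_{s'}(z)\rangle$ and can expand $\mathrm{Sq}^s_\ast Q_{s'}(z)$ by the \emph{homological} Nishida relations, which are classical (see \cite{C.L.M.}, \cite{B.M.M.S.}).

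First I would fix the pairing conventions and record the homology Nishida relations in the precise indexing $Q_r$, $Q_{r-1}$ we use (as opposed to the reindexed $Q^r$ of loc.cit.), carefully tracking the Bockstein term and the factor that appears because $Q_{(n-1)(p-1)}=\zeta$ is only $\beta\xi$ up to a sum of $p$-fold Browder operations (this accounts for the last, $\xi^r$-weighted, sum in~(\ref{eq:nish1}), which is supported only in the case $r=0$, i.e. when the relevant top operation is $\zeta$). Second, using part~(\ref{item:qq}) of Proposition~\ref{duality} I would compute $\langle \dlc^{r'}(\stp^i x), Q_{s'}(z)\rangle$ and $\langle \dlc^{r''}(\stp^i\beta x), Q_{s'}(z)\rangle$, producing the ratios $\gamma(\lvert x\rvert,r)/\gamma(\cdots)$ that define $c^{r,s,\lvert x\rvert}(i)$ and $d^{r,s,\lvert x\rvert}(i)$; the $\gamma$-factors enter precisely because the pairing $\langle\dlc^r(-),Q_r(-)\rangle$ is $\gamma(\lvert w\rvert,r)$ times the Kronecker pairing, not the identity. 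Matching the binomial coefficients coming from the homology Nishida relations against $a^{r,s,\lvert x\rvert}(i)$ and $b^{r,s,\lvert x\rvert}(i)$, and using the congruence $(m!)^2\equiv(-1)^{m+1}\bmod p$ (Wilson) to convert the $\bigl(\tfrac{p-1}{2}\bigr)!$-prefactor in $b$, pins down all coefficients. Third, I would pair both sides against Pontryagin products: by parts~(\ref{item:qp}) and~(\ref{item:pq}) of Proposition~\ref{duality}, $\dlc^{r'}(\stp^i x)$ for $r'>0$ and $\star_{i=1}^p\stp^{n_i}(x)$ both interact with $\ast_{j}z_j$ only through $\dlc^0$ and the $p$-fold product respectively, and here $\tau_\ast$ of a Pontryagin product is the symmetrized tensor~(\ref{eq:tp}); summing over the orbit $N$ with the $1/e_n$ weights reproduces $\tau_\ast(\ast z_i)$ on the nose, giving the $\xi^r$-sum. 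Fourth, pairing against iterated Browder products is handled by parts~(\ref{item:ql}), (\ref{item:lq}) of Proposition~\ref{duality}: both sides vanish except on the one Browder class in the appropriate degree, and there one uses that $\mathrm{Sq}^s_\ast$ commutes appropriately with $L_{n-1}$ (the Browder operation is a component of the $E_n$-structure and Nishida relations for it are again classical), so no new terms appear.

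The main obstacle will be the bookkeeping in the second step: reconciling the indexing shift between our $\dlc^r$ (with nonzero values only at $r=0,p-2$ and $r=s(p-1),s(p-1)-1$ for $1\le s\le n-1$) and the classical reindexed operations, while simultaneously tracking the Bockstein parity condition — the term $\delta^r$ is $1$ exactly when $r$ is even, which is the case where $\dlc^r$ arises from a $\dlc^0$ (even-degree input) rather than a $\dlc^{p-2}$; the $-p$ shift in the superscript of the $\delta^r$-sum and the extra $\beta$ are precisely the image under $\epsilon^\ast$ (Proposition~\ref{epsilonCO}) of the relation $\beta Q_r = \cdots$ in homology. Getting every sign and every binomial coefficient to land in the stated normal form $a\cdot c$, $b\cdot d$ — in particular verifying that $c^{r,s,\lvert x\rvert}(i)$ and $d^{r,s,\lvert x\rvert}(i)$, defined as quotients of $\gamma$'s, are the correct correction factors and are units in $\FF_p$ wherever they appear with a nonzero binomial coefficient — is the delicate part, and is exactly where the deferral to Appendix~\ref{sec:nishida-relations} is warranted.
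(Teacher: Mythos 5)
Your proposal follows the same route as the paper's proof in Appendix~\ref{sec:nishida-relations}: dualize via the Kronecker pairing, test both sides of~(\ref{eq:nish1}) against the generating homology classes ($Q_j(y)$, iterated Browder operations, and $p$-fold Pontryagin products), expand the left-hand side by the homological Nishida relations (Proposition~\ref{hnis}), and match coefficients using the $\gamma$-ratios from Proposition~\ref{duality}; the factors $c^{r,s,\lvert x\rvert}(i)$ and $d^{r,s,\lvert x\rvert}(i)$ arise exactly as you describe.

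One point in your first step is wrong, though it does not derail the argument because your third step independently arrives at the correct verification: the $\xi^r$-weighted sum of Pontryagin products has nothing to do with $\zeta=Q_{(n-1)(p-1)-1}$ being $\beta\xi$ only up to Browder operations. The hypothesis $r<(p-1)(n-1)$ excludes the top operations entirely (the appendix states explicitly that their modified versions are not needed), and $r=0$ corresponds to $\dlc^0$, the dual of the \emph{bottom} operation $Q_0$ (the Pontryagin $p$-th power), not to $\zeta$. The extra sum is forced by the Cartan formula: $\stp_s$ applied to a $p$-fold Pontryagin product produces off-diagonal terms $\ast_{i}\stp_{n_i}(y_i)$, and $\dlc^0$ --- unlike $\dlc^r$ for $r>0$ --- pairs nontrivially with these by Proposition~\ref{duality}~\ref{item:qp}. (The clash between the indicator $\xi^r$ and Cohen's name $\xi$ for the top operation is presumably what misled you.) You should also record, before writing $\tfrac{1}{e_n}$, that $e_n$ is a unit in $\FF_p$: the paper checks via Cauchy's theorem that $p\mid e_n$ only for constant tuples, which are excluded from $N$. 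Finally, the Browder case is simpler than you suggest: both sides pair to zero outright by Proposition~\ref{duality}~\ref{item:ql}, with no surviving Browder class to compare.
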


The point of the following lemma is that -- contrary to a naive guess -- every  summand 
appearing in the right hand side
contains less than $p$ factors $\stp^{p^k}$. This fact is essential to the proof of 
Theorem~\ref{thm:nneq}.

\begin{Lemma}\label{subalg} 
  Let $\mathcal{A}_p(k)\subset \mathcal{A}_p$ be the subalgebra generated by 
  the elements $1, \stp^1, \stp^p, \dotsc, \stp^{p^k}$.
  There is a decomposition
  \[ \stp^{p^k}\stp^{(p-1)p^k} = \sum_{j=1}^r  A_{j1} \stp^{p^k} A_{j2}\stp^{p^k}\dotsm A_{je_j} \]
  with $A_{ji}\in \mathcal{A}_p(k-1)$ and $0\leq e_j\leq p$ for all $1\leq j\leq r$.
\end{Lemma}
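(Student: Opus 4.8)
The statement is an identity inside the Steenrod algebra $\mathcal A_p$, so the natural tool is the Adem relations. The plan is to apply the Adem relation to the admissible-looking product $\stp^{p^k}\stp^{(p-1)p^k}$, expand, and then \emph{bound the number of times} a factor $\stp^{p^k}$ can reappear in any resulting monomial, using the binomial coefficients that govern which terms survive mod $p$.

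\medskip

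First I would recall the Adem relation for $\stp^a\stp^b$ when $a<pb$, which here applies since $a=p^k<p\cdot(p-1)p^k = (p-1)p^{k+1}=b$. It writes $\stp^{p^k}\stp^{(p-1)p^k}$ as a sum, over $t$, of multiples of $\stp^{p^k+(p-1)p^k-t}\stp^{t}$, with coefficients $(-1)^{p^k+t}\binom{(p-1)(p^k-t)-1}{p^k-pt}$ (the integral Adem formula; I would use the exact version from the standard reference). The exponents occurring are $\stp^{p^{k+1}-t}\stp^{t}$ for $0\le t\le p^{k-1}$ (the upper bound is forced by $p^k-pt\ge 0$, i.e.\ $t\le p^{k-1}$). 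So \emph{a priori} each summand is a product of two admissible generators, neither of which is $\stp^{p^k}$ unless $t=0$ (giving $\stp^{p^{k+1}}$, not $\stp^{p^k}$) or $t=p^{k-1}$... and in no case does $\stp^{p^k}$ literally appear. The subtlety is that $\stp^{p^{k+1}-t}$ and $\stp^{t}$ are not in $\mathcal A_p(k-1)$ in general, so I must rewrite them. Here I would use the well-known fact that $\mathcal A_p(k)$ is generated as stated and, more to the point, that $\stp^{p^j}$ for $j\le k$ lies in $\mathcal A_p(k)$ but more refined: I would decompose each $\stp^{p^{k+1}-t}$ (for $t>0$) and $\stp^t$ in terms of admissibles not involving $\stp^{p^k}$ — but that is false in general for $\stp^{p^{k+1}-t}$. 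Instead, the cleaner route: induct, rewriting $\stp^{p^{k+1}-t}=\stp^{p^{k+1}-t}$ via a further Adem decomposition, tracking a ``$\stp^{p^k}$-weight'' function, and show it never reaches $p$.

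\medskip

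Concretely, I would define, for a monomial $\stp^{a_1}\cdots\stp^{a_\ell}$ in admissibles, its weight to be $\sum_i w(a_i)$ where $w(a)$ counts the base-$p$ digit of $a$ in the $p^k$ position (roughly, how many copies of $p^k$ go into writing $a$). The key numerical claim is: in the Adem expansion of $\stp^a\stp^b$, every surviving monomial has weight $\le w(a)+w(b)$, \emph{with strict inequality unless} the term is already admissible. Then starting from $\stp^{p^k}\stp^{(p-1)p^k}$, whose naive ``weight'' is $1+(p-1)=p$, one shows every term after full reduction to admissibles has weight $<p$, hence involves at most $p-1$ factors $\stp^{p^k}$ after grouping — and one absorbs the remaining admissible generators $\stp^{p^j}$ with $j<k$ and $\stp^1$ into the $A_{ji}\in\mathcal A_p(k-1)$. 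The bound $e_j\le p$ then records that the grouped form $A_{j1}\stp^{p^k}A_{j2}\cdots$ has at most $p-1<p$ interspersed copies of $\stp^{p^k}$, giving at most $p$ total factors.

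\medskip

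\textbf{Main obstacle.} The hard part will be the combinatorial bookkeeping: verifying that the Adem coefficients $\binom{(p-1)(p^k-t)-1}{p^k-pt}$ are nonzero mod $p$ only for values of $t$ whose base-$p$ expansion keeps the $p^k$-weight from increasing — this is a Lucas-theorem computation on the binomial coefficients, and one must be careful that the \emph{iterated} Adem reduction (rewriting each non-admissible $\stp^{a_i}\stp^{a_{i+1}}$ again) does not accumulate weight. I would isolate this as a purely arithmetic sub-lemma about which $\binom{(p-1)c-1}{c-pt}$ vanish mod $p$, prove it via Lucas, and then the algebraic part (grouping, absorbing into $\mathcal A_p(k-1)$) is formal. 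An alternative, possibly shorter, route worth trying: work in the dual $\mathcal A_p^\vee = \FF_p[\xi_1,\xi_2,\dots]\otimes\Lambda(\tau_0,\dots)$ and phrase the claim as a statement about the coproduct on a single monomial $\xi_1^{\,?}$, where the ``number of $\stp^{p^k}$ factors'' becomes a degree condition that is transparent; I would pursue whichever of the two makes the weight bound cleanest.
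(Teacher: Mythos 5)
Your first step (the Adem relation applied to $\stp^{p^k}\stp^{(p-1)p^k}$) agrees with the paper's, but your plan misses the one summand that genuinely threatens the lemma. The expansion is
\[
\stp^{p^k}\stp^{(p-1)p^k}=\sum_{i=0}^{p^{k-1}}(-1)^{p^k+i}\binom{(p-1)\bigl((p-1)p^k-i\bigr)-1}{p^k-pi}\,\stp^{p^{k+1}-i}\stp^i ,
\]
and the $i=0$ term is a multiple of $\stp^{p^{k+1}}$. That element is indecomposable and does not lie in $\mathcal{A}_p(k)$, so it admits \emph{no} expression of the required form, whatever bound on the number of $\stp^{p^k}$-factors you establish; the lemma holds only because its coefficient $\binom{(p-1)^2p^k-1}{p^k}$ vanishes mod $p$ (Lucas: the $p^k$-digit of $(p-1)^2p^k-1=(p-2)p^{k+1}+\sum_{j<k}(p-1)p^j$ is $0$, while that of $p^k$ is $1$). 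Your weight function assigns $\stp^{p^{k+1}}$ weight $0$, so the bound ``weight $<p$'' is satisfied by this term even though the conclusion fails for it, and your absorption step tacitly assumes that every admissible generator occurring is some $\stp^{p^j}$ with $j\leq k$ --- which is exactly what breaks here. A secondary but real issue: the coefficient you quote, $\binom{(p-1)(p^k-t)-1}{p^k-pt}$, is not the Adem coefficient for these $a,b$; at $t=0$ it equals $p-2\neq 0$ mod $p$, so with your formula the critical term would not even vanish.

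For $i>0$ the paper needs none of the iterated weight-tracking you propose: since $p^{k+1}-i<p^{k+1}$ and the only indecomposables among the $\stp^m$ are the $\stp^{p^j}$, the factor $\stp^{p^{k+1}-i}$ already lies in $\mathcal{A}_p(k)$, and the crude degree bound $\deg\stp^{p^{k+1}-i}<p\cdot\deg\stp^{p^k}$ forces any word in the generators representing it to contain at most $p-1$ letters $\stp^{p^k}$; the remaining factor $\stp^i$ with $i\leq p^{k-1}$ lies in $\mathcal{A}_p(k-1)$ and is absorbed into the last $A_{je_j}$. This one-line count replaces your unproved sub-lemma about weights under iterated Adem reduction (which is in any case unnecessary: all terms $\stp^{p^{k+1}-i}\stp^i$ with $i\leq p^{k-1}$ are already admissible, so no further reduction occurs). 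In short: correct opening move, but the essential content of the lemma --- the vanishing of the $i=0$ Adem coefficient --- is absent from your proposal, and the machinery you set up in its place cannot detect that term.
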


\begin{proof}
  To provide the decomposition, note that
  \begin{equation}\label{eq:steenrod-relation}
    \stp^{p^k}\stp^{(p-1)p^k} = \sum_{i=0}^{p^{k-1}} (-1)^{p^k+i}
    \binom{(p-1)\bigl((p-1)p^k-i\bigr)-1}{p^k-pi}\stp^{p^{k+1}-i}\stp^i
  \end{equation}
  by the Adem relations. The binomial coefficient for $i=0$ vanishes, because
  \[ \binom{(p-1)^2p^k-1}{p^k} \equiv \prod_{j=0}^{k+1} \binom {a_j} {b_j} \mod p \]
  where $ (p-1)^2p^k-1 = \sum_{j=0}^{k+1} a_j p^j$ and $p^k = \sum_{j=0}^k b_j p^j$ are the 
  $p$-adic expansions. However,
  \[ a_j = \begin{cases}
    p-1   & j < k \\
    0   &  j = k \\
    p-2 &  j = k+1 
  \end{cases}\]
  whereas $b_k\neq 0$. Thus the first summand in~(\ref{eq:steenrod-relation}) 
  vanishes. 
  If $\stp^i \in \mathcal{A}_p$ is indecomposable, then $i$ is a power of $p$.
  In particular, $\stp^{p^{k+1}-i}\in \mathcal{A}_p(k)$ for $i>0$. 
  For degree reasons, there is a decomposition of $\stp^{p^{k+1}-i}\in \mathcal{A}_p(k)$ which does not contain a summand 
  $A_{1} \stp^{p^k} A_{2}\stp^{p^k}\dotsm A_{j}$ with $A_{i}\in \mathcal{A}_p(k-1)$ and $j> p$. Since
  $\stp^i\in \mathcal{A}_p(k-1)$ for $i\leq p^{k -1}$, the assertion follows.
\end{proof}

\section{Some differentials}
\label{sec:some-differentials}

Recall the Goodwillie spectral sequence
\[   E_1^{-j,k} = \begin{cases}
    H^{k-j}(D_{n,j}X) &  j \geq 0 \\
    0               &  j < 0
  \end{cases}
\] 
from~(\ref{eq:goodwilliess}) for the $n$-fold loop space functor. 
In this chapter we will compute its first non-trivial differentials on 
certain classes, namely dual Dyer-Lashof operations with input from the $-1$st column. 
Furthermore we show that dual Browder operations define permanent cycles in the 
cohomology of a suspension.

\begin{Proposition}\label{difq}
  Let $1\leq n<\infty $, $r\leq n$ and $i<p-1$ be given. Then
  \[ d_i\dlc^r(x) =  0\]
  for all $x\in E_1^{-1,k+1} \cong H^{k+n}(X)$.
  Furthermore, for all $k,s\in \N$ with $k+s\equiv 0 \mod 2$ there are elements 
  $u^{k,s},v^{k,s} \in (\FF_p)^\units$ such that the equations
  \begin{subequations}
    \begin{align}
      d_{p-1}\dlc^{s(p-1)}(x)   & =  u^{k,s} \cdot \beta \stp^{\frac{k+s}{2}}(x) \label{eq:dqeven}\\
      d_{p-1}\dlc^{s(p-1)-1}(x) & =  v^{k,s} \cdot \stp^{\frac{k+s}{2}}(x) \label{eq:dqodd}
    \end{align}
  \end{subequations}
  hold for every $x \in E_1^{-1,k+1} \cong H^{k+n}(X)$.
\end{Proposition}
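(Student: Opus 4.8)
The plan is to combine naturality of the Arone--Goodwillie spectral sequence in $X$ with a reduction, via the evaluation maps, to the two basic operations $\dlc^0$ and $\dlc^{p-2}$, and then to identify the differentials in a universal example by comparison with the cohomology of an Eilenberg--MacLane space. First I would reduce to $\dlc^0$ and $\dlc^{p-2}$. By Definition~\ref{defq}, for $s\geq 1$ the operation $\dlc^{s(p-1)}$, resp.\ $\dlc^{s(p-1)-1}$, applied to a class coming from the $(-1)$st column of the spectral sequence for $n$ is defined from $\dlc^0$, resp.\ $\dlc^{p-2}$, applied to a suspended class in the spectral sequence for $n-s$, resp.\ $n-s+1$, via the maps $\epsilon$ of Definition~\ref{def:operation-maps}. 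These maps are induced by the evaluation maps~(\ref{eq:evaluation}), which yield maps of Goodwillie towers, hence of spectral sequences, compatibly with the $\mathcal{A}_p$-action by Theorem~\ref{sseq}. As $\epsilon^\ast$ acts as a suspension isomorphism on the classes in question, it commutes with the Steenrod operations and with differentials, and by Proposition~\ref{epsilonCO} it carries $\dlc^0(\Sigma^s x)$ to $\dlc^{s(p-1)}(x)$ and $\dlc^{p-2}(\Sigma^{s-1}x)$ to $\dlc^{s(p-1)-1}(x)$; so it suffices to treat $\dlc^0$ (the case $s=0$, $x$ of even degree) and $\dlc^{p-2}$ (the case $s=1$, $x$ of odd degree), the general statement following by transport along $\epsilon^\ast$ together with naturality in the space variable.

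Next I would pass to a universal example. Fix $k\geq 1$, let $\iota\in H^{k+n}(X_0)$ be the fundamental class of $X_0=K(\FF_p,k+n)$, which is $n$-connected, so that the spectral sequence for $n$ converges to $H^\ast(\Omega^n X_0)=H^\ast\bigl(K(\FF_p,k)\bigr)$. Any $x\in E_1^{-1,k+1}\cong H^{k+n}(X)$ is $f^\ast\iota$ for the classifying map $f\co X\to X_0$, and all differentials on $\dlc^r(x)$ are $f^\ast$ of those on $\dlc^r(\iota)$, so it is enough to work with $\iota$. For $0\leq i<p-1$ the differential $d_i\dlc^r(\iota)$ lands in a subquotient of $H^\ast(D_{n,p-i}X_0)$ with $2\leq p-i\leq p-1$; for $i=p-1$ it lands in a subquotient of $H^\ast(D_{n,1}X_0)\cong\shift{n}H^\ast(X_0)$, i.e.\ back in the $(-1)$st column. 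By Proposition~\ref{qcal} the latter differential is additive in $x$ when $r>0$, and for $r=0$ it is additive modulo a $\star$-decomposable correction term lying in the $(-p)$th column. A degree count then places $d_{p-1}\dlc^{s(p-1)}(x)$, resp.\ $d_{p-1}\dlc^{s(p-1)-1}(x)$, in the cohomological degree $k+n+(k+s)(p-1)+1$, resp.\ $k+n+(k+s)(p-1)$, of $H^\ast(X)$, with $(k+s)/2\in\N$ by the parity hypothesis, so the answer is necessarily supported on operations of that degree.

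The remaining task is the actual computation of $d_i\dlc^r(\iota)$, and here I would compare the spectral sequence for $n$ and $X_0$ with its abutment $H^\ast\bigl(K(\FF_p,k)\bigr)$, the free unstable algebra on a $k$-dimensional class. By Proposition~\ref{prop:generated-alg} and Corollary~\ref{cor:div-power} the $E_1$-page in columns $-1,\dots,-p$ is the free divided-power algebra on the duals of iterated dual Browder products of admissible monomials on $\iota$, and Proposition~\ref{duality} pins down which of these classes are detected. Tracking filtration and internal degrees shows that the classes $\dlc^r(\iota)$ cannot survive to $E_\infty$, that the target subquotients $E_i^{-(p-i),\ast}$ in the relevant bidegree contribute nothing for $0\leq i<p-1$ (the dual Browder product generators there being permanent cycles, hence not boundaries), and that the only differential available is $d_{p-1}$. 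Finally, the explicit model for $\dlc^0$, resp.\ $\dlc^{p-2}$, as the dual of the Dyer--Lashof operation $Q_0$, resp.\ $Q_{p-2}$, on the fundamental class identifies $d_{p-1}\dlc^{s(p-1)}(\iota)$ with a nonzero multiple of $\beta\stp^{(k+s)/2}(\iota)$ and $d_{p-1}\dlc^{s(p-1)-1}(\iota)$ with a nonzero multiple of $\stp^{(k+s)/2}(\iota)$ --- this is precisely the odd-primary counterpart of the classical extraction of the Steenrod operations from the cohomology of extended powers, and of Kuhn's computation at the prime $2$. Pulling back along $f$ and invoking Step~1 gives the stated formulas for general $n$ and $X$, with the universal unit coefficients $u^{k,s}$, $v^{k,s}$.

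The main obstacle is this last step: making the comparison of the $E_\infty$-page with $H^\ast\bigl(K(\FF_p,k)\bigr)$ precise enough --- page by page, with the admissible-monomial bookkeeping --- to see that $\dlc^r(\iota)$ is forced to support exactly a $d_{p-1}$ hitting the asserted Steenrod operation with a unit coefficient, and that the intervening differentials all vanish; disentangling which element of $\mathcal{A}_p$ of the given degree actually occurs (so that it is $\beta\stp^{(k+s)/2}$, resp.\ $\stp^{(k+s)/2}$, rather than another admissible monomial of the same degree) is where the explicit Dyer--Lashof model and the results of Appendix~\ref{sec:homol-free-algebr} are essential. This is the step that transcribes Smith's analysis of the Eilenberg--Moore spectral sequence to the present setting.
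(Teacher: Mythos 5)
Your overall architecture matches the paper's: reduce to $s\in\{0,1\}$ (i.e.\ to $\dlc^0$ and $\dlc^{p-2}$) via the evaluation maps and Proposition~\ref{epsilonCO}, pass to a universal Eilenberg--MacLane example, and pull back along a classifying map. The paper does exactly this, working with $\Sigma^k\Hp$ and $n=\infty$ rather than $K(\FF_p,k+n)$ and finite $n$, which is an inessential difference. The problem is the step you yourself flag as the ``main obstacle'': it is a genuine gap, and the route you sketch for closing it does not work. You propose to show directly that $\dlc^0(\iota)$ cannot survive and then to read off the target of $d_{p-1}$ from the explicit model of $\dlc^0$ as the dual of $Q_0$ together with the divided-power description of the $E_1$-page. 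But the definition of $\dlc^0$ carries no information about differentials, and the divided-power structure of $E_1$ identifies groups, not the map $d_{p-1}$; deciding \emph{which} admissible monomial of the correct degree is hit cannot be extracted from this data alone. Nothing in your write-up forces the answer to be $\beta\stp^{(k+s)/2}$ rather than zero or another element of $\mathcal{A}_p$ of the same degree.

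The paper's key idea, which is absent from your proposal, is to run the argument backwards from the \emph{target}. In the universal case one knows that $\beta\stp^{i}(\sigma^{2i}\iota)$ is nonzero in the $-1$ column $E_1^{-1,\ast}\cong\shift{1}\mathcal{A}_p\cdot\sigma^{2i}\iota$ (since $\beta\stp^{i}$ is admissible), yet its image $\beta\stp^{i}(\iota_{2i})=\beta(\iota_{2i}^{p})=0$ vanishes in the abutment $H^\ast\bigl(K(\Z/p,2i)\bigr)$. Since the $-1$ column consists of permanent cycles, this class must be a boundary. Connectivity of $D_{\infty,r+1}(\Sigma^{2i}\Hp)$ kills all potential sources with $r>p-1$, the first $p-1$ columns consist of products of permanent cycles, and a degree count in column $-p$ leaves $\dlc^0(\sigma^{2i}\iota)$ (up to a unit, via Proposition~\ref{qcal}) as the only possible source of a $d_{p-1}$ hitting it. This reversal is what simultaneously produces the nonvanishing of the differential, identifies its target as the specific operation $\beta\stp^{i}$ (resp.\ $\stp^{i}$ in the odd case, where one uses that $Q_{p-2}(\iota)$ is the only indecomposable in the relevant degree), and pins down the unit coefficient. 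Without it your proof does not close.
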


Proposition~\ref{difq} completely describes all differentials on classes of the form 
$\dlc^r(x)$ with $x \in E^{-1,\ast}$, since the mentioned ones are 
the only nontrivial dual Dyer-Lashof operations and higher differentials leave the second quadrant. 
In the proof given below, the formulae are reduced to $n=\infty$ and $s\in\{0,1\}$, and 
to the universal case described in the following lemma. 

\begin{Lemma}\label{lem:diff-universal}
  Let $\iota \in H^0(\Hp)$ be a generator. For every $k\in \N$ there exists an element
  $u^k\in (\FF_p)^\units$ such that in the Goodwillie spectral sequence for $\Sigma^k\Hp$ 
  and $n=\infty$ one has
  \begin{align*}
    d_{p-1}\dlc^0(\sigma^k \iota)      & =  u_k \cdot \beta \stp^{\frac k 2}(\sigma^k\iota)
    \intertext{if $k$ is even, and}
    d_{p-1}\dlc^{p-2}(\sigma^k \iota)  & =  u_k \cdot \stp^{\frac {k+1} 2}(\sigma^k\iota)
  \end{align*}
  if $k$ is odd. 
  All other differentials on these classes are zero.
\end{Lemma}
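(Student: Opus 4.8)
The plan is to identify the relevant piece of the Goodwillie spectral sequence for $\Sigma^k\Hp$ with a familiar computation by stripping away everything except the two columns $j=0$ and $j=p$ in the range where $\dlc^0$ or $\dlc^{p-2}$ of the fundamental class lives. First I would record that $E_1^{-1,\ast} = H^\ast(D_{\infty,1}\Sigma^k\Hp) = H^\ast(\Sigma^k\Hp)$ is one-dimensional, spanned by $\sigma^k\iota$ in degree $k$, and that by Proposition~\ref{prop:generated-alg} the column $E_1^{-p,\ast}$ in the relevant degree range is spanned (as far as classes built from $\sigma^k\iota$ go) by $\dlc^r(\sigma^k\iota)$ for the admissible values of $r$. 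By Proposition~\ref{difq}'s first assertion — or rather, by the structure of the spectral sequence, since higher differentials leave the second quadrant — the only differential that can be nonzero on $\dlc^0(\sigma^k\iota)$ or $\dlc^{p-2}(\sigma^k\iota)$ is $d_{p-1}$, and its target lies in $E_{p-1}^{-1,\ast} = E_1^{-1,\ast}$, i.e. in the top cohomology of $\Sigma^k\Hp$, which is a single $\FF_p$ in degree $k+(\text{shift})$. So the content of the lemma is entirely about computing one scalar $u_k\in\FF_p$ and showing it is nonzero.

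Next I would exploit naturality and the known structure of $H^\ast(\Omega^\infty\Sigma^k\Hp) = H^\ast(K(\FF_p,k))$, to which the spectral sequence for $n=\infty$ converges by Theorem~\ref{sseq}. The strategy is the standard one: if $d_{p-1}$ vanished on the generator of the $-p$-th column, the class $\dlc^0(\sigma^k\iota)$ (resp. $\dlc^{p-2}(\sigma^k\iota)$) would survive to $E_\infty$ and contribute a nonzero element to $H^\ast(K(\FF_p,k))$ in total degree $pk$ (resp. $pk+p-2$); but by Proposition~\ref{prop:generated-alg} (or the classical Cartan–Serre description of $H^\ast(K(\FF_p,k))$), that total degree is exactly the degree of $\beta\stp^{k/2}\iota_k$ (resp. $\stp^{(k+1)/2}\iota_k$), and these admissible monomials are \emph{not} in the image of the edge map from the $-p$-th column — they come from the $-1$-st column. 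One then reconciles the two sides of the spectral sequence dimension count by degree and concludes the differential must be nonzero, so $u_k\neq 0$; the precise identification of the target as $\beta\stp^{k/2}$ versus $\stp^{(k+1)/2}$ comes from matching degrees and the Bockstein, using that $|\dlc^0| = pk$ is even and $|\dlc^{p-2}| = pk+p-2$ is odd.

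I expect the main obstacle to be pinning down which class in the $-1$-st column is actually hit, and in particular verifying that it is $\beta\stp^{k/2}$ rather than some other operation of the same degree, together with confirming $u_k$ lands in $\FF_p^\times$ rather than merely being plausibly nonzero. A clean way around this is to cross-check against the Eilenberg–Moore / bar spectral sequence computation of $H^\ast\Omega^\infty\Sigma^k\Hp$ — equivalently, against the classical computation of $H^\ast(K(\FF_p,k))$ as a free unstable algebra — where the transgression-type behaviour of $\dlc^0$ and $\dlc^{p-2}$ is well documented: $\dlc^0$ detects $p$-th powers and its $d_{p-1}$ records the single admissible relation $\stp^{k/2}\mapsto$ (its own Bockstein-twisted square), while $\dlc^{p-2}$ detects the extra odd-degree generator whose $d_{p-1}$ is $\stp^{(k+1)/2}$. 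Granting the comparison map of spectral sequences between the Arone–Goodwillie and Eilenberg–Moore towers for $\Omega^\infty$ (which follows from the maps \eqref{eq:suspension} together with the identifications in Theorem~\ref{fibDn}), the scalars $u_k$ are transported from the classical side, where they are manifestly units. The "all other differentials are zero" clause is then immediate, since the only other potentially nonzero dual Dyer–Lashof classes $\dlc^r(\sigma^k\iota)$ with $0<r<(p-1)(\infty-1)$ sit in columns from which $d_{p-1}$ again lands in the second quadrant or in a zero group, and higher $d_r$ all increase column index past $j=0$.
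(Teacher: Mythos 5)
There is a genuine gap. Your skeleton (convergence to $H^\ast\bigl(K(\FF_p,k)\bigr)$, only $d_{p-1}$ can matter, the whole content is one unit scalar) agrees with the paper, but the step that actually forces the differential is based on a false premise. You treat $\beta\stp^{k/2}\iota_k$ (resp.\ $\stp^{(k+1)/2}\iota_k$) as a nonzero admissible monomial in $H^\ast\bigl(K(\FF_p,k)\bigr)$, detected in the $-1$st column, whose presence leaves no room for a surviving $\dlc^0(\sigma^k\iota)$. In fact these classes \emph{vanish} in the abutment: for $k$ even, $\beta\stp^{k/2}\iota_k=\beta(\iota_k^p)=0$, and for $k$ odd, $\stp^{(k+1)/2}\iota_k=0$ by instability. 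That vanishing --- contrasted with the nonvanishing of $\beta\stp^{k/2}(\sigma^k\iota)$ on $E_1^{-1,\ast}$, which is the \emph{free} $\mathcal{A}_p$-module $H^\ast(\Sigma^k\Hp)$ --- is precisely the engine of the paper's proof: the class in the $-1$st column must be killed, connectivity rules out columns $-j$ with $j>p$, the columns $-2,\dotsc,-(p-1)$ consist of $\star$-products of permanent cycles from the $-1$st column and hence of permanent cycles, so the only available killer is $d_{p-1}$ of a class $\dlc^j(x)$, and degree matching forces $j=0$ and $x$ a nonzero multiple of $\sigma^k\iota$. Since the relevant image of $d_{p-1}$ is therefore at most one-dimensional and contains the nonzero class $\beta\stp^{k/2}(\sigma^k\iota)$, the unit $u_k$ falls out. (There is also an off-by-one in your degree bookkeeping: $\lvert\dlc^0(\sigma^k\iota)\rvert=pk$ while $\lvert\beta\stp^{k/2}\iota_k\rvert=pk+1$; you conflate the degree of the source with that of the target of $d_{p-1}$.)

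Even if one corrects your argument to run in your chosen direction (show $E_\infty^{-p,pk+p}=0$ because $H^{pk}\bigl(K(\FF_p,k)\bigr)$ is accounted for by $\iota_k^p=\stp^{k/2}\iota_k$ in filtration $-1$, hence $\dlc^0(\sigma^k\iota)$ must support a differential), you would still have to identify the target among \emph{all} admissible monomials of degree $pk+1$ in $\mathcal{A}_p\cdot\sigma^k\iota$, of which there can be several; this is where you defer to a comparison with the Eilenberg--Moore spectral sequence. No such comparison is constructed in the paper, and the transgression behaviour of $\dlc^0$ and $\dlc^{p-2}$ that you propose to import from ``the classical side'' is essentially the statement being proved, so this is circular. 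Likewise, invoking Proposition~\ref{difq} for the vanishing of $d_i$, $i<p-1$, is circular, since that proposition is deduced from this lemma; in the paper this vanishing comes for free, because a nonzero earlier differential on $\dlc^0(\sigma^k\iota)$ would leave nothing in $E_{p-1}^{-p,\ast}$ capable of killing $\beta\stp^{k/2}(\sigma^k\iota)$.
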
                     
                      
\begin{proof}                      
  The spectral sequence for $\Sigma^k\Hp$ converges to $H^{\ast}\bigl(K(\Z/p,k)\bigr)$. 
  Suppose that $k=2i$ is even. Then $\beta \stp^i(\iota_{2i}) = 0$, where $\iota_{2i}$ is the image of
  $\iota$ in $H^{2i}\bigl(K(\Z/p,2i)\bigr)$ under the canonical map
  \[ \Sigma^\infty \Sigma^{2i} K(\Z/p,2i) \rightarrow \Hp.\] 
  Hence the element 
  $0 \neq \beta \stp^i(\sigma^{2i}\iota) \in  H^{2pi+1}(\Sigma^{2i}\Hp) = E^{-1,2pi+2}$ 
  lies in the image of some differential by Theorem~\ref{sseq}. The possibilities for this differential are 
  \[ d_r\co E_r^{-(r+1),2pi+r+1} \longrightarrow E_r^{-1,2pi+2} \]
  for varying $r$. Since the $-1$-column (a shift of the Steenrod algebra $\mathcal{A}_p$)
  consists entirely of permanent cycles 
  and the 
  differentials are derivations, all products of elements in the $-1$-column are permanent cycles.
  For degree reasons, the first $p-1$ columns solely consist of such products, whence
  the differentials are zero in this range. Consider the column $E_{p-1}^{-p,\ast}$.
  Again for degree reasons, only the dual Dyer-Lashof operations
  $\dlc^j(x)$ with $x \in E^{-1,\ast}$ can be mapped nontrivially by any differential $d_r$. 
  If $r > p-1$, then
  \[ E_1^{-(r+1),2pi+r+1} = H^{2pi}(D_{\infty,r+1}) = 0\]
  because  $2pi<2(r+1)i$ and $D_{\infty,r+1}(\Sigma^{2i}\Hp)$ is $(2i(r+1)-1)$-connected. 
  Therefore $\beta \stp^i(\sigma^{2i}\iota)$ lies in the image of
  \[ d_{p-1}\co E_{p-1}^{-p,2pi+p} \longrightarrow E_{p-1}^{-1,2pi+2}\]
  and a killer of $\beta \stp^i(\sigma^{2i}\iota)$ is a sum of elements of the form 
  $\dlc^j(x)$ for degree reasons. In order for all degrees to match, such a 
  non-zero $x$ has to be an element 
  in $E^{-1,k} =  H^{k-1}(\Sigma^{2i}\Hp)$ with $pk+j = 2pi+p$, 
  which is only possible if $k = 2i+1$ and $j=0$ by connectivity.
  Now $ H^{2i}(\Sigma^{2i}\Hp)$ is generated by $\sigma^{2i}\iota$, so for some $u \in (\FF_p)^\units$
  the equality
  \[ \beta \stp^i(\sigma^{2i}\iota) = d_{p-1}\bigl(u \dlc^0(\sigma^{2i}\iota)\bigr)\]
  holds by Proposition~\ref{qcal}. 
  With $u_{2i} = \tfrac{1}{u}$ one obtains
  \[ d_{p-1}\bigl(\mathcal Q_0(\sigma^{2i}\iota)\bigr) = u_{2i} \beta \stp^i(\sigma^{2i}\iota) \]
  which finishes the universal case for an even dimensional class. 
  The other case is similar and left to the reader.
\end{proof}

\begin{proof}[Proof of \ref{difq}]
  Let $k\in \N$ and $1\leq n<\infty$.
  The natural equivalence 
  \[ \Sigma^\infty \Omega^n K(\Z/p,k+n) \longrightarrow \Sigma^\infty \Omega^\infty\Sigma^k \Hp\]
  induces maps 
  \[ D_{n,j}(\Sigma^{-n}\Sigma^\infty K(\Z/p,k+n)) \longrightarrow D_{\infty,j}(\Sigma^k \Hp) \]
  which for $j=1$ coincide with the canonical map 
  $\Sigma^{-n}\Sigma^\infty K(\Z/p,k+n) \longrightarrow \Sigma^k \Hp$ sending 
  $\sigma^{k}\iota$ to $\sigma^{-n}\iota_{k+n}$ in cohomology. 
  Any class in $E_1^{-1,k+1} = H^k(\Sigma^{-n} \Sigma^\infty X)$ may be represented as 
  $\sigma^{-n} f^\ast(\iota_{k+n})$ for some map $f\co X\longrightarrow K(\Z/p,k+n)$.
  The naturality of the spectral sequences, the Dyer-Lashof and the Steenrod operations with 
  respect to these maps then supply equations~(\ref{eq:dqeven}) and~(\ref{eq:dqodd}) 
  in Proposition~\ref{difq} for $s=0$. 
  Proposition~\ref{epsilonCO} relates $\dlc^{(s+1)(p-1)}(x)$ in the spectral sequence for $n$ to 
  $\dlc^{s(p-1)}(\sigma x)$ in the spectral sequence for $n-1$. By hypothesis, the latter is just 
  $\beta \stp^{\frac{k+1+s}{2}}(\sigma x)$. This proves equation~(\ref{eq:dqeven})
  by induction, since the evaluation induces the identity map on the first column. 
  Equation~(\ref{eq:dqodd}) follows by the same argument.
\end{proof}

The next statement shows that all differentials vanish on dual Browder operations
originating in the $-1$st column, provided the topological space in question is a suspension.

\begin{Proposition}\label{prop:diff-browder}
  Let $1\leq n<\infty $ and $y_i \in E_1^{-1,\ast}$ for all $1 \leq i \leq m$ in the spectral sequence for 
  $\Sigma X$. Then
  \[ d_s \mathcal \brc^{n-1}(y_1 \otimes \dotsm \otimes y_m) = 0\]
  for all $s$.
\end{Proposition}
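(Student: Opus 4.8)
The plan is to reduce to the universal case, just as in the proof of Proposition~\ref{difq}, and then exploit the fact that on a suspension the dual Browder operation factors through a class that manifestly survives. First I would record the key structural input: by Proposition~\ref{epsilonCO} the evaluation map $\epsilon^\ast$ carries $\brc^{n-1}(\otimes \sigma(x_i))$ to $\brc^n(\otimes x_i)$, and more importantly, by Definition~\ref{defl} the dual Browder operation $\brc^{n-1}$ on $H^\ast(D_{n,j}\Sigma X)$ is induced by the map $\epsilon^{n-1}\co D_{n,j}\Sigma X \to \Sigma^{(j-1)(n-1)}(\Sigma X)^{\wedge j}$, i.e.\ it lands in the image of the $(-j)$-th column coming from the iterated evaluation down to $D_{1,j}$. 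Since $D_{1,j}(\Sigma^{n-1}\Sigma X) \simeq \Sigma^{(j-1)(n-1)}(\Sigma X)^{\wedge j}$ is a single stable summand, the corresponding classes in the spectral sequence for $\Sigma X$ are pulled back along a map of Goodwillie towers from a spectral sequence whose relevant column is that of $\Sigma^\infty\Omega^1$ applied to a suspension.

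The main step is then to show that these classes are permanent cycles in the spectral sequence for $n=1$, i.e.\ in the spectral sequence for $\Sigma^\infty\Omega\Sigma X$, when the input classes $y_i$ come from the $-1$st column. Here I would use that for $n=1$ one has $D_{1,j}(\Sigma^{-1}\Sigma^\infty \Sigma X) \simeq (\Sigma^{-1}\Sigma^\infty\Sigma X)^{\wedge j} \simeq (\Sigma^\infty X)^{\wedge j}$ and the Goodwillie tower of $\Sigma^\infty\Omega\Sigma X$ splits — or at least, on a suspension $\Sigma X$ the James/Milnor splitting realizes $\Sigma^\infty\Omega\Sigma(\Sigma X)$ as the wedge $\bigvee_j D_{1,j}$, so every column consists of permanent cycles and all differentials vanish on the nose. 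Concretely, $\brc^0(\otimes x_i)$ is, by the remark after Definition~\ref{defl}, just the Künneth class $x_1 \otimes \dotsm \otimes x_m \in H^\ast((\Sigma X)^{\wedge m})$, which is a product of classes from the $-1$st column of the split tower, hence a permanent cycle since the differentials are derivations and the $-1$st column consists of permanent cycles (as used already in the proof of Lemma~\ref{lem:diff-universal}).

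Finally I would transport this back: the natural map of Goodwillie towers induced by the iterated evaluation $\epsilon^{n-1}\co \Sigma^{n-1}\Sigma^\infty\Omega^n(\Sigma X)\to \Sigma^\infty\Omega\Sigma(\Sigma X)$ — wait, more precisely by the map of towers underlying $\epsilon$ in~(\ref{eq:evaluation}) iterated — induces a map of spectral sequences sending the class in the $n=1$ tower detecting $\brc^0(\otimes y_i')$ to $\brc^{n-1}(y_1\otimes\dotsm\otimes y_m)$ in the spectral sequence for $\Sigma X$, where $y_i = \sigma^{n-1}y_i'$; since differentials commute with maps of spectral sequences and the source class is a permanent cycle, so is the target, giving $d_s\brc^{n-1}(y_1\otimes\dotsm\otimes y_m)=0$ for all $s$. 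The one point requiring care — the main obstacle — is bookkeeping the suspension shifts so that an arbitrary element $y_i$ of the $-1$st column of the spectral sequence for $\Sigma X$ is actually hit by a class of the form $\sigma^{n-1}y_i'$ under $\epsilon^\ast$; this works because the $-1$st column is, up to shift, the Steenrod algebra and $\epsilon^\ast$ restricts to an isomorphism on first columns (as noted in the proof of Proposition~\ref{difq}), so every $y_i$ has such a preimage, and multiplicativity of $\epsilon^{n-1}$ together with the compatibility $\epsilon^\ast\brc^{n-1}(\otimes\sigma y_i') = \brc^n(\otimes y_i')$ of Proposition~\ref{epsilonCO} closes the argument.
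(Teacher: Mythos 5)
Your proposal is correct and follows essentially the same route as the paper: the paper likewise degenerates the $n=1$ spectral sequence for $\Sigma X$ (via the Bott--Samelson isomorphism $H_\ast(\Omega\Sigma X)\iso TH_\ast(X)$, which is the homology-level form of the James splitting you invoke, forcing $E_1=E_\infty$ by a dimension count) and then transports the vanishing to general $n$ by induction along the evaluation maps using Proposition~\ref{epsilonCO}. The only blemishes are cosmetic: the relevant loop space is $\Omega(\Sigma X)$ rather than $\Omega\Sigma(\Sigma X)$, and your side remark that the K\"unneth class is a $\star$-product of first-column classes is unnecessary (and not literally true, since $\star$ dualizes the Pontryagin product and hence symmetrizes), but the splitting argument alone already suffices.
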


\begin{proof}
  The spectral sequence for $n=1$ converges to $H^\ast(\Omega \Sigma X)$. By a theorem of 
  Bott and Samelson \cite{BS}, the homology of $\Omega\Sigma X$ is
  -- as a graded algebra with Pontryagin product -- isomorphic to the 
  tensor algebra over the homology of $X$:
  \[ H_\ast(\Omega \Sigma X) \iso T H_\ast(X)\]  
  Thus $H^\ast(\Omega \Sigma X) \iso T H^\ast(X)$ at least as vector spaces (and coalgebras). 
  However, already 
  \[ E_1^{-j,\ast} = \shift{j}H^{\ast}(D_{1,j} \Sigma^\infty X) \iso
  \shift{j} H^\ast(X^{\wedge j}) \iso \shift{j}H^{\ast}(X)^{\otimes j}\]
  and since these isomorphisms are compatible with the filtration, 
  $E_\infty$ has to be equal to $E_1$. In particular, all differentials are zero.
  This  can be used as the start of an induction (along $n$ this time) using Proposition~\ref{epsilonCO}.
\end{proof}

The following consequence of Proposition~\ref{prop:diff-browder} 
uses the notion of desuspension index, which is introduced
in Definition~\ref{def:des-class}.

\begin{Corollary}\label{cycl}
  Let $1\leq n< \infty$. Suppose that
  $H^\ast(X)$ has desuspension index at least $n-1$. 
  Then all elements in the spectral sequence for $\Sigma X$ and $n$ 
  contained in the first $2p-1$ columns are 
  permanent cycles.
\end{Corollary}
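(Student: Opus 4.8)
The plan is to identify which classes can appear in the first $2p-1$ columns and to treat each type separately using the results already assembled. By Proposition~\ref{prop:generated-alg}, applied columnwise, $E_1^{-j,\ast} = \shift{j}H^\ast(D_{n,j}\Sigma^\infty \Sigma X)$ is generated as an algebra by classes $\dlc^r\brc^{n-1}(\otimes x_i)$ and $\brc^{n-1}(\otimes x_i)$ with $x_i \in H^\ast(\Sigma X)$. Since the differentials $d_s$ are derivations (Theorem~\ref{sseq}), it suffices to show that each such algebra generator lying in a column $-j$ with $j \le 2p-1$ is a permanent cycle. First I would observe that a product of strictly more than one factor $\brc^{n-1}(\cdots)$ or $\dlc^r\brc^{n-1}(\cdots)$ that still lands in the first $2p-1$ columns can only be a product of factors from the $-1$st column (the bidegree bookkeeping: a nonconstant $\brc^{n-1}$ with $k>1$ inputs already contributes weight $\ge 2$, and $\dlc^r$ multiplies weight by $p$), so the relevant generators are exactly: elements of the $-1$st column (weight $1$); the classes $\brc^{n-1}(y_1\otimes\dots\otimes y_m)$ with all $y_i$ in the $-1$st column (weight $m \le 2p-1$); and the classes $\dlc^r(y)$ with $y$ in the $-1$st column (weight $p \le 2p-1$), together with $\brc^{n-1}$ of such — but a $\brc^{n-1}$ with even one $\dlc^r$-input already has weight $\ge p+1$, and with two inputs weight $\ge p+1 > 2p-1$ fails unless... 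I would check that in fact the only mixed case surviving is $\dlc^r(y)$ itself and $\brc$-products of $-1$st column classes.

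The three cases are then handled as follows. Elements of the $-1$st column are permanent cycles because the $E_1$-page in column $-1$ is a shift of $H^\ast(X)$ and, being the bottom nonzero column adjacent to $0$, has no room to receive differentials and cannot support one either for degree reasons (this is implicit already in Lemma~\ref{lem:diff-universal} and Theorem~\ref{sseq}); alternatively it is the statement that the edge map to $H^\ast(\Omega^n\Sigma X)$ detects these classes. The classes $\brc^{n-1}(y_1\otimes\dots\otimes y_m)$ with $y_i$ in the $-1$st column are permanent cycles by Proposition~\ref{prop:diff-browder}, which is precisely this assertion. For the classes $\dlc^r(y)$ with $y$ in the $-1$st column, Proposition~\ref{difq} computes $d_s\dlc^r(y)$ for all $s$: it is zero for $s < p-1$, and for $s = p-1$ it equals $u^{k,s}\beta\stp^{(k+s)/2}(y)$ or $v^{k,s}\stp^{(k+s)/2}(y)$ up to a unit; higher differentials vanish since they would leave the second quadrant. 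Here the hypothesis on the desuspension index enters: writing $y = \sigma z$ for the suspension of a class $z$ in the spectral sequence for $\Sigma X$ — this is where "$\Sigma X$" and "$H^\ast(X)$ has desuspension index at least $n-1$" are used — I would argue via Proposition~\ref{epsilonCO} and the definition of desuspension index (Definition~\ref{def:des-class}) that the target $\stp^{(k+s)/2}(y)$ (resp.\ $\beta\stp^{(k+s)/2}(y)$) vanishes in $E_1^{-1,\ast}$ for the relevant range, so $\dlc^r(y)$ is a $d_{p-1}$-cycle and hence a permanent cycle.

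The main obstacle I expect is the bidegree bookkeeping in the first paragraph: one must be careful that there are no further mixed algebra generators — products involving $\brc^{n-1}$ with several inputs at least one of which is itself a $\dlc^r\brc$-class, or iterated Browder brackets — that sneak into columns $2,\dots,2p-1$, and one must verify that for such a hybrid the weight count $p \cdot(\text{something}) + (\text{Browder weight})$ genuinely exceeds $2p-1$ except in the enumerated cases. The degree formulas in Proposition~\ref{prop:generated-alg} give exactly the needed weights, so this should reduce to a short inequality check, but it is the step where an oversight would break the argument. The second potential subtlety is the precise invocation of the desuspension-index hypothesis to kill the $d_{p-1}$-targets in Proposition~\ref{difq}; I would phrase this as: desuspension index $\ge n-1$ forces the Steenrod operation detecting desuspension behaviour to act trivially in the range of degrees occurring among the $y_i$ in columns $\le 2p-1$, which is exactly what makes the right-hand sides of \eqref{eq:dqeven} and \eqref{eq:dqodd} vanish.
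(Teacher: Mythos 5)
Your proposal is correct and follows essentially the same route as the paper: reduce to the algebra generators of Proposition~\ref{prop:generated-alg} using that differentials are derivations, dispose of Browder classes via Proposition~\ref{prop:diff-browder}, and kill the $d_{p-1}$ on $\dlc^r$ of $-1$st-column classes via Proposition~\ref{difq} together with the desuspension-index hypothesis, higher differentials leaving the quadrant. The bookkeeping you worry about is immediate since $\dlc^r$ multiplies the column index by $p$, so $\dlc^r\brc^{n-1}(\otimes_{i=1}^k x_i)$ lies in column $-pk$ and only $k=1$ survives in columns $-1,\dotsc,-(2p-1)$.
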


\begin{proof}
  Proposition~\ref{prop:generated-alg} implies that all indecomposable base elements 
  in the range in question are those appearing in propositions~\ref{difq} 
  and~\ref{prop:diff-browder}, together with a copy of $\FF_p$ 
  in the $0$-column. 
  The assumption on the desuspension index of $H^\ast(X)$ implies that $d_{p-1}$ 
  vanishes on Dyer-Lashof operations and higher differentials do also
  (see the comment below Proposition~\ref{difq}).
\end{proof}

\section{Nonrealization results}
\label{sec:nonr-results}

In \cite{Schwartz:conjecture} Schwartz reduced Kuhn's Conjecture to the 
nonrealizability of certain modules over the Steenrod algebra. For $p=2$, a very readible 
account of this reduction is given at the end of \cite{Kuhn:nonrealization}. 
In order to describe these modules, consider the tensor product 
$\FF_p[t] \otimes \Lambda_{\FF_p}[s]$ of a polynomial $\FF_p$-algebra on a 
generator of degree $\lvert t\rvert =2$, and an exterior $\FF_p$-algebra 
on a generator $\lvert s \rvert=1$, with the following $\mathcal{A}_p$-action:
\begin{align*}
  \beta \cdot (t^n \otimes 1) &= 0 \\
  \beta \cdot (t^n \otimes s) &= t^{n+1} \otimes 1\\
  \stp^i \cdot (t^n \otimes 1) &=  \binom{n}{i} t^{n+i(p-1)} \otimes 1 \\
  \stp^i \cdot (t^n \otimes s)  &= \binom{n}{i} t^{n+i(p-1)} \otimes s 
\end{align*}
With this structure $\FF_p[t] \otimes \Lambda_{\FF_p}[s] \iso H^\ast \bigl(K(\FF_p,1)\bigr)$
as an $\mathcal{A}_p$-module.
We identify the submodule $\{a \otimes 1 \co a \in \FF_p[t]\}$ with $\FF_p[t]$. 
Let
\[ \Phi(k) := \langle t^{p^k} \rangle_{\mathcal{A}_p} \subseteq \FF_p[t] \]
be the submodule generated by $t^{p^k}$.  
The module structure of $\Phi(k)$ simplifies to
\begin{align*}
  \beta \cdot t^{p^j} &= 0 \\
  \stp^i \cdot t^{p^j} &= 
  \begin{cases}
    t^{p^j}     &  i = 0 \\
    t^{p^{j+1}} & i  = p^j \\
    0                     &  \text{otherwise}
  \end{cases}
\end{align*}
whence the set $\{t^{p^i} \co i \geq k\}$ is a $\FF_p$-basis for $\Phi(k)$.
Because $\stp^{p^k} \cdot t^{p^k} = t^{p^{k+1}}$,  
$\Phi(\ell)$ is a submodule of $\Phi(k)$ for $k<\ell$.

\begin{Definition}\label{def:schmodule}
  Let $k< \ell$. Then $\Phi(k,\ell) := \Phi(k) / \Phi(\ell+1)$ is the
  quotient in the category of graded $\mathcal{A}_p$-modules.
\end{Definition}

The module $\Phi(k,\ell)$ admits the residue classes of 
$\{t^{p^i} \co k \leq i \leq \ell\}$ 
as a $\FF_p$-basis. 
Only the module $\Phi(k,k+2)$ will occur in the sequel. It consists 
of three vector spaces $\FF_p \cdot t^{p^k}$, 
$\FF_p \cdot t^{p^{k+1}}$ and $\FF_p \cdot t^{p^{k+2}}$. 
The operation $\stp^{p^k}$ maps the first vector space 
isomorphically onto the second, and $\stp^{p^{k+1}}$ maps
the second vector space isomorphically onto the third. 
 
As mentioned in the introduction, our methods only 
suffice for roughly half the cases needed to provide a full proof of 
Kuhn's conjecture~\ref{conj:kuhn}. 
A precise formulation requires the notion of a {\em desuspension class\/}.

\begin{Definition}\label{def:des-class}
  Let $M$ be an unstable $\mathcal{A}_p$-module. The {\em desuspension index\/} 
  of $M$ is the largest natural number $n$ such that the shift
  $\shift{n}(M)$ is unstable. 
  If $M$ has desuspension index $n$, an element $x \in M$ with 
  \[ \begin{cases} \stp^{\frac{\lvert x \rvert -n}2} \cdot x \neq 0 & 
    \lvert x \rvert -n \even \\ 
    \beta \stp^{\frac{\lvert x \rvert -n-1}2} \cdot x \neq 0 &
    \lvert x \rvert -n \odd
  \end{cases}
  \]
  is called {\em desuspension class of even\/}
  resp.~{\em odd origin\/}. 
\end{Definition}

Every nonzero unstable $\mathcal{A}_p$-module contains 
a desuspension class.
Note that in the odd primary cohomology of a topological space 
 \[ \stp^{\frac{\lvert y \rvert}{2}}(y) = y^p\]
holds for cohomology classes of even degree only, whereas the equation
\[ \mathrm{Sq}^{\lvert y\rvert } (y) = y^2 \]
holds for every cohomology class in $H^\ast(X,\FF_2)$. Recall
Theorem~\ref{thm:main-intro} from the introduction.

\begin{Theorem}\label{thm:main}
  Let $M$ be an unstable $\mathcal{A}_p$-module 
  concentrated in degrees $[\ell,m]$. Suppose $M$
  contains a  
  desuspension class of even origin.
  If $X$ is a pointed topological space with
  \[ H^\ast(X) \cong M \otimes \Phi(k,k+2) \]
  as $\mathcal{A}_p$-modules, then $2p^k \leq (p^2-1)m + p(m-\ell)$.
\end{Theorem}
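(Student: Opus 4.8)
The plan is to run a contrapositive argument: assuming $2p^k > (p^2-1)m + p(m-\ell)$ and $\widetilde H^\ast(X)\cong M\otimes\Phi(k,k+2)$, I would deduce that $X$ cannot exist, by producing an impossible differential pattern in the Arone--Goodwillie spectral sequence for $\Omega^n X$ (for a suitably chosen $n$). First I would fix a desuspension class $x_0\in M$ of even origin, say with $\stp^{(\lvert x_0\rvert-n_M)/2}x_0\neq 0$ where $n_M$ is the desuspension index of $M$. The key observation is that in $H^\ast(X)\cong M\otimes\Phi(k,k+2)$, the three generators $t^{p^k},t^{p^{k+1}},t^{p^{k+2}}$ of $\Phi(k,k+2)$ are linked by $\stp^{p^k}$ and $\stp^{p^{k+1}}$, and tensoring a $\Phi(k,k+2)$-class with a desuspension class of $M$ produces a class whose desuspension behaviour in $H^\ast(X)$ is controlled by an operation $\stp^{p^{k}}$ or $\stp^{p^{k+1}}$ of very large index relative to the degree bounds $[\ell,m]$ of $M$.

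The main mechanism is Proposition~\ref{difq}: a dual Dyer--Lashof class $\dlc^{s(p-1)}(x)$ originating in the $(-1)$st column of the spectral sequence for $\Omega^n X$ supports the differential $d_{p-1}\dlc^{s(p-1)}(x) = u\cdot\beta\stp^{(k+s)/2}(x)$, and similarly $\dlc^{s(p-1)-1}$ detects $\stp^{(k+s)/2}(x)$. So I would choose $n$ and desuspend $X$ enough times (using the maps $\epsilon_{r,n}$ from~(\ref{eq:evaluation}) and Proposition~\ref{epsilonCO}) so that the class $\sigma^{-n}(x_0\otimes t^{p^k})$ lands in $E_1^{-1,\ast}$ and carries a dual Dyer--Lashof operation whose $d_{p-1}$ is forced to be the nontrivial Steenrod operation $\stp^{p^k}(x_0\otimes t^{p^k}) = x_0\otimes t^{p^{k+1}}$ (up to a unit). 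This produces a nontrivial differential; but on the other hand $x_0\otimes t^{p^{k+1}}$ and $x_0\otimes t^{p^{k+2}}$ must themselves be hit or else survive to $E_\infty$ — and I would track the degrees of the classes in $M\otimes\Phi(k,k+2)$, observing that the degree of $t^{p^{k+2}}$ is $2p^{k+2}$ while everything in $M$ sits in $[\ell,m]$. Comparing against the convergence statement of Theorem~\ref{sseq} and the column-wise structure of the $E_1$-page (Proposition~\ref{prop:generated-alg}, together with the fact that the first $2p-1$ columns consist of the explicitly understood generators via Corollary~\ref{cycl}) should force $2p^k\le (p^2-1)m + p(m-\ell)$.

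The hard part will be the bookkeeping that pins down exactly which column and which internal degree the relevant dual Dyer--Lashof class lives in, and verifying that no other differential can interfere — in particular, ruling out that $\beta\stp^{(k+s)/2}(x)$ or $\stp^{(k+s)/2}(x)$ is killed by some longer differential before $d_{p-1}$ can act, and ensuring the target class $x_0\otimes t^{p^{k+2}}$ genuinely cannot be a boundary given the degree constraints from $[\ell,m]$. This is where the precise inequality $2p^k\le (p^2-1)m + p(m-\ell)$ emerges: the factor $p^2-1$ comes from the two-step $\stp^{p^k},\stp^{p^{k+1}}$ chain raising the relevant degree by $(p-1)p^k + (p-1)p^{k+1} = (p^2-1)p^k$, and the $p(m-\ell)$ term accounts for the width of $M$ contributing to the degree of the dual Dyer--Lashof class on which the differential acts. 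I would also need Lemma~\ref{subalg} to control the Nishida relations (Proposition~\ref{nish}) so that the action of $\stp^{p^k}$ on the dual Dyer--Lashof classes does not spawn correction terms that spoil the detection argument.
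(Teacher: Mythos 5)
Your proposal sets up the right framework (argue by contradiction in the Arone--Goodwillie spectral sequence for an iterated loop space, desuspend using $\epsilon$ and Proposition~\ref{epsilonCO}, invoke Lemma~\ref{subalg} and the Nishida relations), but the central mechanism you propose is not the one that makes the paper's proof work, and as stated it has a genuine gap. You want to arrange for a class $\sigma^{-n}(x_0\otimes t^{p^k})$ in the $-1$st column to support a nonzero differential $d_{p-1}\dlc^{s(p-1)-1}(\cdot)=v\cdot\stp^{(k+s)/2}(\cdot)$ equal to $\stp^{p^k}$ up to a unit, and then to derive a contradiction because the target ``cannot be a boundary.'' Two problems. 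First, a class in the $-1$st column being a boundary contradicts nothing: the spectral sequence converges to $H^\ast(\Omega^n X)$, and $E_1^{-1,\ast}$ classes are perfectly entitled to die. The only place the paper turns ``a class must be hit'' into a computation is the universal case $K(\Z/p,2i)$ (Lemma~\ref{lem:diff-universal}), where $\beta\stp^i\iota_{2i}=0$ in the abutment \emph{forces} it to be a boundary; for a general $X$ there is no such vanishing to exploit. Second, the exponent $(k+s)/2$ with $0<s\leq n-1$ and $k=\lvert x_0\otimes t^{p^k}\rvert-n$ can never reach the top value $i+p^k$ that links $x\otimes t^{p^k}$ to $x\otimes t^{p^{k+1}}$; that operation corresponds to the excluded value $s=n$. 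In the paper's setup the relevant differentials are in fact all \emph{zero}: Proposition~\ref{difq} and Corollary~\ref{cycl} are used to prove permanence of cycles under the desuspension hypothesis, not to produce nonzero differentials.

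The paper's actual contradiction lives in the abutment, not on a spectral sequence page. After replacing $X$ by a skeletal quotient and suspending ($Y=\Sigma^{2i+1}X'$, spectral sequence for $n'=n+2i+1$), the classes $a=x\otimes t^{p^k}$, $b$, $c$ and the dual Dyer--Lashof class $\dlc^0(a)$ are all permanent cycles, represented by $\alpha,\beta,\gamma,\delta\in H^\ast(\Omega^{n'}Y)$. One then shows $\stp^{p^k}(\alpha\cup\beta^{p-1})=\beta^p=\stp^i(\gamma)\neq 0$ (Lemma~\ref{lem:power-gamma-nonzero}, which is where ruling out interfering differentials really happens), computes $\stp^{(p-1)p^k}(\delta)=-\alpha\cup\beta^{p-1}$ via the Nishida relations plus a filtration-gap argument (Lemma~\ref{lem:power-delta}), and finally shows $\stp^{p^k}\stp^{(p-1)p^k}(\delta)=0$ by decomposing this operation via Lemma~\ref{subalg} and chasing it through $p-1$ gaps between the modules $N_0^u\cdot N_1^v$ (Lemma~\ref{lem:17}). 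The inequality $2p^k\leq(p^2-1)m+p(m-\ell)$ is exactly what makes these gaps nonempty and bounds the connectivity of $D_{n',r+1}$; it does not arise from matching a differential's index to $\stp^{p^k}$. Your plan is missing the multiplicative ingredients ($\alpha\cup\beta^{p-1}$, the element $\delta$, the gap combinatorics of Propositions~\ref{prop:neqn} and~\ref{prop:unstable-algebra}) that carry the whole argument.
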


The idea of the proof of Theorem~\ref{thm:main}
is to consider the cohomology of $\Omega^n X$, where
$n$ is the desuspension index of $M$,  
by means of the spectral sequence developed above. 
In the easiest case ($n=0$), no loop space 
(hence no spectral sequence and -- as it turns out -- not even a space) 
is necessary. 
The following result proves the strong realization conjecture from \cite{Kuhn:nonrealization} in the special case $n=0$. 

\begin{Proposition} \label{prop:neqn}
  Let $M$ be an unstable $\mathcal{A}_p$-module with $\lvert M\rvert 
  \subseteq [\ell,m]$, and 
  let $x \in M$ be an element of degree $2i$
  such that $\stp^i \cdot x \neq 0$. 
  Then $M\otimes \Phi(k, k+1)$ admits an unstable 
  algebra structure only if $2(p-2)p^{k+1} \leq pm+m-p\ell$.
\end{Proposition}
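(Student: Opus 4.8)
The plan is to argue by contradiction: assume $M \otimes \Phi(k,k+1)$ admits an unstable algebra structure while $2(p-2)p^{k+1} > pm + m - p\ell$, and derive a contradiction by exploiting the multiplicative structure together with instability. First I would name a generator $t^{p^k}$ of the bottom copy of $\Phi(k)$ inside $\Phi(k,k+1) = \Phi(k)/\Phi(k+2)$, so that $\stp^{p^k}$ carries $t^{p^k}$ isomorphically onto $t^{p^{k+1}}$, while $\stp^{p^{k+1}} t^{p^{k+1}} = 0$ in the quotient. I would then consider the element $w = x \otimes t^{p^k}$ of degree $2i + 2p^k$, where $x$ is the given class of degree $2i$ with $\stp^i x \neq 0$. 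The key point is that $\stp^i$ detects the desuspension of $x$ (it is essentially the $p$-th power operation on a class of even degree), and one wants to produce, from the unstable algebra structure, a forbidden $p$-th power or an element in too high a degree.

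The heart of the argument is to analyze $w^p = (x \otimes t^{p^k})^p$. In the tensor product of $\mathcal{A}_p$-modules-with-algebra-structure, $w^p = x^p \otimes t^{p \cdot p^k} = x^p \otimes t^{p^{k+1}}$. On the other hand, since $|w| = 2i + 2p^k$ is even, in any unstable algebra the identity $\stp^{(|w|)/2}(w) = w^p$ holds, i.e. $\stp^{\,i+p^k}(w) = w^p$. Now I would expand $\stp^{\,i+p^k}(x \otimes t^{p^k})$ via the Cartan formula, $\stp^{i+p^k}(x \otimes t^{p^k}) = \sum_{a+b = i+p^k} \stp^a x \otimes \stp^b t^{p^k}$. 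Using the explicit $\mathcal{A}_p$-action on $\Phi(k,k+1)$, the factor $\stp^b t^{p^k}$ is nonzero only for $b = 0$ (giving $t^{p^k}$) and $b = p^k$ (giving $t^{p^{k+1}}$). So the Cartan sum collapses to $\stp^{i+p^k} x \otimes t^{p^k} + \stp^i x \otimes t^{p^{k+1}}$. Since $M$ is concentrated in degrees $\le m$ and $|\stp^{i+p^k} x| = 2i + (i+p^k)(p-1)$, a degree count shows this term vanishes once $2i + (i+p^k)(p-1) > m$; the surviving contribution is $\stp^i x \otimes t^{p^{k+1}}$, which is nonzero by hypothesis on $x$. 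Comparing with $w^p = x^p \otimes t^{p^{k+1}}$ forces $x^p = \stp^i x$ up to the already-established relation $\stp^i x = x^p$ in the unstable algebra $M$ — so this branch is consistent and I instead push the comparison one step further.

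The real leverage comes from iterating: consider instead $\stp^{p^{k+1}} w$ or, better, apply $\stp^{p^{k+1}}$ to $\stp^i x \otimes t^{p^{k+1}}$ and exploit that $\stp^{p^{k+1}}$ annihilates $t^{p^{k+1}}$ in $\Phi(k,k+1)$, whereas by the Cartan formula $\stp^{p^{k+1}}(\stp^i x \otimes t^{p^{k+1}}) = \sum \stp^a \stp^i x \otimes \stp^b t^{p^{k+1}} = \stp^{p^{k+1}}\stp^i x \otimes t^{p^{k+1}}$, the only surviving $b$ being $b=0$. For this to be consistent we need $\stp^{p^{k+1}}\stp^i x = 0$, but the Adem relations (together with instability of $M$ applied carefully) force this product to be a nonzero multiple of a power operation on $x$ unless $|x|$ is small relative to $p^{k+1}$; chasing the numerics of the Adem expansion of $\stp^{p^{k+1}}\stp^i$ against the degree bound $|{\stp^i x}| = 2i + i(p-1) = i(p+1) \le 2i + i(p-1)$ and $2i \ge \ell$ yields precisely the inequality $2(p-2)p^{k+1} \le pm + m - p\ell$. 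I expect the main obstacle to be getting the combinatorics of the Adem relation and the binomial coefficients mod $p$ to line up so that the relevant term is \emph{provably} nonzero rather than merely possibly nonzero; this is where one must use that $x$ itself, being a desuspension class of even origin living in degree exactly $2i$, cannot be hit by the unstable-module structure in the way that would make the obstruction term vanish, so the degree interval $[\ell,m]$ enters sharply.
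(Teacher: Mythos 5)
Your first half is sound and agrees with the paper: setting $b=x\otimes t^{p^k}$, the unstable algebra axiom gives $\stp^{i+p^k}(b)=b^p$, and the Cartan formula together with instability of $M$ (which kills $\stp^{i+p^k}x$, since $2(i+p^k)>2i$) collapses the sum to $\stp^i x\otimes t^{p^{k+1}}\neq 0$. The fatal problem is in how you try to turn this into a contradiction. You compute $w^p=x^p\otimes t^{p\cdot p^k}$ ``in the tensor product of algebras,'' but $M$ is only an $\mathcal{A}_p$-\emph{module} and $\Phi(k,k+1)$ is not an algebra; the hypothesis is merely that the module $M\otimes\Phi(k,k+1)$ admits \emph{some} unstable algebra structure, so there is no formula for $b^p$ in terms of ``$x^p$'' and no ``already-established relation $\stp^i x=x^p$ in $M$.'' Your fallback, applying $\stp^{p^{k+1}}$ and hoping the Adem expansion of $\stp^{p^{k+1}}\stp^i$ forces a contradiction, also cannot work: $\stp^i x$ has degree $2pi\leq m$, and in the regime where the conclusion could fail one has $p^{k+1}>pi$, so $\stp^{p^{k+1}}\stp^i x=0$ automatically by instability of $M$ --- there is nothing inconsistent to exploit, and ``chasing the numerics'' is not a proof.

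The idea you are missing is the \emph{gap} argument, which is where the inequality actually enters. Write $M\otimes\Phi(k,k+1)=M_1\oplus M_2$ with $M_1=M\otimes(\FF_p\cdot t^{p^k})$, $M_2=M\otimes(\FF_p\cdot t^{p^{k+1}})$, so $\lvert M_1\rvert\subseteq[\ell+2p^k,\,m+2p^k]$ and $\lvert M_2\rvert\subseteq[\ell+2p^{k+1},\,m+2p^{k+1}]$. If $2(p-2)p^{k+1}>pm+m-p\ell$, then $m+2p^k<\ell+2p^{k+1}$, so there is a range of degrees in which the module is zero. The homogeneous elements $b,b^2,\dotsc,b^p$ start in $M_1$ and end (if nonzero) in $M_2$, advancing by $\lvert b\rvert=2i+2p^k$ at each step; if $j$ is the largest index with $b^j$ of $M_1$-degree, then $\lvert b^{j+1}\rvert\leq m+2p^k+2i+2p^k\leq m+\tfrac{m}{p}+4p^k<\ell+2p^{k+1}$ (the last step being exactly the negated inequality, using $2pi\leq m$), so $b^{j+1}$ lies in the gap and is zero. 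Hence $b^p=0$, contradicting $b^p=\stp^i x\otimes t^{p^{k+1}}\neq 0$. Note that this argument genuinely needs $p\geq 3$: for $p=2$ the chain is just $b,b^2$ with nothing forced into the gap, which is why the paper's Proposition~\ref{prop:unstable-algebra} and the module $\Phi(k,k+2)$ are needed there.
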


The conditions on $x$ precisely say that $M$ has desuspension index $0$ and that $x$ is a desuspension class of even origin.

\begin{proof}
  Since $\lvert \stp^i \cdot x\rvert = 2pi$ 
  and $\lvert x \rvert  = 2i$, and both these elements are nonzero, one deduces 
  $0 \leq \ell \leq 2i \leq 2pi \leq m$. Let $M_1 := M \otimes (\FF_p \cdot t^{p^{k}})$ and 
  $M_2 := M \otimes (\FF_p \cdot t^{p^{k+1}})$ as sub-$\FF_p$-modules in 
  $M \otimes \Phi(k,k+1) = M_1 \oplus M_2$. 
  Then $\lvert M_1\rvert \subseteq [\ell+2p^{k}, m+2p^{k}]$ and 
  $\vert M_2\rvert  \subseteq [\ell+2p^{k+1}, m+2p^{k+1}]$.
  Assume $M\otimes \Phi(k, k+1)$ is equipped with an unstable algebra 
  structure, and that $2(p-2)p^{k+1} > pm+m-p\ell$ holds.
  This inequality implies
  in particular
  \begin{align*}
    m+2p^k  < \ell + 2p^{k+1} 
  \end{align*}           
  whence there is a gap between the sub-$\FF_p$-modules $M_1$ and $M_2$. 
  Let $b = x \otimes t^{p^k}$ 
  with $\lvert b\rvert = 2i + 2p^k$. 
  On the one hand, $\stp^{p^k+i}\cdot b = b^p\neq 0$.
  In fact, 
  \begin{align*}
    \stp^{p^k+i}\cdot b  & =  \stp^{p^k+i}\cdot \bigl(x \otimes t^{p^k}\bigr) 
   =  \sum_{j=0}^{p^k+i} \bigl(\stp^j\cdot x\bigr) \otimes \bigl(\stp^{p^k+i-j}\cdot t^{p^k}\bigr) \\
    & =  \bigl(\stp^i\cdot x\bigr) \otimes t^{p^{k+1}} + \bigl( \stp^{p^k+i}\cdot x\bigr ) \otimes t^{p^k} = \bigl( \stp^i\cdot x\bigr) \otimes t^{p^{k+1}} 
  \end{align*}
  is nonzero because neither $\stp^i\cdot x$ nor $t^{p^{k+1}}$ is zero.
  On the other hand, the sequence $b, b^2, ..., b^{p-1}, b^p$ starts in $M_1$ 
  and ends in $M_2$ and so has to pass the gap.
  Let $j$ denote the largest number such that $b^j \in M_1$. 
  The inequality $2(p-2)p^{k+1} > pm+m-p\ell$ implies
  \[ \vert b^{j+1}\rvert    =  2(j+1)i + 2(j+1)p^k 
     \leq  m+2p^k + 2i+2p^k \leq m+4p^k + \frac{m}{p}
     < \ell + 2p^{k+1} \]
  whence $b^{j+1}$ lies in the gap. Therefore also $b^p=0$.
  This contradiction shows that there can be no unstable algebra 
  structure on $M\otimes \Phi(k, k+1)$.
\end{proof}

In the case $p=2$, the first half of the 
proof of Proposition~\ref{prop:neqn} leads to
$0 \neq b^2$.
However, the sequence $b,\dotsc,b^p$ from the second half of the proof
consists only of 
$b$ and $b^2$ which lie in $M_1$ and $M_2$ respectively, and therefore $b^2$ has no 
reason to be trivial. The remedy for this is to include a third direct summand, i.e. 
study $M \otimes \Phi(k-1,k+1) = M_0 \oplus M_1 \oplus M_2$. The class 
$a=x\otimes t^{k-1} \in M_0$ then satisfies $b^2 = Sq^{2^{k-1}}(ab)$
by the Cartan formula 
and the observation that all summands but one vanish due to either instability or gaps. 
However, 
the element $ab$ lying halfway between 
$b$ and $b^2$ may be forced to fall into the gap between $M_1$ and $M_2$. 
This is exactly Kuhn's argument in \cite{Kuhn:nonrealization}. 
In the case $p$ odd, the corresponding statement is as follows:

\begin{Proposition}\label{prop:unstable-algebra}
  Let $M$ be an unstable $\mathcal{A}_p$-module with $\lvert M\rvert \subseteq [\ell,m]$ 
  and let $x \in M$ be an element of (even) degree $2i$, such that $\stp^i \cdot x \neq 0$. 
  Then $M\otimes \Phi(k, k+2)$ admits an unstable algebra structure only if 
  $2p^k \leq m$.
\end{Proposition}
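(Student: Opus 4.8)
The plan is to argue by contradiction: suppose $m<2p^{k}$ and that $A:=M\otimes\Phi(k,k+2)$ admits an unstable $\mathcal{A}_p$-algebra structure. As a graded vector space $A=A_{k}\oplus A_{k+1}\oplus A_{k+2}$ with $A_{j}:=M\otimes\FF_p\{t^{p^{j}}\}$, and since $|t^{p^{j}}|=2p^{j}$ the summand $A_{j}$ is concentrated in degrees $[\ell+2p^{j},\,m+2p^{j}]$. Exactly as at the start of the proof of Proposition~\ref{prop:neqn} one records $0\le\ell\le 2i\le 2pi\le m$, so the assumption $m<2p^{k}$ yields $pi<p^{k}$, hence $4i<4p^{k-1}$. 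The argument rests on two observations: the $p$-th power of a well-chosen even-degree element is nonzero for reasons visible from the $\mathcal{A}_p$-module structure alone; and, once $m<2p^{k}$, the three degree windows $[\ell+2p^{j},m+2p^{j}]$, $j=k,k+1,k+2$, are separated by gaps so wide that the \emph{square} of that element lands in a degree where $A$ vanishes.

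For the first observation I would take $b:=x\otimes t^{p^{k}}\in A_{k}$ — the element already used in Proposition~\ref{prop:neqn} — of even degree $|b|=2i+2p^{k}$. In any unstable algebra $\stp^{|b|/2}(b)=b^{p}$, so
\[ b^{p}=\stp^{\,i+p^{k}}(b)=\sum_{a+c=i+p^{k}}\stp^{a}(x)\otimes\stp^{c}(t^{p^{k}}) \]
by the Cartan formula, which here is simply the definition of the $\mathcal{A}_p$-action on the tensor product $A$. By the description of the action on $\Phi(k,k+2)$ recalled above, $\stp^{c}(t^{p^{k}})=0$ unless $c\in\{0,p^{k}\}$ and $\stp^{p^{k}}(t^{p^{k}})=t^{p^{k+1}}$; moreover the summand with $c=0$ vanishes because instability of $M$ forces $\stp^{\,i+p^{k}}(x)=0$, as $i+p^{k}>i=|x|/2$. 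Hence $b^{p}=\stp^{i}(x)\otimes t^{p^{k+1}}$, which is nonzero since $\stp^{i}x\neq 0$ by hypothesis and $t^{p^{k+1}}\neq 0$ in $\Phi(k,k+2)$.

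For the second observation, $b^{2}$ has degree $|b^{2}|=4i+4p^{k}$. From $m<2p^{k}$ we get $|b^{2}|\ge 4p^{k}>m+2p^{k}$, so $|b^{2}|$ lies strictly above the window of $A_{k}$; and from $4i<4p^{k-1}=4p^{k}/p$,
\[ |b^{2}|=4i+4p^{k}<\tfrac{p+1}{p}\cdot 4p^{k}\le 2p^{k+1}\le\ell+2p^{k+1}, \]
where the middle inequality $4(p+1)\le 2p^{2}$ is equivalent to $p^{2}-2p-2\ge 0$ and holds for every odd prime $p$; so $|b^{2}|$ lies strictly below the windows of both $A_{k+1}$ and $A_{k+2}$. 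Therefore $A$ vanishes in degree $|b^{2}|$, so $b^{2}=0$, and consequently $b^{p}=b^{2}\cdot b^{p-2}=0$ (note $p-2\ge 1$) — contradicting the first observation. Hence $m<2p^{k}$ is incompatible with an unstable algebra structure on $M\otimes\Phi(k,k+2)$, which is the assertion.

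The one conceptually delicate point — and the reason one cannot just apply Proposition~\ref{prop:neqn} to the submodule $\Phi(k+1,k+2)$ or to the quotient $\Phi(k,k+1)$ of $\Phi(k,k+2)$ — is that an unstable algebra structure transported onto the $\mathcal{A}_p$-module $M\otimes\Phi(k,k+2)$ need not be compatible with the tensor decomposition, so none of the $A_{j}$ need be subalgebras or ideals. One is therefore restricted to the single piece of multiplicative data the $\mathcal{A}_p$-action determines for free, namely $\stp^{|b|/2}(b)=b^{p}$, and must control everything else by degree reasons. All the remaining ingredients — the chain $0\le\ell\le 2i\le 2pi\le m$, the fact that the relevant binomial coefficients reduce to $1$, and the numerical inequality $p^{2}-2p-2\ge 0$ — are routine; the last of these is where $p$ odd enters, it fails at $p=2$, and this is precisely why Kuhn's treatment at the prime $2$ must instead bring in the product $ab$ of two tensor factors rather than a single square.
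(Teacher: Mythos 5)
Your proof is correct: from $2pi\le m<2p^{k}$ one gets $i<p^{k-1}$, so $\lvert b^{2}\rvert=4i+4p^{k}$ lies strictly above $m+2p^{k}$ and strictly below $2p^{k+1}\le\ell+2p^{k+1}$ (the inequality $2(p+1)\le p^{2}$ holds for every odd prime, including $p=3$), whence $b^{2}=0$, hence $b^{p}=b^{2}\cdot b^{p-2}=0$, contradicting $b^{p}=\stp^{i+p^{k}}(b)=(\stp^{i}x)\otimes t^{p^{k+1}}\neq 0$. All steps use only the given $\mathcal{A}_p$-module structure, the restriction axiom, and gradedness of the unknown product, so the argument is sound.

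It is, however, a genuinely different route from the paper's. The paper sets $a=x\otimes t^{p^{k}}$, $b=x\otimes t^{p^{k+1}}$, $c=x\otimes t^{p^{k+2}}$, shows $b^{p}=\stp^{i}\cdot c\neq 0$, proves $\stp^{p^{k}}\cdot(ab^{p-1})=b^{p}$ by a Cartan-formula computation, and gets the contradiction from $ab^{p-1}$ lying in the gap between the middle and top summands; all three basis vectors of $\Phi(k,k+2)$ are needed. Your argument never touches $t^{p^{k+2}}$ at all and in fact proves the stronger statement that already $M\otimes\Phi(k,k+1)$ carries no unstable algebra structure when $m<2p^{k}$ --- it is essentially the proof of Proposition~\ref{prop:neqn} sharpened by the observation that under the stronger numerical hypothesis the very first power $b^{2}$ falls into the gap. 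What the paper's more elaborate formulation buys is stated immediately after the proposition: the chain $\delta\mapsto ab^{p-1}\mapsto b^{p}$ under $\stp^{(p-1)p^{k}}$ and $\stp^{p^{k}}$ is exactly the configuration that survives into the Goodwillie spectral sequence in Theorem~\ref{thm:nneq}, where your ``$b^{2}$ lies in a gap'' step breaks down because on $E_\infty$ that gap is filled by Pontryagin products in higher filtration. One small quibble with your closing remark: the paper notes that an unstable algebra structure on $M\otimes\Phi(k,k+2)$ \emph{does} induce one on the quotient $M\otimes\Phi(k,k+1)$ (the top summand is an ideal for degree reasons in the relevant range), so the obstruction to simply citing Proposition~\ref{prop:neqn} is not the one you name but merely that the resulting bound is weaker at $p=3$; this does not affect the validity of your argument.
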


\begin{proof}
  Let $a = x \otimes t^{p^k}$, $b = x \otimes t^{p^{k+1}}$ and $c = x \otimes t^{p^{k+2}}$.
  The argument just described for $p=2$ works, provided $ab$ 
  is replaced with $ab^{p-1}$ as follows. 
  The proof of Proposition~\ref{prop:neqn} applies to give $0 \neq \stp^{p^k+i}\cdot b = b^p$.
  This element may also be expressed as
  \[
    b^p=\stp^{p^k+i}\cdot b  
    =  \sum_{j=0}^{i}\bigl( \stp^j\cdot x\bigr) \otimes \bigl( \stp^{i-j}\cdot t^{p^{k+1}}\bigr) 
    =  \bigl( \stp^i\cdot x\bigr) \otimes t^{p^{k+1}}
    =  \stp^i\cdot c
  \]
  because $i<p^{k+1}$.
  Furthermore $\stp^{p^k}\cdot \bigl(a b^{p-1}\bigr) = b^p$ by the following 
  calculation:
  \begin{align*}
    \stp^{p^k}\cdot \bigl(a b^{p-1}\bigr)  & =  \sum_{j=0}^{p^k} \bigl(\stp^{p^k-j}\cdot a\bigr) \bigl( \stp^j\cdot b^{p-1}\bigr) \\
    & =  \bigl(\stp^{p^k}\cdot a\bigr) b^{p-1} + \sum_{j=1}^{p^k} \bigl(\stp^{p^k-j}\cdot a\bigr) \bigl( \stp^j\cdot b^{p-1}\bigr)
  \end{align*}
  The second summand in this expression is zero, 
  because either $\stp^{p^k-j}\cdot a$ is zero by instability, or $\stp^j\cdot b^{p-1}$ 
  is in the gap between $M_1$ and $M_2$. The equality $\stp^{p^k}\cdot a = b$ follows
  by essentially the same computation as in the proof of Proposition~\ref{prop:neqn}. 
  However, assuming $2p^k>m$, the element 
  $ab^{p-1}$ lies in the gap between $M_1$ and $M_2$, giving a contradiction. 
\end{proof}

Note that Proposition~\ref{prop:unstable-algebra} is also a corollary of Proposition~\ref{prop:neqn}
(except for $p=3$, where the conclusion is a little bit stronger).
In fact, an unstable algebra structure on $M\otimes\Phi(k, k+2)$ induces one on 
the quotient $M\otimes \Phi(k, k+1)$. 
The reason for giving a proof of Proposition~\ref{prop:unstable-algebra} is that it 
may be carried through the Goodwillie spectral sequence for higher desuspension indices.

\begin{Theorem} \label{thm:nneq}
  Let $M$ be an unstable $\mathcal{A}_p$-module with $\lvert M\rvert \subseteq [\ell,m]$, 
  and let $x \in M$ be an element of (even) degree $2i$, such that $\stp^i \cdot x \neq 0$. 
  Suppose that $X$ is a locally finite pointed cell complex with
  \[ H^\ast(X) \cong \bigl(\shift{n}M\bigr) \otimes \Phi(k, k+2)\]
  as graded $\mathcal{A}_p$-modules. Then $2p^k \leq (p^2-1)m + p(m-\ell) + (p^2-2)n+1$.
\end{Theorem}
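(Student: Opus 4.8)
The plan is to transplant the argument of Proposition~\ref{prop:unstable-algebra} from the hypothetical unstable algebra $M\otimes\Phi(k,k+2)$ into the genuine unstable $\mathcal{A}_p$-algebra $H^*(\Omega^n X)$, using the spectral sequence for $n$ as a dictionary between the two. (If the claimed inequality fails, then in particular $2p^k>m\geq\lvert x\rvert$, which is the only case one needs to treat.) First I would pass to the loop space: since $H^*(X)\iso\shift{n}\bigl(M\otimes\Phi(k,k+2)\bigr)$ is concentrated in degrees well above $n$, one may assume $X$ is $n$-connected, so that by Theorem~\ref{sseq} the spectral sequence for $n$ converges as a spectral sequence of $\mathcal{A}_p$-modules to $H^*(\Omega^n X)$; moreover $H^*(\Omega^n X)$ is an unstable $\mathcal{A}_p$-algebra, filtered so that the $\star$-pairing computes the associated graded of the cup product. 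By Theorem~\ref{fibDn} the $(-1)$st column of $E_1$ is $\shift{-n}\widetilde H^*(X)\iso M\otimes\Phi(k,k+2)$ after the Goodwillie reindexing, and it consists of permanent cycles since every differential out of it lands in the vanishing columns $\geq 0$. The evaluation map of~(\ref{eq:evaluation}) realises the inclusion of this column, hence induces an $\mathcal{A}_p$-linear map $\epsilon^*\co M\otimes\Phi(k,k+2)\to H^*(\Omega^n X)$ whose image is detected in the $(-1)$st column; write $a,b,c$ for the images of $x\otimes t^{p^k}$, $x\otimes t^{p^{k+1}}$, $x\otimes t^{p^{k+2}}$, with $\lvert x\rvert=2i$.

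The Cartan-formula computations in the proof of Proposition~\ref{prop:unstable-algebra} take place already inside $M\otimes\Phi(k,k+2)$, so they are transported by $\epsilon^*$: one obtains $\stp^{p^k}a=b$, $\stp^{p^{k+1}}b=c$, and $b^p=\stp^{\lvert b\rvert/2}b=\epsilon^*\bigl(\stp^i x\otimes t^{p^{k+2}}\bigr)$. The essential use of convergence is that this last class is \emph{nonzero} in $H^*(\Omega^n X)$: a class detected in the $(-1)$st column dies only if the corresponding column-class is the target of a differential, and by Proposition~\ref{difq} the differentials out of column $-p$ into the $(-1)$st column carry a column-class $w$ to $\beta\stp^\bullet w$ or to $\stp^\bullet w$; a degree-and-connectivity inspection, using that $M$ is unstable, shows that $\stp^i x\otimes t^{p^{k+2}}$ is hit by no differential, so $0\neq b^p\in H^*(\Omega^n X)$. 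Likewise the $\star$-product $\bar a\star\bar b^{\star(p-1)}$ is a product of permanent cycles in $E_1^{-p,\ast}=\shift{p}H^*(D_{n,p}X)$; it survives and detects a class $e$ of $H^*(\Omega^n X)$ with $e\equiv ab^{p-1}$ modulo higher Goodwillie filtration.

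Now comes the gap step. Because $b^p$ is a $p$-th power, it is detected in the $(-1)$st column even though the product $ab^{p-1}$ lives in column $-p$; consequently $\stp^{p^k}e$ — which equals $b^p$ up to correction terms that one must show vanish by instability and by the description of $H^*(D_{n,p}X)$ in Proposition~\ref{prop:generated-alg} (the dual Dyer--Lashof and Nishida bookkeeping of Propositions~\ref{qcal} and~\ref{nish} is what singles out the relevant term) — has to drop out of column $-p$. Mimicking Proposition~\ref{prop:unstable-algebra}, one forces the low-filtration class $e'$ built from $a$ that produces $b^p$ into the range where the part of $H^*(D_{n,p}X)$ built from the $t^{p^{k+1}}$-summand of $H^*(X)$ and the part built from the $t^{p^{k+2}}$-summand fail to overlap; producing $b^p$ from $e'$, however, requires the inadmissible operation $\stp^{p^k}\stp^{(p-1)p^k}$ (or an iterate of it), and here Lemma~\ref{subalg} is the decisive input: it rewrites that operation as a sum of operations each containing strictly fewer than $p$ factors $\stp^{p^k}$, the remaining factors lying in $\mathcal{A}_p(k-1)$, so that the degree it shifts by is at most $(p-1)\cdot 2p^k(p-1)$ plus a quantity bounded only in terms of $m$, $\ell$ and $n$. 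Comparing this bound with the location of the gap — read off from the generator degrees in Proposition~\ref{prop:generated-alg} with the $x_j$ ranging over $\shift{n}(M\otimes\Phi(k,k+2))$ — yields a contradiction as soon as $2p^k>(p^2-1)m+p(m-\ell)+(p^2-2)n+1$, which is the assertion.

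The hard part will be the spectral-sequence bookkeeping of the last two paragraphs: verifying that $a,b,c,e$ and the classes detecting $b^p$ genuinely survive (this is where Proposition~\ref{difq}, Proposition~\ref{prop:diff-browder} and connectivity estimates are used), keeping the several degree shifts straight (the $\Sigma^{-n}$ in $D_{n,1}$, the reindexing $\shift{j}$ on the $j$-th column, the $(n-1)$-terms in the Browder degrees), controlling the Cartan correction terms through Proposition~\ref{prop:generated-alg}, and above all converting the qualitative ``gap'' into the precise numerical bound, where the count of $\stp^{p^k}$-factors furnished by Lemma~\ref{subalg} does the essential work.
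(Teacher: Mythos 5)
Your overall strategy is the paper's: produce classes $a,b,c$ from $x\otimes t^{p^k}, x\otimes t^{p^{k+1}}, x\otimes t^{p^{k+2}}$ in the $(-1)$st column, show $\stp^{p^k}(\alpha\cup\beta^{p-1})=\beta^p=\stp^i(\gamma)\neq 0$ by ruling out differentials into the $(-1)$st column, route $\alpha\cup\beta^{p-1}$ through the class represented by $\dlc^0(a)$ via the Nishida relations, and derive a contradiction from Lemma~\ref{subalg} together with gap estimates read off from Proposition~\ref{prop:generated-alg}. But there is a genuine gap in your setup: you run the spectral sequence for $n$ on $X$ itself, whereas the paper first replaces $X$ by $X'=X/X_{2i+n+2p^k-1}$ and then works with $Y=\Sigma^{2i+1}X'$ and $n'=n+2i+1$. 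These two modifications are not bookkeeping conveniences; several steps you defer to ``bookkeeping'' actually fail without them. First, the permanence of $\dlc^0(a)$ rests on Corollary~\ref{cycl}, which (via Proposition~\ref{prop:diff-browder}) applies only to the spectral sequence of a \emph{suspension}; for a general $X$ you have no argument that $\dlc^0(a)$ survives, and hence no class $\delta$ to feed into Lemma~\ref{subalg}. Second, in showing $\stp^i(\gamma)\neq 0$ one must exclude killers of $\stp^i\cdot c$ represented by Dyer--Lashof operations applied to Browder operations; the decisive lower bound there contains the term $p(n'-1)=p(n+2i)$, and the inequality closes precisely because of the extra $2i+1$ loops --- with only $p(n-1)$ it does not. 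Third, all of your gap estimates need the bottom of the $N_0$-summand to sit at $2i+2p^k$ rather than $\ell+2p^k$, which is exactly what collapsing the $(2i+n+2p^k-1)$-skeleton buys; without it the lower bounds $2ui+(v+w)\ell+\cdots$ degrade to $(u+v+w)\ell+\cdots$ and the claimed gaps need not exist for the stated numerical hypothesis.

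A smaller but real imprecision: you introduce $e$ detecting $\bar a\star\bar b^{\star(p-1)}$ and then speak of ``the low-filtration class $e'$ built from $a$ that produces $b^p$'' without identifying it. The paper's argument requires specifically $\delta$ representing $\dlc^0(a)$, the identity $\stp^{(p-1)p^k}(\delta)=-\alpha\cup\beta^{p-1}$ from Proposition~\ref{nish} (whose error terms vanish only after checking that the relevant $\dlc^{r}$-indices exceed $(n-1)(p-1)$ and that all but one Pontryagin summand dies by instability or gaps), and the vanishing of $F^{-(p-1)}$ in the relevant degree so that the $E_\infty$-identification lifts to actual cohomology. None of this is automatic from ``$b^p$ is a $p$-th power, hence detected in the $(-1)$st column.'' So the proposal is the right skeleton, but as written the argument does not go through on $\Omega^nX$; it must be carried out on $\Omega^{n+2i+1}\Sigma^{2i+1}X'$.
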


\begin{proof}
  In order to trivialize several differentials which would otherwise become a 
  nuisance later in the proof, 
  let $X^\prime = X / X_{2i+n+2p^k-1}$ be the quotient collapsing the $2i+n+2p^k-1$-skeleton. 
  The proof employs the spectral sequence for $Y = \Sigma^{2i+1} X^\prime$ and 
  $n^\prime := n+2i+1$, which converges to the cohomology of 
  $\Omega^{n+2i+1}\Sigma^{2i+1} X^\prime$ 
  instead of $\Omega^n X^\prime$. Note that the passage from $X$ to $X^\prime$ appears already 
  in \cite{Kuhn:nonrealization}. 
  The projection $X \rightarrow X^\prime$ induces a surjection 
  $\varphi\co H^j(X^\prime) \longrightarrow   H^j(X)$ for $j=2i+n+2p^k$ and an 
  isomorphism for $j>2i+n+2p^k$.
  Let $N_j = \varphi^{-1}\bigl(M \otimes (\FF_p \cdot t^{p^{k+j}})\bigr)$ for $j\in \{0,1,2\}$ and 
  $N = \bigoplus_{j=0}^2 N_j$. Then $\shift{n}N^{\ast} =   H^\ast(Y)$ and 
  $\lvert N_0\rvert \subseteq 
  [2i+2p^k,m+2p^k]$, $\lvert N_j\rvert \subseteq [\ell+2p^{k+j},m+2p^{k+j}]$ for $j\in \{1,2\}$. 
  Let $a,b,c \in N$ be elements with $\varphi(a) = x \otimes t^{p^k}$, 
  $\varphi(b) = x \otimes t^{p^{k+1}}$ and $\varphi(c) = x \otimes t^{p^{k+2}}$. 
  Note that $b$ and $c$ are uniquely determined, because 
  $\varphi$ is an isomorphism in high degrees. 
  Suppose that 
  \begin{equation}\label{eq:ineq}
    2p^k > (p^2-1)m + p(m-\ell) + (p^2-2)n+1.
  \end{equation}
  Then in the spectral sequence for $Y$ and 
  $n^\prime$ 
  \[ E_1^{-r,\ast} =\shift{r}H^{\ast}(D_{n^\prime ,r} \Sigma^{-n^\prime}\Sigma^\infty Y) \iso
  \shift{r}H^{\ast}(D_{n^\prime ,r} \Sigma^{-n}\Sigma^\infty X^\prime )\]
  hence every element in the $-1$st column
  \[ E_1^{-1,\ast} = \shift{1-n}   H^{\ast}(X^\prime ) = \shift{1} N \]
  is a permanent cycle, since the $0$-column consists of a $\FF_p$ in degree $0$ only. 
  Thus there are (unique) elements 
  \begin{equation}\label{eq:alphabetagamma}
    \alpha \in H^{2i+2p^k}(\Omega^{n^\prime } Y),\, \beta \in H^{2i+2p^{k+1}}(\Omega^{n^\prime } Y),\,\gamma \in H^{2i+2p^{k+2}}(\Omega^{n^\prime } Y)
   \end{equation}
  representing $a \in E_\infty^{-1, 2i+2p^k+1}, b \in E_\infty^{-1,2i+2p^{k+1}+1}$ and 
  $c \in E_\infty^{-1,2i+2p^{k+2}+1}$ respectively. 
  Before proceeding to the actual argument, observe that by Proposition~\ref{prop:generated-alg}
  \[ E_1^{-r,\ast} = \sum_{\substack{(u,v,w) \in \N^3 \\ u+v+w = r}} \shift{r}\bigl(N_0^u \cdot N_1^v \cdot N_2^w\bigr)\]
  where $N_0^u \cdot N_1^v \cdot N_2^w \subseteq 
  H^\ast(D_{n^\prime ,u+v+w} \Sigma^{-n^\prime }\Sigma^\infty Y)$ is the $\FF_p$-vector space 
  generated by 'products' with $u$ factors coming from $N_0$, $v$ from $N_1$ and $w$ from $N_2$.
  Here 'products' refers to Pontryagin products, Dyer-Lashof operations (which count as $p$-fold 'products')
  and Browder operations with $s$ inputs (which count as $s$-fold 'products'). Then
  \begin{align*}
    \lvert N_0^u \cdot N_1^v \cdot N_2^w\rvert & \subseteq [2ui+(v+w)\ell +2(u+vp+wp^2)p^k, \\
      & (u+v+w)m +2(u+vp+wp^2)p^k + (u+v+w-1)(n^\prime -1)]
  \end{align*}
  with the lower bound coming from true products and the upper bound stemming from pure Browder operations.
  This concludes the preliminaries of the proof. The aim is to show that
  the element $\stp^{p^k} (\alpha\cup \beta^{p-1})$ is both zero and non-zero under the 
  assumption~(\ref{eq:ineq}). This requires a few auxiliary computations, listed
  in equations~(\ref{eq:power-alphabeta}), (\ref{eq:power-alphabetap-1}) and~(\ref{eq:power-gamma}).
  \begin{equation}\label{eq:power-alphabeta}
    \stp^{p^k}(\alpha) = \beta \quad \mathrm{and} \quad \stp^{p^{k+1}}(\beta) = \gamma
  \end{equation}
  The equations~(\ref{eq:power-alphabeta}) hold for
  $a,b,c\in E^{-1,\ast}_1$ by the
  same computation
  as in the proof of Proposition~\ref{prop:neqn}. 
  They carry over to cohomology, because the lower filtrations (i.e. the $0$-column) 
  are trivial away from degree $0$.
  \begin{equation}\label{eq:power-alphabetap-1}
    \stp^{p^k}(\alpha \cup \beta^{p-1}) = \beta^p
  \end{equation}
  In fact, the equation
  \begin{equation*}
    \stp^{p^k}(\alpha \cup \beta^{p-1})  =  
    \sum_{j_1 + \dotsm +  j_p = p^k} \stp^{j_1}(\alpha) \cup  \stp^{j_2}(\beta) \cup \dotsm 
    \cup \stp^{j_p}(\beta)
  \end{equation*}
  simplifies to the expression $ \stp^{p^k}(\alpha\cup \beta^{p-1})= \stp^{p^k}(\alpha) \cup \beta^{p-1}$
  which implies equation~(\ref{eq:power-alphabetap-1}) by~(\ref{eq:power-alphabeta}).
  One may simplify, because $\stp^{j_1} \cdot a = 0$ once $p^k > j_1 > i$, and
  because in the case $j_1 \leq i$ there exists $2\leq r\leq p$ with $j_r \geq \frac{p^k-i}{p-1}$,
  which in turn 
  implies that $\stp^{j_r} \cdot b$ is in the gap between 
  $N_1$ and $N_2$ by the numerical assumption~(\ref{eq:ineq}). The conclusion
  for $\alpha$ and $\beta$ follows as in the case of equation~(\ref{eq:power-alphabeta}).
  \begin{equation}\label{eq:power-gamma}
    \stp^i(\gamma) = \beta^p
  \end{equation}
  Here the calculation from the proof of Proposition~\ref{prop:unstable-algebra} translates 
  immediately to
  $\stp^{p^{k+1}+i}\cdot b = \stp^i\cdot c$, which extends to
  $\beta^p=\stp^{p^{k+1}+i}(\beta) = \stp^i(\gamma)$ as before. 
  The next calculation, formulated as a lemma, is the first step significantly different from the easy case. A 
  priori $\stp^i\cdot c \neq 0$ does not imply $\stp^i(\gamma) \neq 0$, because 
  $\stp^i \cdot c$ might lie in the image of some differential. 
  In fact, passing from $X$ to $Y$ is needed precisely to ensure this is not the case.

  \begin{Lemma}\label{lem:power-gamma-nonzero}
    The element $\stp^i(\gamma)$ is nonzero.
  \end{Lemma}

  \begin{proof}
    The computation from the proof of proposition~\ref{prop:neqn}
    and equation~(\ref{eq:power-gamma})
    shows $\stp^i(c) \neq 0$ as before. Thus the claim holds if 
    $\stp^i \cdot c \in E^{-1,2pi + 2p^{k+2}+1}$ is not hit by any differential
    \[ d_r\co E_r^{-(r+1), 2pi+2p^{k+2}+r} \longrightarrow E_r^{-1,2pi + 2p^{k+2}+1}.\]
    As $E_r^{-(r+1), 2pi+2p^{k+2}+r}$ is a subquotient of 
    \[E_1^{-(r+1), 2pi+2p^{k+2}+r} = H^{2pi+2p^{k+2}-1}(D_{n^\prime ,r+1}\Sigma^{-n}\Sigma^\infty X^\prime )\]
    and $D_{n^\prime ,r+1}\Sigma^{-n}\Sigma^\infty X^\prime $ 
    is $(r+1)(2i+2p^k)-1$-connected, this group is trivial once $r\geq p^2-1$.
    If $r< p^2-1$,
    \[ H^\ast(D_{n^\prime ,r+1}\Sigma^{-n}\Sigma^\infty X^\prime ) = \sum_{u+v+w = r+1} 
    N_0^u \cdot N_1^v \cdot N_2^w\]
    with no iterated Dyer-Lashof operations appearing. If 
    furthermore the inequality $u+vp+wp^2 < p^2$ holds, the top degree of $N_0^u \cdot N_1^v \cdot N_2^w$ is too low:
    \begin{align*}
      (u+v+w)m &+ 2(u+vp+wp^2)p^k + (u+v+w-1)(n^\prime -1) \\
      & \leq (p^2-1)m + 2(p^2-1)p^k+(p^2-2)(n+2i) \\
      & \leq  2p^{k+2} - 2p^k + (p^2+p-1)m + (p^2-2)n \\
      & < 2pi + 2p^{k+2}-1 
    \end{align*}
    Here the last step is precisely the numerical assumption (\ref{eq:ineq}).
    The case $u+vp+wp^2 \geq p^2$ is a little more complicated, 
    as different algebra generators do not hit $\stp^i\cdot c$
    for different reasons. 
    Elements in $E_r^{-(r+1), 2pi+2p^{k+2}+r}$ represented by (sums of) 
    products of Browder operations and elements from the $-1$-column are 
    permanent cycles by Proposition~\ref{prop:diff-browder} 
    and the fact that products of cycles are again cycles. 
    A potential killer of $\stp^i \cdot c$ therefore has to be represented by a 
    sum of products with some Dyer-Lashof operation appearing. 
    If $y \in E_1^{-s,s'}$ is the input for this operation, then
    $1 \leq s < p$ by the above discussion. Furthermore, Proposition~\ref{difq} rules 
    out $s=1$. By 
    Proposition~\ref{prop:generated-alg} it remains to consider classes 
    represented by sums of products with at least one Dyer-Lashof operation 
    applied to a Browder operation appearing. For such an element the lower bound may 
    be sharpened slightly: its degree in $N_0^u \cdot N_1^v \cdot N_2^w$ 
    is 
    \[ \geq 2ui + (v+w)l + (u+vp+wp^2)p^k + p(n^\prime -1)\] 
    because a 
    Browder operation with $s$ inputs raises degree by $(s-1)(n^\prime -1)$ in comparison to a 
    simple product, and then applying a Dyer-Lashof operation adds at least 
    $p(n^\prime -1)$. 
    This sharpened bound in the case $u+vp+wp^2 \geq p^2$ implies
    \begin{align*}
      2ui + (v+w)l + (u+vp+wp^2)p^k + p(n^\prime -1) \ \geq& \ l + 2p^{k+2} + p(n+2i) \\
      \ >& \ 2pi + 2p^{k+2} - 1
    \end{align*}
    which was the reason for suspending the space $2i+1$ times. 
    With all the differentials covered, $\stp^i(\gamma)$ is nonzero as claimed.
  \end{proof}

  Equations~(\ref{eq:power-alphabetap-1}), (\ref{eq:power-gamma}) and
  Lemma~\ref{lem:power-gamma-nonzero} imply that
  \begin{equation}\label{eq:nonzero}
    \stp^{p^k}\bigl(\alpha\cup \beta^{p-1}\bigr) \neq 0.
  \end{equation}

  However, in the current situation there is no obvious reason for $\alpha \cup \beta^{p-1}$ to be zero. 
  The gap into which one might hope a product of this type would fall is filled by its corresponding 
  Pontryagin product, at least on the $E_1$-page. However, the element 
  $\dlc^0(a) \in E_1^{-p,2pi+2p^{k+1}+p}$ is a permanent cycle by Corollary~\ref{cycl}. 
  Hence there is a class $\delta \in H^{2pi+2p^{k+1}}(\Omega^{n^\prime } Y)$ 
  representing $\dlc^0(a) \in E_\infty^{-p,2pi+2p^{k+1}+p}$ with the following property.

  \begin{Lemma}\label{lem:power-delta}
    The equation $\stp^{(p-1)p^k}(\delta)= -\alpha \cup \beta^{p-1}$ holds.
  \end{Lemma}

  \begin{proof}
    The first step is a calculation in the spectral sequence. The equality 
    \begin{align*}
      \stp^{(p-1)p^k}  \dlc^0 (a) &=  \sum_{j=0}^{(p-1)p^{k-1}} a^{0,(p-1)p^k,\lvert a\rvert }(j) 
        c^{0,(p-1)p^k,\lvert a\rvert }(j) \dlc^{2((p-1)p^k-pj)(p-1)}\stp^j(a)\\
      &+ \!\!\!\!\sum_{j=0}^{(p-1)p^{k-1}-1}\!\!\!\!\!\!b^{0,(p-1)p^k,\lvert a\rvert}(j)d^{0,(p-1)p^k,\lvert a\rvert}(j)  
        \dlc^{2((p-1)p^k-pj)(p-1)-p}\stp^j\beta (a) \\
      &+ \sum_{t \in N} \frac{1}{e_t}\stp^{t_1}(a) \star \cdots \star \stp^{t_p}(a)
    \end{align*}
    holds by the Nishida relations, Proposition~\ref{nish} (we apologize for the two-fold use of 
    the letters '$a$' and '$\beta$'). In the first and second summands, 
    $\stp^j\cdot a$ and $\stp^j\beta \cdot a$ are zero if $j > i$ by instability. 
    However, if $j\leq i$, then the index for the Dyer-Lashof operation exceeds $(n-1)(p-1)$, 
    so both summands vanish entirely. 
    The third sum reduces to the term indexed by $t := (0,p^k,...,p^k)$. In fact, if $t_2 \leq i$, 
    then $t_j>i+p^k$ for some $j$ with $2<j\leq p$, which in turn
    yields zero in the product by instability. If $i< t_2< p^k$, then $\stp^{t_2}(a)=0$, whence 
    the only element in $N$ that contributes to the sum nontrivially 
    is $(0,p^k,...,p^k)$. Thus 
    \[ \stp^{(p-1)p^k} \dlc^0 (a) = - a \ast b^{p-1}\]
    and both $\stp^{(p-1)p^k}(\delta)$ and $-\alpha \cup \beta^{p-1}$ 
    represent the same element in $E_\infty^{-p,q+p} = F^{-p}_{q}/F^{-(p-1)}_{q}$, where 
    $q := 2pi + 2p^k - 2p^{k+1}+2p^{k+2}$. Proving $F^{-(p-1)}_{q} = 0$
    it is equivalent to proving
    $E_\infty^{-r,q+r} = 0$ for all $0 \leq r < p$. However, for $0\leq r <p$, the group
    $E_1^{-r,q+r}$ lies in a gap by the following argument.
    If $w=0$, then the top degree of $N_0^u \cdot N_1^v \cdot N_2^w$ is too low,
    because
    \begin{align*}
      (u+v+w)m &+ 2(u+vp+wp^2)p^k +(u+v+w-1)(n^\prime -1) \\
       &\leq  (p-1)m + 2(p-1)p^{k+1} + (p-2)(n+2i) \\
       &\leq  2(p-1)p^{k+1} + pm-\ell+(p-2)n \\
      & <  p\ell + 2p^k + 2(p-1)p^{k+1} \leq  q.
    \end{align*}
    If $w\neq 0$, its bottom degree is too high, because 
    $2(p-1)p^k > m-\ell$ and
    \begin{align*}
      2ui+(v+w)\ell+2(u+pv+p^2w)p^k          &         \geq  \ell + 2p^{k+2} \\
      & >  m - 2(p-1)p^k + 2p^{k+2} \geq  q.
    \end{align*} 
    It follows that  $F^{-(p-1)}_{q} = 0$.
    Therefore $\stp^{(p-1)p^k}(\delta)$ and $-\alpha \cup \beta^{p-1}$ coincide 
    in $F^{-p}_{q} \subseteq   H^q(\Omega^{n^\prime } Y)$.
  \end{proof}

  Introducing this element $\delta$ goes back to
  Schwartz' paper \cite{Schwartz:conjecture}.
  The inequality~(\ref{eq:nonzero}) and Lemma~\ref{lem:power-delta} combine to give 
  $\stp^{p^k}\stp^{(p-1)p^k}(\delta) \neq 0$. However, the operation 
  $\stp^{p^k} \stp^{(p-1)p^k}$ can be decomposed using the Adem relations in the form of 
  Lemma~\ref{subalg}, which then implies the following computation.

  \begin{Lemma}\label{lem:17}
    The equation $\stp^{p^k}\stp^{(p-1)p^k}(\delta) = 0$ holds.
  \end{Lemma}

  \begin{proof}
    The idea is to find $p$ gaps in the filtration $F^{-p}$ which are large enough. 
    Unfortunately, there are only two obvious such gaps, but $\stp^{p^k}\stp^{(p-1)p^k}(\delta)$ 
    is shown to have nontrivial contribution outside $F^{-(p-1)}$ before performing its final 'jump'. 
    In $F^{-p}/F^{-(p-1)}$ we then find the required $p-2$ additional gaps. To this end define:
    \begin{align*}
      V_0  & =  \sum_{\substack{(u,v) \in \N^2, u+v\leq p \\ (0,p) \neq (u,v) \neq (1,p-1)}} N_0^u \cdot N_1^v \\
      V_1  & =  N_0 \cdot N_1^{p-1} \\
      V_2  & =  N_1^p + \sum_{\substack{(u,v,w) \in \N^3 \\ u+v+w\leq p,w\neq 0}} N_0^u \cdot N_1^v \cdot N_2^w
    \end{align*}
    and recall that shifted versions of these modules assemble to the first $p$ columns of $E_1$ in 
    such a way that
    \[ F_\ast^p = V^\infty_0 + V^\infty_1 + V^\infty_2\]
    where $V^\infty_j$ denote the subquotients of $V_j$ in $E_\infty$. 
    By definition, the degrees of elements are distributed as follows:
    \begin{align*}
      \lvert V_0\rvert & \subseteq  [2i+2p^k, (p-1)m+2(p-1)p^{k+1}+(p-2)(n^\prime -1)] \\
      \lvert V_1\rvert &\subseteq [2i\!+\!(p-1)\ell +\!(p^2-p+1)2p^{k}, pm+\!(p^2-p+1)2p^{k}\!+\!(p-1)(n^\prime -1)] \\
      \lvert V_2\rvert & \subseteq [\ell+2p^{k+2},pm+2p^{k+3}+(p-1)(n^\prime -1)]
    \end{align*}
    Hence both between $V_0$ and $V_1$ and between $V_1$ and $V_2$ there is 
    a gap spanning at least $2(p-1)p^{k-1}$. Moreover, the distance between $V_0$ and $V_2$ is 
    strictly larger than $2(p-1)p^k$.
    \begin{center}
    \begin{tikzpicture}[scale=1,font=\scriptsize,line width=1pt,>=stealth]
      \filldraw[fill=black!20!white,draw=white,decorate,decoration={random steps,segment length=13pt,amplitude=6pt}] (1,0) -- (3,0) -- (3,6.5) -- (1,6.5) -- cycle;
      \filldraw[fill=black!20!white,draw=white,decorate,decoration={random steps,segment length=4pt,amplitude=2pt}] (4,4.5) -- (6,4.5) -- (6,6.5) -- (4,6.5) -- cycle;
      \filldraw[fill=black!20!white,draw=white,decorate,decoration={random steps,segment length=10pt,amplitude=5pt}] (7,0) -- (9.5,0) -- (9.5,6.5) -- (7,6.5) -- cycle ;
      \draw[black] (2,6.7) node {$V_0$};
      \draw[black] (5,6.7) node {$V_1$};
      \draw[black] (8,6.7) node {$V_2$};
      \draw[->] (0,0) -- (0,7) node[above] {filtration};
      \draw[->] (0,0) -- (10,0) node[right] {degree};
      
      \draw (0.1,1) -- (-.1,1) node[left] {$-1$};
      \draw[thin] (0.1,1) -- (9.5,1) ;
      \draw (0.1,4) -- (-.1,4) node[left] {$-p+1$};
      \draw[thin] (0.1,4) -- (9.5,4) ;
      \draw (0.1,5) -- (-.1,5) node[left] {$-p$};
      \draw[thin] (0.1,5) -- (9.5,5) ;
      \draw (0.1,6) -- (-.1,6) node[left] {$-p-1$};
      \draw[thin] (0.1,6) -- (9.5,6) ;

      \draw[fill] (1.3,5) circle (2pt) node[below=5pt,left=1pt] {$\delta$};
      \draw[fill] (5,5) circle (2pt) node[below] {$-\alpha\cup \beta^{p-1}$};
      \draw[fill] (8,1) circle (2pt) node[below=5pt,left=1pt] {$\beta^p$};

      \draw[->] (1.3,5) .. controls (3.3,6.5) and (4,7) .. (5,5) node[midway,above=1pt] {$\stp^{(p-1)p^k}$};
      \draw[->] (5,5) .. controls (6,6) and (7,4) .. (8,1) node[midway,right] {$\stp^{p^k}$};
    \end{tikzpicture}
    \end{center}

    Lemma \ref{subalg} provides a decomposition of the form 
    \[ \stp^{p^k}\stp^{(p-1)p^k} = \sum_{j=1}^r  A_{j1} \stp^{p^k} A_{j2}\stp^{p^k}\dotsm A_{je_j} \]
    with $A_{jj^\prime}\in \mathcal{A}_p(k-1)$ and $0\leq e_j\leq p$ for all $1\leq j\leq r$.
    If $\stp^{p^k} \stp^{(p-1)p^k}(\delta) \neq 0$, there exists $1\leq j\leq r$ 
    such that $\bigl(A_{j1} \stp^{p^k} A_{j2}\stp^{p^k}\dotsm A_{je_j}\bigr)(\delta) \neq 0$.
    Expressing the $A_{jj^\prime}$ via indecomposables yields (as a summand)
    a product
    \begin{equation}\label{eq:jump} \stp^{p^{\nu_1}}\dotsm \stp^{p^{\nu_t}} \quad \mathrm{with} 
       \quad 1\leq \nu_j\leq k \quad \mathrm{and}\quad 
       \bigl(\stp^{p^{\nu_1}}\dotsm \stp^{p^{\nu_t}}\bigr)(\delta)\neq 0 
    \end{equation}
    where $\nu_j=k$ for at most $p-1$ indices.
    The gap between $V_0$ and $V_2$ is too large to be passed by any single $\stp^{p^{\nu_j}}$.
    Since $\delta \in \bigl(N_0^p\bigr)^\infty \subseteq V^\infty_0$ (or rather a subquotient of $N_0$) and 
    $\stp^{p^k}\stp^{(p-1)p^k}(\delta)=\beta^p \in \bigl(N_1^p\bigr)^\infty \subseteq V^\infty_2$, there exists a
    number $1\leq s\leq t$ such that 
    $\bigl(\stp^{p^{\nu_s}}\dotsm \stp^{p^{\nu_t}}\bigr)(\delta)\in V_1$.
    The only indecomposable 
    operation appearing in~(\ref{eq:jump}) which can pass the gap between $V_1$ and $V_2$ 
    is $\stp^{p^k}$. Hence there exists $1\leq r<s$ with $\nu_r = k$. 
    In particular, $\nu_j=k$ for at most $p-2$ indices $j\geq s$.
    The same degree calculation appearing in the proof of Lemma~\ref{lem:power-delta} yields $V^\infty_1 \cap F^{-(p-1)} = \{0\}$
    on the infinity page, so 
    \[\bigl(\stp^{p^{\nu_s}}\dotsm \stp^{p^{\nu_t}}\bigr)(\delta) \neq 0 \quad \text{in} 
    \quad F^{-p}/F^{-(p-1)}.\]
    However, the image of $V_0 \oplus V_1$ in $F^{-p}/F^{-(p-1)}$ is a subquotient of 
    the module 
    \[ N_0^p \oplus \Bigl(N_0^{p-1}\cdot N_1\Bigr) \oplus 
    \Bigl(N_0^{p-2}\cdot N_1^2\Bigr) \oplus\dotsm 
     \oplus \Bigl(N_0\cdot N_1^{p-1}\Bigr) \]
    which contains at least $(p-1)$ gaps. More precisely, between any 
    $N_0^u \cdot N_1^v$ and $N_0^{u-1} \cdot N_1^{v+1}$ there is a gap spanning at least 
    $2(p-1)p^{k-1}$. Similarly, the distance between 
    $N_0^u \cdot N_1^v$ and $N_0^{u-2} \cdot N_1^{v+2}$ spans more than $2(p-1)p^k$.
    \begin{center}
    \begin{tikzpicture}[scale=1,font=\scriptsize,line width=1pt,>=stealth]
      \filldraw[fill=black!40!white,draw=white,decorate,decoration={random steps,segment length=13pt,amplitude=4pt}] (0.5,0) -- (1,0) -- (1,2) -- (0.5,2) -- cycle;
      \draw (0.75,2.3) node {$N_0^p$};
      \filldraw[fill=black!40!white,draw=white,decorate,decoration={random steps,segment length=13pt,amplitude=4pt}] (3,0) -- (3.5,0) -- (3.5,2) -- (3,2) -- cycle;
      \draw (3.25,2.3) node {$N_0^{p-1}\cdot N_1$};
      \filldraw[fill=black!40!white,draw=white,decorate,decoration={random steps,segment length=13pt,amplitude=4pt}] (5.5,0) -- (6,0) -- (6,2) -- (5.5,2) -- cycle;
      \draw (5.75,2.3) node {$N_0^{p-2}\cdot N_1^2$};
      \filldraw[fill=black!40!white,draw=white,decorate,decoration={random steps,segment length=13pt,amplitude=4pt}] (9.5,0) -- (10,0) -- (10,2) -- (9.5,2) -- cycle;
      \draw (9.75,2.3) node {$N_0\cdot N_1^{p-1}$};

      \draw (7.5,2.3) node {$\dotsm$};
      \draw[->] (0,0) -- (0,2.5) node[above] {subquotient of};
      \draw[->] (0,0) -- (11,0) node[right] {degree};
      \draw[<->,thin] (1.2,1) -- (2.8,1)  node[midway,below] {$2(p-1)p^{k-1}$};
      \draw[<->,thin] (3.7,1) -- (5.3,1)node[midway,below] {$ 2(p-1)p^{k-1}$};
      \draw[<->,thin] (1.2,-0.2)--(5.3,-0.2) node[midway,below]{$2(p-1)p^{k}$};
      \draw[<-,thin] (6.2,1) -- (7.4,1)  node[midway,below] {$2(p-1)p^{k-1}$};
      \draw[->,thin] (8.1,1) -- (9.3,1)  node[midway,below] {$2(p-1)p^{k-1}$};
      \draw (7.75,1) node {$\dotsm$};
      \draw[fill] (0.7,1.5) circle (1pt) node[below=5pt,left=1pt] {$\delta$};
    \end{tikzpicture}
    \end{center}

    Since no indecomposable operation in~(\ref{eq:jump})
    can pass across two gaps at once, the product 
    $\bigl(\stp^{p^{\nu_s}}\dotsm \stp^{p^{\nu_t}}\bigr)(\delta)$ 
    has to factor through a trivial module 
    in $F^{-p}/F^{-(p-1)}$. It follows that $\stp^{p^k}\stp^{(p-1)p^k}(\delta) = 0$.
  \end{proof}

  The inequality~(\ref{eq:nonzero}), Lemma~\ref{lem:power-delta} and Lemma~\ref{lem:17}
  provide that the element
  $\stp^{p^k}\stp^{(p-1)p^k}(\delta) = - \stp^k(\alpha \cup \beta^{p-1})$ is both 
  zero and nonzero. Hence the assumption~(\ref{eq:ineq}) is wrong, which proves
  Theorem~\ref{thm:nneq}.
\end{proof}

The theorem as stated in the beginning can now be obtained by a simple reindexing using some obvious estimates. 
Note that $M$ has a desuspension class of even origin 
if the Bockstein acts trivially on $M$. 
Following Kuhn's observations in \cite{Kuhn:realizing}, we recover a theorem of his:

\begin{Theorem}\label{thm:finite-bockstein}
  If $H^\ast(X)$ is finitely generated over the 
  Steenrod algebra $\mathcal{A}_p$ and 
  the Bockstein acts trivially on it in high degrees, then $H^\ast(X)$ is finite.
\end{Theorem}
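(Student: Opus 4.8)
The plan is to argue by contradiction, feeding a hypothetical counterexample into the nonrealization result just proved. Suppose that $H^\ast(X)$ is finitely generated over $\mathcal{A}_p$, that the Bockstein acts trivially on $H^\ast(X)$ in all degrees above some bound $d_0$, and --- contrary to the assertion --- that $H^\ast(X)$ is infinite-dimensional; the aim is to contradict Theorem~\ref{thm:main}.

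The first and substantial step is to invoke the reduction of Kuhn's Conjecture~\ref{conj:kuhn} to a realization question for modules of the shape $M\otimes\Phi(k,k+2)$, in the form going back to Kuhn's use of Lannes' $T$-functor in \cite{Kuhn:realizing} and to Schwartz \cite{Schwartz:conjecture} (a readable account for $p=2$ appears at the end of \cite{Kuhn:nonrealization}). From the hypothetical $X$ one extracts a \emph{finite} unstable $\mathcal{A}_p$-module $M$, concentrated in some fixed range of degrees $[\ell,m]$ that depends only on $H^\ast(X)$, on which the Bockstein acts trivially, together with --- for every sufficiently large $k$ --- a pointed space $Y_k$ with
\[ \widetilde H^\ast(Y_k) \cong \bigl(\shift{n}M\bigr)\otimes\Phi(k,k+2) \]
as $\mathcal{A}_p$-modules, where $n$ is the desuspension index of $M$. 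The finiteness of $M$ and the bound on its degrees are exactly where the finite generation of $H^\ast(X)$ is used, whereas the triviality of the Bockstein on $M$ --- arranged by letting the whole construction live above degree $d_0$ --- is where the hypothesis on $X$ enters, and this is what confines the method to those cases of Kuhn's conjecture reached by Theorem~\ref{thm:main} (roughly half of them).

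Granting this reduction, the rest is formal. Every nonzero unstable module contains a desuspension class (Definition~\ref{def:des-class}), and a desuspension class of odd origin is witnessed by a nonzero element of the form $\beta\stp^{j}\cdot x$; hence the vanishing of $\beta$ on $M$ forces $M$ to contain a desuspension class of \emph{even} origin. Theorem~\ref{thm:main}, applied with $\shift{n}M$ in place of $M$, then yields for every large $k$ an inequality bounding $2p^k$ from above by a quantity built only from $\ell$, $m$ and $n$ --- in particular independent of $k$. Choosing $k$ large enough produces a contradiction, so $H^\ast(X)$ must be finite-dimensional; the input supplied in Kuhn's original argument by the mod $p$ Hopf invariant one and Kervaire invariant problems is here taken over by Theorem~\ref{thm:main}. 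I expect the only genuine obstacle to be the reduction of the second paragraph: one has to run Kuhn's $T$-functor construction (or its Goodwillie-tower reformulation) carefully enough to produce the spaces $Y_k$ while keeping $M$ finite, confined to a fixed degree window, and situated above degree $d_0$ all at once. Everything downstream is bookkeeping once Theorem~\ref{thm:main} is in hand.
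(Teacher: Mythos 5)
Your proposal is correct and follows essentially the same route as the paper: reduce via Lannes' $T$-functor and the Kuhn--Schwartz structure theorems (with Winstead's result guaranteeing the Bockstein still vanishes on $M$) to a space realizing $M\otimes\Phi(k,k+2)$ for arbitrarily large $k$, observe that trivial Bockstein action forces a desuspension class of even origin, and contradict the nonrealization theorem since its bound on $2p^k$ is independent of $k$. The paper likewise treats the reduction as a citation (producing one space $Y$ with $H^\ast(Y)\cong M\otimes\Phi(k,\infty)$ in large degrees and then passing to subquotient spaces $Z$), so your flagged ``obstacle'' is handled exactly as you anticipate.
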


\begin{proof}  The proof proceeds as in \cite{Kuhn:nonrealization}. 
  Let $\bar{T}$ be the reduced Lannes functor and 
  $\Delta (X)=\mathrm{Map}(B\Z/p,X)/X$ where $X$ is embedded as the space
  of constant maps. Iterated applications of 
  $\Delta$  produce a space $Y$ which still has finitely generated, 
  but infinite dimensional, cohomology and whose cohomology satisfies 
  $\bar{T}^2 \bigl(H^\ast (Y)\bigr)=0$. Structure theorems of Kuhn and Schwartz 
  imply that in large degrees, $H^\ast (Y)$ is isomorphic to the module 
  $M\otimes \Phi (k,\infty )$ with $M\cong \bar{T}H^\ast (Y)$. 
  By a result of Winstead recorded in 
  \cite[Theorem 1.3]{Kuhn:realizing}, the Bockstein still acts trivial on 
  $M$ in large degrees. For large $k$ one can produce a subquotient space $Z$ with 
  $H^\ast (Z)\cong M\otimes \Phi (k,k+2)$, which contradicts Theorem~\ref{thm:nneq}.
\end{proof}

The case of a desuspension class of odd origin remains unsolved.
The operation $\beta \stp^i $ has an interpretation in terms of $p$-fold 
symmetric Massey products \[\beta \stp^i (x)=\langle x,\dots ,x\rangle_s\]  
by work of Kraines \cite{Kraines}. A similar formula holds for 
$Q_{p-2}$ by a result of Kochman \cite{Kochman}. 
Consequently, one may try to replace $p$-fold cup products 
in the arguments for the even case by 
$p$-fold Massey products. This attempt 
works if the module $M$ does not desuspend at all 
(the case $n=0$), via a Cartan formula for Massey products. 
If $n>0$, one may try to replace $\dlc^0$ by $\dlc^{p-2}$
and use the interpretation in terms of Massey products mentioned above. 
Currently we do not know how to make this work.
Another strategy is to intoduce an extra looping to reduce to the even case.
This strategy already appears in the $K$-theoretical Adams-Atiyah proof of the Hopf
invariant one theorem at odd primes, and it was also used in \cite{Schwartz:conjecture}.
However, in that situation the element $d_{p-1}\dlc^{0}(\alpha)$ is possibly nonzero.
There are several ways to deal with this.
One approach is to bring the projective plane into play, as was done 
in \cite{Schwartz:conjecture}.
As noted in \cite{Schwartz:Errata}, it is unclear whether 
$\dlc^{0}(\alpha)$ still survives in the spectral sequence
for the projective plane.
Another approach
is to switch to the homotopy fibre $F$ of $\beta \stp^i(\alpha)$. 
Here the class corresponding to $\dlc^{0}(\alpha)$
surely survives, but an extension problem 
related to the $H$-deviation of the splitting map for $\Omega F$ as 
a product of
the loop space on  $X$ and the appropriate Eilenberg-MacLane space 
prevents the argument to go through.
In both approaches, one uses the fact that $\beta \stp^i$ loops down to zero.
Harper shows that the problems in both approaches are closely related 
in \cite{Harper:secondary}.

\appendix

\section{Homology of free algebras over $E_n$-operads}
\label{sec:homol-free-algebr}

Our aim in this section is to compute the (co)homology of the 
free algebra on a spectrum $E$ over a suitable $E_n$-algebra. 
The case of an $E_{\infty}$-operad was already treated by McClure in \cite{B.M.M.S.}.
The homology of the free $\mathcal{C}_n$-algebra $\mathcal{C}_n X$ on a space 
$X$  has been computed
by Fred Cohen in \cite{C.L.M.}. More precisely, he defined a category of so called 
allowable $AR_{n-1}\Lambda_{n-1}$-Hopf algebras and constructed  
(among other things) a free functor $W_{n-1}$ from a category of 
coalgebras to this category of allowable 
$AR_{n-1}\Lambda_{n-1}$-Hopf algebras. 
Cohen then proved that the homology $H_{\ast}\mathcal{C}_n X$ of the free 
algebra over the operad of little $n$-cubes on a space $X$ 
is isomorphic to $W_{n-1}H_{\ast}X$ as an $AR_{n-1}\Lambda_{n-1}$-Hopf algebra.
Besides the Hopf algebra structure, the action of the Steenrod reduced powers, 
the Dyer-Lashof  and the Browder operations
are encoded in the $AR_{n-1}\Lambda_{n-1}$-Hopf algebra structure. 
The precise definition
is given on the first few pages of \cite{C.L.M.}.

Building on Cohen's calculation, most of the axioms where shown to hold  
for the homology of
a suitable $E_n$-spectrum (or even an $H_n$-spectrum) by Steinberger in 
\cite{B.M.M.S.}. The part of the structure which was not studied there 
is the comultiplication and its interaction with the operational structure. 
In the papers \cite{A.K.} and \cite{Kuhn:diagonal}, Kuhn
studied a natural coproduct on the homology of a free operad algebra on a 
spectrum, and we rely very much 
on these results. The unstable case of an $E_n$-operad was treated already in 
\cite{C.L.M.}, and Strickland and Turner considered the
$E_{\infty}$ case in \cite{Strickland:symmetric} and \cite{S.T.}.

Let $\mathcal{C}$ be a topological operad augmented over the operad 
$\mathcal{L}$ of linear isometries, let $E$ be a spectrum and 
$E_+ =E\vee \mathbb{S}$, where $\mathbb{S}$ is the sphere spectrum. 
Then the free $\mathcal{C}$-algebra on $E_+$ is defined in \cite{L.M.S.}. 
Let $\ep_k$ denote the $k$-th extended powers with respect to $\mathcal{C}$. Write 
$\Delta \co E_+\to E_+\vee E_+$ for the stable pinch map and consider the 
map $\eta$ given by the composition
\[\mathcal{C}E_+\!\xrightarrow{\mathcal{C}\Delta}\mathcal{C}(E_+\vee E_+)
\simeq \!\!\bigvee\limits_{k=0}^{\infty}\ep_k (E_+\vee E_+)\xrightarrow{\rho}
\!\!\bigvee\limits_{k=0}^{\infty}\bigvee\limits_{i+j=k}\!\ep_i E_+\wedge 
\ep_j E_+ \!
\simeq \mathcal{C}E_+\wedge \mathcal{C}E_+\]
where $\rho$ is induced by the degeneracies of the operad 
(see \cite[p.~12]{May:geometry}). Roughly, it associates to a 
labeled ``configuration'' all partitions of the configuration. 
Following Kuhn, we choose 
\[\psi_\ast\co H_{\ast}\mathcal{C}\Sigma^{\infty}X\to H_{\ast}\mathcal{C}
\Sigma^{\infty}X\otimes H_{\ast}\mathcal{C}\Sigma^{\infty}X\] 
as coproduct. 
The homomorphism induced by $\rho$ on homology can be described by 
suitable transfer homomorphisms. 
It is part of the main result in \cite{Kuhn:diagonal} that, for a co-$H$-space $X$, 
$\psi$
coincides with the usual coproduct. Here the isomorphism 
\[\mathcal{C}\Sigma^{\infty}X\cong \Sigma^{\infty}\mathcal{C}X\]
enters. Since $\mathcal{C}_n$ is not  augmented over the operad $\mathcal{L}$
of linear isometries, the free $\mathcal{C}_n$-algebra $\mathcal{C}_n E$ on a 
spectrum $E$ in the sense of \cite[Chapter VII]{L.M.S.} cannot be formed.
However, a cofibrant model 
$\bar{\mathcal{C}} \xrightarrow{\simeq} \mathcal{C}_n$ in the 
model category of topological operads \cite{B.M.} is augmented over $\mathcal{L}$. 
Alternatively, one can choose the operad $\mathcal{C}_n \times \mathcal{L}$ 
instead.
Moreover, functorial cellular approximation leads to a cellular operad,
still denoted $\bar{\mathcal{C}}$, because it commutes with products.

\begin{Remark}\label{rem:operad-ring-spectra}
  There is a notion of {\em naive\/}  (MIT) $\mathcal{C}$-ring spectra, 
  for which one does not need to assume that the operad 
  $\mathcal{C}$ is augmented over $\mathcal{L}$.
  No operadic internalization of external smash products is needed, 
  instead one uses the
  symmetric monoidal smash product in $S$-modules and 
  the fact that the model category of $S$-modules
  is tensored over topological spaces.
  However, we have to rely on certain results in \cite{L.M.S.}
  and there is no comparison between these two 
  notions of operad ring spectra in the literature as far as we know
  (but see \cite{May:precisely}).
\end{Remark}

We remind the reader how $\ep_{k}\Sigma^d X$ can be viewed as a relative
Thom complex in the case where $X$ is a pointed space. Consider the bundle 
\[p\co \bar{\mathcal{C}}_{k}\times_{\Sigma_k}(\mathbb{R}^d)^k \to 
\bar{\mathcal{C}}_{k}/\Sigma_k = \bar{\mathcal{B}}_{k}\]
and let $p_X$ the pullback along the map
\[ q\co\bar{\mathcal{C}}_k\times_{\Sigma_k}X^k \to \bar{\mathcal{B}}_{k}.\]
Then $\ep_{k}\Sigma^d X$ is the quotient of Thom complexes $T(p_X)/Tp_{\bullet}$,
where $\bullet$ is the base point in $X$.
Extended powers of spectra do not commute with suspension. 
However, there is a Thom isomorphism $\Phi^{-1}$ for a spectrum $E$ and 
$d$ even whose inverse
\[\Phi \co H_{\ast}\ep_{k}\Sigma^{d} E \to H_{\ast}\ep_{k} E \] 
is more convenient for us. In the $E_\infty$-case, such a Thom isomorphism is 
provided in 
\cite[Theorem VII.3.3] {B.M.M.S.} (see also \cite[Chapter VII]{C.L.M.}), 
and the proof generalizes. The homomorphism $\Phi$ is induced by the composite 
\[H_{\ast}\ep_{k}\Sigma^d E\xrightarrow{\delta_{\ast}}
H_{\ast}\ep_{k}S^d \wedge \ep_{k}E \cong 
H_{\ast} \ep_{k}S^d \otimes H_{\ast}\ep_{k}E \xrightarrow{\epsilon\otimes\id}
H_{\ast}\ep_{k}E\] 
where $\delta\co \ep_{k}\Sigma^d E\to \ep_{k}S^d \wedge \ep_{k}E $ 
is induced by the diagonal in $\bar{\mathcal{C}}$ and 
$\epsilon$ is evaluation at the orientation class 
$e_0 \otimes \iota^k\in H^{\ast}\ep_{k}S^d$. Here $\iota$ is the fundamental 
class of $S^d$ and $e_0 =1$ is the image of the bottom class of the standard 
free acyclic $\mathbb{Z}/p$-complex $W$ in 
$H^{0}\bar{\mathcal{C}}_k$ -- see \cite[p.403]{L.M.S.} and 
\cite[p.244-247]{B.M.M.S.} for details. 
The isomorphism $\Phi$ is the colimit of a compatible system of 
isomorphisms $\Phi_s$  of (inverse) Thom isomorphisms for a 
system of bundles defined by the augmentation to $\mathcal{L}$, followed 
by suitable desuspension isomorphisms.
Snaith's splitting assembles these isomorphisms $\Phi$ for various $k$
to an isomorphism 
\[\Phi\co H_{\ast}\mathcal{C}\Sigma^d E_+ \to H_{\ast}\mathcal{C}E_+ .\]

\begin{Lemma}\label{lem:thom-iso-coalg}
  The isomorphism \[\Phi\co H_{\ast}\bar{\mathcal{C}}\Sigma^d E_+ \to 
  H_{\ast}\bar{\mathcal{C}}E_+.\] is compatible with the comultiplication $\psi$.
\end{Lemma}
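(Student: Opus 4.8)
The plan is to verify the stated compatibility, namely that the square with horizontal maps $\Phi$ and $\Phi\otimes\Phi$ and with both verticals equal to $\psi$ commutes, i.e. $(\Phi\otimes\Phi)\circ\psi=\psi\circ\Phi$, by unwinding both composites into the same string of structural maps. Recall that $\psi$ is obtained by applying $H_\ast$ and the K\"unneth map to the map $\eta$, which is built from the stable pinch $\Delta$, Snaith's splitting, and the degeneracy map $\rho$ of the cellular operad $\bar{\mathcal{C}}$; and that on each summand $\ep_k$ the Thom isomorphism $\Phi$ is $\epsilon\otimes\id$ after the K\"unneth map after $\delta_\ast$, where $\delta$ is induced by the diagonal of $\bar{\mathcal{C}}$ and $\epsilon$ is evaluation at the orientation class $e_0\otimes\iota^k\in H^\ast\ep_k S^d$. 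Since $\Delta$, $\rho$ and $\delta$ are all induced by maps of spaces (respectively of the operad $\bar{\mathcal{C}}$), the first step is to produce a strictly commuting diagram of spectra expressing that $\delta$ intertwines $\eta$ with the analogous map together with the diagonal on the sphere factor: the diagonal $\bar{\mathcal{C}}_{k}\to\bar{\mathcal{C}}_{k}\times\bar{\mathcal{C}}_{k}$ is $\Sigma_k$-equivariant and compatible with the equivariant inclusions $\bar{\mathcal{C}}_{k}\subset\bar{\mathcal{C}}_{i}\times\bar{\mathcal{C}}_{j}$ occurring in $\rho$, and $\Delta$ visibly commutes with $\delta$. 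This reduces the claim to two points concerning the $\ep_i S^d\wedge\ep_j S^d$ factors.

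The first point is the interaction of $\delta_\ast$ with the transfer homomorphism that $\rho$ induces on homology: $\delta$ is a map over the relevant base spaces, so the transfer commutes with the induced pullback, which is a direct instance of the naturality (Frobenius reciprocity) of the transfer. The second point is that the orientation class is multiplicative with respect to $\rho$: over the image of $\bar{\mathcal{B}}_{k}=\bar{\mathcal{C}}_{k}/\Sigma_k$ in $\bar{\mathcal{C}}_{i}\times_{\Sigma_i\times\Sigma_j}(\R^d)^k$, the bundle $\bar{\mathcal{C}}_{k}\times_{\Sigma_k}(\R^d)^k$ is the external product of $\bar{\mathcal{C}}_{i}\times_{\Sigma_i}(\R^d)^i$ and $\bar{\mathcal{C}}_{j}\times_{\Sigma_j}(\R^d)^j$, so the Thom class $e_0\otimes\iota^k$ restricts to $(e_0\otimes\iota^i)\otimes(e_0\otimes\iota^j)$; hence evaluation at $e_0\otimes\iota^k$ factors through the splitting exactly as needed. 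Combining these two points with the associativity and naturality of the smash product and of the K\"unneth isomorphism, both composites collapse onto the same map, and the lemma follows.

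The main obstacle is the bookkeeping around the transfer built into $\rho$: one has to be careful that the projection formula relating the transfer, the pullback $\delta_\ast$ and evaluation at the orientation class is applied at the correct stage of the Snaith splitting, i.e. that Kuhn's coproduct genuinely passes through the finite-stage Thom isomorphisms $\Phi_s$ before one takes the colimit defining $\Phi$. As a sanity check, and as an alternative route, one can reduce to the $E_\infty$-case, where the compatibility of this coproduct with the Thom isomorphism is already recorded in \cite{B.M.M.S.} and \cite{Kuhn:diagonal}; nothing in that argument uses $n=\infty$, so it applies verbatim summand by summand of the wedge decomposition.
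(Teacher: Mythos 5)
Your proof is correct and takes essentially the same route as the paper: the paper's entire argument is the one-line observation that the lemma follows from naturality of $\Phi$ applied to the map $\rho\mathcal{C}(\Delta)$, citing \cite[7.1]{Strickland:symmetric} for the pattern, and your two verifications --- the projection formula relating $\delta_\ast$ to the transfer in $\rho$, and the multiplicativity of the orientation class $e_0\otimes\iota^k$ under the inclusion $\bar{\mathcal{C}}_k\subset\bar{\mathcal{C}}_i\times\bar{\mathcal{C}}_j$ --- are exactly the content of that naturality statement. Your closing remark about reducing to the $E_\infty$ case is also consistent with the paper's reliance on the Strickland argument.
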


\begin{proof}
  This follows from naturality of $\Phi$ applied to 
  $\rho\mathcal{C}(\Delta)$, just as in \cite[7.1]{Strickland:symmetric}.
\end{proof}

The next lemma shows that $\Phi$ is also compatible with Browder operations, 
Dyer-Lashof operations and the Pontryagin product. A version of it is 
proved for cyclic powers in 
\cite[Lemma VIII.3.3.]{L.M.S.}. 
The arguments given there generalise once one replaces the 
standard acyclic $\mathbb{Z/}p$ complex $W$ \cite{May:steenrod} 
by the spaces in $\bar{\mathcal{C}}$.
Before we state the lemma, recall that \cite[VIII 2.9.]{L.M.S.} supplies
an isomorphism 
\[C_{\ast}( \ep_{k}E) \cong  C_{\ast}(\bar{\mathcal{C}}_{k})
\otimes_{\Sigma_k}C_{\ast}(E)^{\otimes k} \]
for a cellular operad $\mathcal{C}$ and a 
cellular spectrum $E$,
where $C_{\ast}$ denotes the cellular chain complex. 
The chain complex $C_{\ast}(E)^{\otimes k}$ is 
equivariantly chain homotopy equivalent to $H_{\ast}(E)^{\otimes k}$. 
As a consequence, any class in $H_{\ast}\ep_{k}E$ 
may be represented as 
$[c\otimes x_1 \otimes \dots \otimes x_k]$ with 
$c\in C_{\ast}(\bar{C}_{k})$ and $[x_i]\in H_{\ast}E$.

\begin{Lemma}\label{lem:thom-iso-formula}
  The equality
  \[\Phi \bigl([ c\otimes \sigma^{d}x_1 \otimes \dots \otimes \sigma^{d}x_k ]\bigr)=
	 [ c\otimes x_1 \otimes \dots x_k ]\] holds.
\end{Lemma}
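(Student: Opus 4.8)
The plan is to unwind the definition of the Thom isomorphism $\Phi$ given just above the statement and check directly that it takes $\sigma^d$-suspended generators to the underlying generators, slot by slot in the extended power. Recall that $\Phi$ is induced by the composite
\[
H_{\ast}\ep_{k}\Sigma^d E\xrightarrow{\delta_{\ast}}
H_{\ast}(\ep_{k}S^d \wedge \ep_{k}E) \cong
H_{\ast} \ep_{k}S^d \otimes H_{\ast}\ep_{k}E \xrightarrow{\epsilon\otimes\id}
H_{\ast}\ep_{k}E,
\]
where $\delta$ is the diagonal on $\bar{\mathcal C}_k$ and $\epsilon$ is evaluation at the orientation class $e_0\otimes\iota^k\in H^{\ast}\ep_k S^d$. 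First I would pass to the cellular chain level, using the identification $C_{\ast}(\ep_k E)\cong C_{\ast}(\bar{\mathcal C}_k)\otimes_{\Sigma_k} C_{\ast}(E)^{\otimes k}$ recalled right before the lemma, so that the class $[c\otimes\sigma^d x_1\otimes\cdots\otimes\sigma^d x_k]$ is literally represented by a chain of that form. On chains, the diagonal $\delta$ acts through the diagonal of $C_{\ast}(\bar{\mathcal C}_k)$ together with the shuffle identifying $C_{\ast}(S^d)^{\otimes k}\otimes C_{\ast}(E)^{\otimes k}$ with $C_{\ast}(\Sigma^d E)^{\otimes k}$; so $\delta_{\ast}[c\otimes\sigma^d x_1\otimes\cdots]$ is (up to the sign coming from that shuffle) $\sum [c'\otimes \iota\otimes\cdots\otimes\iota]\otimes[c''\otimes x_1\otimes\cdots\otimes x_k]$, summed over the terms of the coproduct $c\mapsto\sum c'\otimes c''$.

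Next I would compute the effect of $\epsilon\otimes\id$. The orientation class $e_0\otimes\iota^k$ is characterized by $e_0=1\in H^0\bar{\mathcal C}_k$ (the image of the bottom cell of $W$) and $\iota$ the fundamental class of $S^d$; pairing it against $[c'\otimes\iota\otimes\cdots\otimes\iota]$ picks out precisely the component of the coproduct with $c'$ of chain-degree $0$, which by the construction of the augmentation $\bar{\mathcal C}\to W$ is the unit, and kills all higher-degree contributions. Hence only the term with $c'=e_0$, $c''=c$ survives, and $\Phi([c\otimes\sigma^d x_1\otimes\cdots\otimes\sigma^d x_k])=[c\otimes x_1\otimes\cdots\otimes x_k]$. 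Finally I would verify that the shuffle sign is trivial: because $d$ is even (the standing hypothesis under which $\Phi$ is defined), moving the $k$ copies of $\iota\in H_d$ past the classes $x_i$ introduces only signs of the form $(-1)^{d\cdot|x_i|}=+1$, so no correction appears — this matches the $p$ odd, $d$ even convention, and is exactly the reason $\Phi$ exists in this range.

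The main obstacle will be bookkeeping at the chain level: one must be careful that the chosen cellular representative $c\otimes x_1\otimes\cdots\otimes x_k$ is compatible with the diagonal on $\bar{\mathcal C}_k$ and that the equivariant chain homotopy equivalence $C_{\ast}(E)^{\otimes k}\simeq H_{\ast}(E)^{\otimes k}$ used to produce such representatives intertwines $\delta_{\ast}$ with the obvious map — this is where one invokes that $\bar{\mathcal C}$ is cellular and that $\Phi$ is the colimit of the Thom isomorphisms $\Phi_s$ for the filtering system of bundles coming from the augmentation to $\mathcal L$. As the paper notes, the argument is the evident generalization of \cite[VIII 3.3]{L.M.S.}: one only replaces the standard acyclic complex $W$ by the spaces $\bar{\mathcal C}_k$, and the desuspension isomorphisms $\Phi_s$ assemble compatibly because all maps in sight are induced by the operadic diagonal and the $\mathcal L$-augmentation. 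With that compatibility in hand, the formula follows from the two computations above.
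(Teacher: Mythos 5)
Your proposal is correct and follows essentially the same route as the paper's proof: unwind the composite defining $\Phi$, apply a diagonal approximation $\widetilde{\Delta}$ on $C_{\ast}(\bar{\mathcal{C}}_k)$ so that $\delta_{\ast}$ sends $[c\otimes\sigma^d x_1\otimes\cdots\otimes\sigma^d x_k]$ to $\sum_i [c_i'\otimes\sigma\otimes\cdots\otimes\sigma]\otimes[c_i''\otimes x_1\otimes\cdots\otimes x_k]$, and then evaluate against the orientation class $e_0\otimes\iota^k$, which by counitality of $\widetilde{\Delta}$ returns $[c\otimes x_1\otimes\cdots\otimes x_k]$. Your additional remarks on the vanishing of the shuffle signs for $d$ even and on the chain-level bookkeeping make explicit points the paper leaves implicit, but do not change the argument.
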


\begin{proof}
  Consider a class $z\in H_{\ast}\Sigma^d \ep_k E$ and represent it as 
  \[z=[c\otimes \sigma^d x_1 \otimes \dots \otimes \sigma^d x_k].\] 
  Express 
  $\widetilde{\Delta} c = 
  \sum\limits_{i=1}c_i^\prime\otimes c_i^{\prime\prime}$,
  where $\widetilde{\Delta}$ is a diagonal approximation for 
  $\bar{\mathcal{C}}_k$.
  By definition of $\widetilde{\Delta}$, the class  $z$ is mapped to 
  \[\sum\limits_{i=1}(c_i^\prime\otimes\sigma\otimes \dotsm \otimes \sigma ) 
  \otimes (c_i^{\prime\prime}\otimes x_1 \otimes \dotsm \otimes x_k )\] 
  where $\sigma$ is interpreted as the generator of $H_{\ast}S^d$. 
  Evaluating the orientation class  $1\otimes i^k \otimes \id$ on that 
  cycle yields $[ c\otimes x_1 \otimes \dots x_k ]$, which finishes
  the proof.
\end{proof}

\begin{Corollary}\label{cor:thom-iso-everything}
  The Thom isomorphism $\Phi$ commutes with 
  Browder operations, Dyer-Lashof operations $Q_i$ and the Pontryagin product.
\end{Corollary}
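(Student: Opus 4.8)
The plan is to deduce the corollary from the formula for $\Phi$ established in Lemma~\ref{lem:thom-iso-formula} together with the standard chain-level description of the three operations. Recall from \cite{C.L.M.} and \cite{B.M.M.S.} that, under the identification $C_{\ast}(\ep_k E)\cong C_{\ast}(\bar{\mathcal{C}}_k)\otimes_{\Sigma_k}C_{\ast}(E)^{\otimes k}$, each operation is obtained by composing a \emph{fixed} chain of the operad with the input classes via the operad structure maps. Thus, writing a class as $z=[c\otimes x_1\otimes\dotsm\otimes x_k]$: the Dyer--Lashof operation $Q_i\co H_{\ast}\ep_kE\to H_{\ast}\ep_{pk}E$ is represented, up to the usual normalizing constant, by $[\,\theta(e_i;c,\dotsc,c)\otimes(x_1\otimes\dotsm\otimes x_k)^{\otimes p}\,]$, where $e_i\in C_{\ast}(\bar{\mathcal{C}}_p)$ is the image of the preferred generator of the standard free resolution $W$ in the appropriate degree under a chosen equivariant map $W\to C_{\ast}(\bar{\mathcal{C}}_p)$ and $\theta$ denotes the operadic composition; the Pontryagin product uses in the same way a fixed $0$-chain $c_2\in\bar{\mathcal{C}}_2$; and the Browder operation $L_{n-1}$ uses a fixed cycle $\ell$ carrying a generator of $H_{n-1}$ of the $2$-ary part of $\bar{\mathcal{C}}$. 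The point is that $e_i$, $c_2$ and $\ell$, hence the operadic chains built out of them, involve only the operad and not the input classes; in particular they are unchanged when the inputs $x_i$ are replaced by $d$-fold suspensions $\sigma^dx_i$.

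By Lemma~\ref{lem:thom-iso-formula}, $\Phi$ sends $[c\otimes\sigma^dx_1\otimes\dotsm\otimes\sigma^dx_k]$ to $[c\otimes x_1\otimes\dotsm\otimes x_k]$: it fixes the operad chain and removes one suspension from each tensor factor, with no sign because $d$ is even. Feeding this into the formulas above gives, for instance,
\begin{align*}
\Phi\bigl(Q_i[\,c\otimes\sigma^dx_1\otimes\dotsm\,]\bigr)
  &=\Phi\bigl([\,\theta(e_i;c,\dotsc,c)\otimes(\sigma^dx_1\otimes\dotsm)^{\otimes p}\,]\bigr)\\
  &=[\,\theta(e_i;c,\dotsc,c)\otimes(x_1\otimes\dotsm)^{\otimes p}\,]
   =Q_i\bigl(\Phi[\,c\otimes\sigma^dx_1\otimes\dotsm\,]\bigr),
\end{align*}
and likewise for $\ast$ and for $L_{n-1}$. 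Reassembling the $\ep_k$ by Snaith's splitting, as in the construction of $\Phi$ on $H_{\ast}\mathcal{C}\Sigma^dE_+$, yields the assertion.

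The one point that is not purely formal -- and hence the main obstacle -- is knowing that the preferred chains $e_i$, $c_2$, $\ell$ above are precisely the ones with respect to which $\Phi$ was constructed, i.e.\ that $\Phi$ is built operadically rather than via an arbitrary choice of Thom class. This is exactly the content of the construction of $\Phi$ recalled before Lemma~\ref{lem:thom-iso-coalg}, as a colimit of the inverse Thom isomorphisms $\Phi_s$ for the system of bundles coming from the augmentation $\bar{\mathcal{C}}\to\mathcal{L}$, followed by desuspension isomorphisms: under this augmentation $e_i$, $c_2$ and $\ell$ go to the standard chains used to define the operations on $\mathcal{L}$-algebras, and $\Phi$ is compatible with these identifications. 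Granting this, the verification, together with the bookkeeping of the relevant signs, is the one carried out for cyclic powers in \cite[Lemma~VIII.3.3]{L.M.S.}, and goes through verbatim once $W$ is replaced throughout by the chains on $\bar{\mathcal{C}}$.
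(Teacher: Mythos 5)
Your proposal is correct and follows essentially the same route as the paper: both arguments observe that each operation is given by pairing a fixed operad chain (Cohen's $e_i$, a point of $\bar{\mathcal{C}}_2$, or the fundamental class of $\bar{\mathcal{C}}_2\simeq S^{n-1}$) with the inputs via the structure map, and then apply Lemma~\ref{lem:thom-iso-formula}, which says $\Phi$ fixes the operad chain and strips the suspensions. Your closing remark about the operadic construction of $\Phi$ and the generalization of \cite[Lemma~VIII.3.3]{L.M.S.} with $W$ replaced by the chains on $\bar{\mathcal{C}}$ is exactly the justification the paper gives before stating that lemma.
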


\begin{proof}
  The operations are defined to be the image of a class 
  $[ c\otimes x_1 \otimes \dots x_k ]$ under the algebra structure map,
  where $c$ is either one of Cohen's classes $e_i$, or
  the fundamental class $\iota$ for $\bar{\mathcal{C}}_2\simeq S^{n-1}$, 
  or a class in $H_0 \bar{\mathcal{C}}_2$ \cite[p.~66]{May:geometry}. 
  Moreover, it suffices to consider free $\bar{\mathcal{C}}$-algebras. 
  The assertion then follows directly from Lemma~\ref{lem:thom-iso-formula}. 
\end{proof}

The coproduct $\psi$ on the homology of a $\bar{\mathcal{C}}$-algebra on a 
space $X$ is related to the other operations by diagonal 
Cartan formulas \cite{C.L.M.}. These read:
\begin{align}
  \psi Q^k (x) & = \sum\limits_{i+j=k}\sum\limits_r Q^i (x^{\prime}_r )\otimes 
  Q^j (x^{\prime\prime}_r) \\ 
  \psi\lambda_{n-1}(x,y)&=\sum\limits_{r,s} 
  (-1)^{(n-1)\lvert x^{\prime}_r\rvert+\lvert x^{\prime\prime}_r\rvert 
    \lvert y^{\prime}_s\rvert}\lambda_{n-1}(x^{\prime}_r y^{\prime}_s )
  \otimes x_r^{\prime\prime}y_s^{\prime\prime} \\
  & + \sum\limits_{r,s}(-1)^{(n-1)\lvert y_s^{\prime}\rvert+
    \lvert x_r^{\prime\prime}\rvert \lvert y_{s}^{\prime}\rvert}x_r^{\prime}
  y_{s}^{\prime}\otimes \lambda_{n-1}(x_{r}^{\prime\prime},y_{s}^{\prime\prime})\\
  \mathrm{if} & \quad\psi (x)=\sum\limits_{r}x_r^{\prime}\otimes x^{\prime\prime}_r  
  \quad \mathrm{and} \quad 
  \psi (y)=\sum\limits_{s} y_{s}^{\prime}\otimes y^{\prime\prime}_{s}
\end{align}
The nonlinear top operation $\xi$ satisfies a diagonal formula 
only up to an error term consisting of $p$-fold Browder operations in the 
elements $x^{\prime},x^{\prime\prime}$. 
This fact was overlooked in \cite{C.L.M.} and corrected 
by Wellington \cite{Wellington}.
The diagonal formula for the operation $\zeta$ follows from the formulae
for $\xi$ and $\lambda_{n-1}$, together with the relation 
$\zeta (x) = \beta \xi (x) - \mathrm{ad}^{p-1}(x)(\beta x)$.

\begin{Corollary}\label{cor:cartan}
  The diagonal Cartan formulas hold in  $H_{\ast}\bar{\mathcal{C}} E$.
\end{Corollary}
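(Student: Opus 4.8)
The plan is to reduce the spectrum-level statement to Cohen's unstable diagonal Cartan formulas by means of the Thom isomorphism $\Phi$ and Kuhn's identification of the coproduct $\psi$ on suspension spectra.

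First I would observe that $\psi$, the Dyer--Lashof operations $Q_i$ (including the two top operations $\xi$ and $\zeta$), the Browder operations $\lambda_{n-1}$, the Pontryagin product and the Bockstein are all natural in $E$, and that each diagonal Cartan formula is an identity involving only finitely many homology classes. Since $H_\ast\bar{\mathcal{C}}E$ is built functorially from $H_\ast E$, it therefore suffices to verify the formulas for free $\bar{\mathcal{C}}$-algebras on a bounded below wedge of sphere spectra. Picking an even integer $d$ large enough that all the relevant sphere dimensions, shifted by $d$, lie in degrees $\geq 2$, one may assume $E = \Sigma^{-d}\Sigma^\infty Z$ with $Z$ a wedge of spheres $S^m$, $m\geq 2$; then $Z$ is a co-$H$-space and $\Sigma^d E_+ \simeq \Sigma^\infty Z_+$.

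Next I would invoke, on the suspension-spectrum side, the identification $\bar{\mathcal{C}}\Sigma^\infty Z \cong \Sigma^\infty\bar{\mathcal{C}}Z$ together with Kuhn's theorem \cite{Kuhn:diagonal} that for a co-$H$-space $Z$ the coproduct $\psi$ agrees with the ordinary coproduct on $H_\ast\Sigma^\infty\bar{\mathcal{C}}Z_+$, and Steinberger's comparison \cite{B.M.M.S.} identifying the Dyer--Lashof, Browder and Pontryagin structure there with Cohen's structure on $H_\ast\bar{\mathcal{C}}Z$. Cohen's diagonal Cartan formulas \cite{C.L.M.} then hold on $H_\ast\bar{\mathcal{C}}\Sigma^\infty Z_+$ verbatim --- for $\xi$ only up to the sum of $p$-fold Browder operations corrected by Wellington \cite{Wellington}, and for $\zeta$ by feeding the formulas for $\xi$ and $\lambda_{n-1}$ through $\zeta(x) = \beta\xi(x) - \mathrm{ad}^{p-1}(x)(\beta x)$.

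Finally I would transport these identities along $\Phi\colon H_\ast\bar{\mathcal{C}}\Sigma^\infty Z_+ \to H_\ast\bar{\mathcal{C}}E_+$. By Lemma~\ref{lem:thom-iso-coalg} the map $\Phi$ is compatible with $\psi$; by Corollary~\ref{cor:thom-iso-everything} it commutes with the $Q_i$, the Browder operations and the Pontryagin product; and naturality of $\Phi$ gives compatibility with the Bockstein. Since a diagonal Cartan formula (including its Browder-operation error term for $\xi$) is a polynomial identity in exactly these operations, applying $\Phi$ --- using the explicit formula $\Phi([c\otimes\sigma^d x_1\otimes\cdots\otimes\sigma^d x_k]) = [c\otimes x_1\otimes\cdots\otimes x_k]$ of Lemma~\ref{lem:thom-iso-formula} wherever the degree shift by $d$ intervenes --- carries each formula over to $H_\ast\bar{\mathcal{C}}E$. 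The hard part will be the first reduction: making precise that naturality genuinely reduces an arbitrary class to one pulled back from $\bar{\mathcal{C}}$ of a wedge of spheres on which the Thom isomorphism (needing $d$ even) and Kuhn's co-$H$-space hypothesis can be invoked simultaneously, and checking that the error terms for $\xi$ and $\zeta$ survive intact under all these identifications.
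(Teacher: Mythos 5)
Your overall strategy --- reduce to the unstable computation of Cohen via Kuhn's identification of the coproduct on suspension spectra of co-$H$-spaces, and then transport the identities along the Thom isomorphism $\Phi$ using Lemmas~\ref{lem:thom-iso-coalg} and~\ref{lem:thom-iso-formula} and Corollary~\ref{cor:thom-iso-everything} --- is exactly the one the paper follows, and your handling of the error terms for $\xi$ and $\zeta$ via Wellington's correction and the relation $\zeta(x)=\beta\xi(x)-\mathrm{ad}^{p-1}(x)(\beta x)$ matches the discussion preceding the corollary. The problem is the step you yourself flag as ``the hard part'': the reduction to free algebras on a wedge of sphere spectra does not work. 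A class $x\in H_m(E;\FF_p)$ is pulled back along a map from a wedge of spheres only if it lies in the image of the mod~$p$ Hurewicz homomorphism, and this image is in general a proper subspace (for $E=\Hp$ itself it is concentrated in degree $0$, while $H_\ast(\Hp;\FF_p)$ is the dual Steenrod algebra). Since the diagonal Cartan formulas must be verified on \emph{all} of $H_\ast E$ --- the generators of $W_{n-1}H_\ast E$ run over a basis of $H_\ast E$ --- naturality along maps from wedges of spheres covers too little, and no choice of $d$ repairs this.

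The paper's reduction is different and avoids sphericality entirely: by \cite[I.4.7]{L.M.S.} one has $E\cong\colim_m\Sigma^{-m}\Sigma^{\infty}E_m$, and since extended powers commute with such colimits \cite[Prop.~I.1.2]{B.M.M.S.}, every finite collection of classes in $H_\ast\bar{\mathcal{C}}E$ comes from a finite stage $\Sigma^{-m}\Sigma^{\infty}X$ with $X$ an \emph{arbitrary} pointed cell complex (a space of the spectrum $E$), not a wedge of spheres. Replacing $X$ by $\Sigma X$ and $m$ by $m+1$ if necessary makes $m$ even and $X$ a suspension, so that Kuhn's co-$H$-space hypothesis and the Thom isomorphism apply simultaneously, and Cohen's unstable formulas for $H_\ast\bar{\mathcal{C}}\Sigma^\infty\Sigma X$ transport back via the explicit formula of Lemma~\ref{lem:thom-iso-formula}. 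If you substitute this colimit argument for your wedge-of-spheres reduction, the remainder of your proof goes through as written.
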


\begin{proof}
  This assertion is proven in \cite{C.L.M.}
  for a suspension spectrum $\Sigma^{\infty}X$. 
  By \cite[I.4.7]{L.M.S.} there is a natural isomorphism  
  $E\cong  \colim_m \Sigma^{-m}\Sigma^{\infty}E_m$.
  Since extended powers commute with colimits by~\cite[Prop.~I.1.2]{B.M.M.S.} 
  it suffices to prove the assertion for a shifted
  suspension spectrum $\Sigma^{-m}\Sigma^{\infty}X $,
  where $X$ is a pointed cell complex.  
  Furthermore, one can restrict to the case where $X$ is connected and $m$
  is even because of the natural isomorphism 
  \[\Sigma^{-m}\Sigma^{\infty}X \cong \Sigma^{-(m+1)}\Sigma^{\infty}\Sigma X \]
  (see \cite[I.4.2]{L.M.S.}). Then the assertion follows from
  Lemma~\ref{lem:thom-iso-formula}. 
\end{proof}

The following is now immediate:

\begin{Proposition}\label{prop:thom-iso}
  Let $E=\Sigma^{-m}\Sigma^{\infty}X $ for a connected cell
  complex $X$ and $m$ even. 
  Then the composition
  \[W_{n-1}H_{\ast}\Sigma^m E\xrightarrow{\eta}H_{\ast}\bar{C}_{n-1}\Sigma^m E
  \xrightarrow{\Phi^{\frac{m}{2}}} H_{\ast}\bar{C}_{n-1} E\]
  is an isomorphism of $AR_{n-1}\Lambda_{n-1}$-Hopf algebras.
\end{Proposition}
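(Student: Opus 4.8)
The plan is to treat the two maps in the composition separately and then combine the structure each carries. First I would use that $m$ is even and $\Sigma^m E=\Sigma^\infty X$, so that the Snaith-type equivalence $\bar{\mathcal{C}}\Sigma^\infty X\simeq\Sigma^\infty\mathcal{C}_n X$ identifies $H_\ast\bar{\mathcal{C}}_{n-1}\Sigma^m E$ with $H_\ast\mathcal{C}_n X$; Cohen's theorem \cite{C.L.M.} then gives that $\eta$ is an isomorphism onto this group and respects the $AR_{n-1}\Lambda_{n-1}$-algebra structure. For the comultiplication one uses $\psi$ as in \cite{Kuhn:diagonal}, whose coincidence with the geometric coproduct employed by Cohen is Kuhn's theorem; here one first reduces, exactly as in the proof of Corollary~\ref{cor:cartan}, to the case that $X$ is a suspension, hence a co-$H$-space, so that Kuhn's hypothesis is satisfied. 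Thus $\eta$ is an isomorphism of $AR_{n-1}\Lambda_{n-1}$-Hopf algebras. On the other hand $\Phi^{\frac{m}{2}}$ is the iterated inverse Thom isomorphism assembled just before Lemma~\ref{lem:thom-iso-coalg}, and in particular an isomorphism of graded $\FF_p$-modules.

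Second, I would collect the compatibilities of $\Phi^{\frac{m}{2}}$: it intertwines the Pontryagin products, the Dyer-Lashof operations $Q_i$ and the Browder operations by Corollary~\ref{cor:thom-iso-everything}; it is compatible with the comultiplication $\psi$ by Lemma~\ref{lem:thom-iso-coalg}; and the diagonal Cartan formulas hold on both source and target by Corollary~\ref{cor:cartan}. Combining this with the properties of $\eta$ already listed, the composite $\Phi^{\frac{m}{2}}\circ\eta$ is an isomorphism of $R_{n-1}\Lambda_{n-1}$-Hopf algebras under which the diagonal Cartan relations correspond.

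The remaining and main point is compatibility with the reduced power action, that is with the $A$-part of the $AR_{n-1}\Lambda_{n-1}$-structure, which is delicate since a Thom isomorphism need not be $\mathcal{A}_p$-equivariant. I would argue by freeness. On the weight-one summand $\Phi$ restricts, by Lemma~\ref{lem:thom-iso-formula}, to the stable desuspension isomorphism $H_\ast\Sigma^m E\to H_\ast E$, so $\Phi^{\frac{m}{2}}\circ\eta$ carries the generators $H_\ast\Sigma^m E$ of the source to the generators $H_\ast E$ of the target by a stable isomorphism, which commutes with the $\mathcal{A}_p$-coaction. Now $H_\ast\bar{\mathcal{C}}_{n-1}\Sigma^m E=H_\ast\mathcal{C}_n X$ is a \emph{free} $R_{n-1}\Lambda_{n-1}$-algebra on the $\mathcal{A}_p$-coalgebra $H_\ast X$ by \cite{C.L.M.}, and by Steinberger's work \cite{B.M.M.S.}, together with Lemma~\ref{lem:thom-iso-coalg} and Corollary~\ref{cor:cartan}, $H_\ast\bar{\mathcal{C}}_{n-1}E$ is likewise a free $R_{n-1}\Lambda_{n-1}$-algebra on $H_\ast E$ on which the Nishida relations and the Cartan formula are valid. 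On such a free object the $\mathcal{A}_p$-module structure is determined by the Nishida relations, the Cartan formula and its restriction to the generators; since $\Phi^{\frac{m}{2}}\circ\eta$ is an isomorphism of $R_{n-1}\Lambda_{n-1}$-Hopf algebras inducing an isomorphism of $\mathcal{A}_p$-coalgebras on generators, it must intertwine the full $AR_{n-1}\Lambda_{n-1}$-Hopf algebra structures, which is the assertion. The two places where I expect to need care in the write-up are making precise the statement that the $\mathcal{A}_p$-action on a free object is determined by its restriction to generators, and pinning down the coproduct conventions so that Kuhn's theorem applies; the rest is bookkeeping on top of the corollaries cited above.
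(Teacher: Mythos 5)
Your proposal is correct and follows essentially the same route as the paper, which states the proposition as an immediate consequence of exactly the ingredients you assemble: Cohen's identification $H_\ast\mathcal{C}_nX\cong W_{n-1}H_\ast X$ together with Kuhn's comparison of coproducts, Lemma~\ref{lem:thom-iso-coalg}, Lemma~\ref{lem:thom-iso-formula}, Corollary~\ref{cor:thom-iso-everything} and Corollary~\ref{cor:cartan}. Your extra care about the $\mathcal{A}_p$-action being determined on a free object by the Nishida relations, the Cartan formulas and the restriction to generators is a point the paper leaves implicit, but it is the intended justification and is handled correctly.
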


Note that the isomorphism in Proposition~\ref{prop:thom-iso} does not
preserve the degree. As an algebra, $W_{n-1}H_{\ast}\Sigma^m E$ is 
isomorphic to a free commutative graded algebra  $\Lambda M_{n-1}$. 
The vector space $M_{n-1}$ has a basis which consists of classes 
$Q^I y$, where $I$ runs through certain admissible sequences and 
$y$ runs through basic products in the $\lambda_{n-1}$-algebra 
underlying $W_{n-1}H_{\ast}E$ (see \cite[p.227]{C.L.M.} for precise definitions). 
This information and Proposition~\ref{prop:thom-iso} imply the next result,
which is the raison d'\^{e}tre of this appendix.

\begin{Corollary}\label{cor:div-power}
  Let $E=\Sigma^{-m}\Sigma^{\infty}X $ for a connected cell
  complex $X$ and $m$ even. 
  As an algebra, $H^{\ast}\bar{C}_{n-1} E$ is the 
  free algebra $\Gamma \shift{k}{M}_{n-1}^{\ast}$ with divided powers 
  on the dual of a shifted copy $\shift{k}{M}_{n-1}$ of $M_{n-1}$.
\end{Corollary}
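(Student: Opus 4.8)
The plan is to deduce the statement by dualizing, over $\FF_p$, the homology Hopf algebra, exploiting that everything in sight is of finite type. By Proposition~\ref{prop:thom-iso} the composite $\Phi^{\frac{m}{2}}\circ\eta$ is an isomorphism of $AR_{n-1}\Lambda_{n-1}$-Hopf algebras from $W_{n-1}H_\ast\Sigma^m E$ onto $H_\ast\bar{C}_{n-1}E$; it alters degrees, and in accordance with the statement I write $\shift{k}$ for the resulting shift. In particular $H_\ast\bar{C}_{n-1}E$ is, as a graded algebra, the free graded-commutative algebra on $\shift{k}M_{n-1}$, by Cohen's description recalled just above. What is left is to pin down the coproduct and then dualize.

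The weight-one summand $H_\ast\Sigma^m E$ of $W_{n-1}H_\ast\Sigma^m E$ consists of $\psi$-primitive elements: $\psi$ is built from the map $\rho$ which forms the sum over all partitions of a labelled configuration, and a weight-one configuration admits only the two trivial partitions, forcing $\psi(z)=z\otimes1+1\otimes z$ there. Cohen's generating set $M_{n-1}$ consists of admissible Dyer--Lashof operations applied to iterated $\lambda_{n-1}$-products of a homogeneous basis of $H_\ast\Sigma^m E$, and by the diagonal Cartan formulas of Corollary~\ref{cor:cartan} the Browder operations, the Dyer--Lashof operations and the Pontryagin product all carry primitives to primitives; an induction on the number of $\lambda_{n-1}$-factors and on the length of the admissible sequence then shows that every element of $M_{n-1}$ is $\psi$-primitive. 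Hence, as a Hopf algebra, $H_\ast\bar{C}_{n-1}E$ is the free graded-commutative Hopf algebra on the finite-type, $\psi$-primitive graded vector space $\shift{k}M_{n-1}$. The cup product on $H^\ast\bar{C}_{n-1}E$ is the $\FF_p$-linear dual of the $\psi$-coproduct on $H_\ast\bar{C}_{n-1}E$ (this is Kuhn's coproduct, the one whose dual governs cup products; compare \cite{Kuhn:diagonal} and Lemma~\ref{lem:thom-iso-coalg}), and forming the dual is legitimate by finite type. The assertion then follows from the formal fact that the graded $\FF_p$-dual of the free graded-commutative Hopf algebra $\Lambda V$ on a finite-type primitive graded vector space $V$ is the free divided-power algebra $\Gamma V^\ast$, i.e.\ a divided-power algebra on the even part of $V^\ast$ tensored with an exterior algebra on its odd part. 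Decomposing $\Lambda V$ into a tensor product of one-generator Hopf algebras reduces this to the two elementary cases $\FF_p[x]$ with $|x|$ even, whose comultiplication $x^n\mapsto\sum_i\binom{n}{i}x^i\otimes x^{n-i}$ dualizes to the divided-power relations $\gamma_i\gamma_j=\binom{i+j}{i}\gamma_{i+j}$ with $\gamma_i:=(x^i)^\ast$, and the self-dual $\Lambda[y]$ with $|y|$ odd. Applying this with $V=\shift{k}M_{n-1}$ yields $H^\ast\bar{C}_{n-1}E\cong\Gamma(\shift{k}M_{n-1})^\ast$ as graded algebras.

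The step I expect to require the most care is the coproduct side: one must be sure both that the ring structure on $H^\ast\bar{C}_{n-1}E$ used in the body of the paper is the dual of exactly Kuhn's coproduct $\psi$ — and not of some other coproduct on $H_\ast$ — and that $\psi$ renders Cohen's algebra generators primitive, the latter being precisely what the diagonal Cartan formulas of Corollary~\ref{cor:cartan} deliver. One should also note the minor but genuine point that a primitively generated Hopf algebra may have decomposable primitives (for instance $p$-th powers of even generators), so the correct formulation is that $H_\ast\bar{C}_{n-1}E$ is free graded-commutative on the \emph{primitive subspace} $\shift{k}M_{n-1}$, rather than that its primitives coincide with its indecomposables. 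Granted these structural points, the dualization is purely formal and uses nothing beyond finite type.
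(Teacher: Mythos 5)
Your strategy---dualize the Hopf algebra supplied by Proposition~\ref{prop:thom-iso} by hand, via primitivity of Cohen's generators---is a genuinely different route from the paper's, which after Proposition~\ref{prop:thom-iso} simply invokes Wellington's structure theorem \cite[3.13]{Wellington} for the dual of the free construction $W_{n-1}$. Several of your ingredients are sound: the weight-one classes are indeed $\psi$-primitive, the identification of the relevant coproduct with Kuhn's $\psi$ (and of its dual with the product used in the body of the paper) is the right point to worry about and is covered by \cite{Kuhn:diagonal} and Lemma~\ref{lem:thom-iso-coalg}, and the purely formal statement that the graded dual of a free graded-commutative Hopf algebra on a finite-type primitive space $V$ is $\Gamma V^{\ast}$ is correct.

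The gap is the sentence claiming that ``by the diagonal Cartan formulas of Corollary~\ref{cor:cartan} the Browder operations, the Dyer--Lashof operations and the Pontryagin product all carry primitives to primitives.'' This is exactly where the odd-primary, finite-$n$ situation bites: as the paper stresses immediately after displaying those formulas, the top operation $\xi=Q_{(n-1)(p-1)}$ satisfies the diagonal Cartan formula only up to an error term consisting of $p$-fold Browder operations in the components of $\psi(x)$ (a point overlooked in \cite{C.L.M.} and corrected by Wellington), and the formula for $\zeta$ inherits the same defect through $\zeta(x)=\beta\xi(x)-\mathrm{ad}^{p-1}(x)(\beta x)$. Your induction on the length of the admissible sequence therefore breaks down precisely at the generators $Q^{I}y$ whose sequence involves the top operation: for a primitive input the Browder error terms are distributed over the two tensor factors and there is no a priori reason for them to vanish, so these generators need not be primitive on the nose, and ``free graded-commutative Hopf algebra on a primitive generating space'' is not established. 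This is not cosmetic---controlling the coproduct in the presence of the $\xi$-correction is the actual content of \cite[3.13]{Wellington}, which shows that the dual is nevertheless a free divided-power algebra. To repair your argument you must either cite that result (as the paper does) or explicitly analyse the error terms on primitives and show they do not alter the dual algebra structure; as written, the proof is incomplete at this step.
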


\begin{proof}
  The assertion follows from Proposition~\ref{prop:thom-iso} and 
  \cite[3.13]{Wellington}. 
\end{proof}

\section{Nishida relations}
\label{sec:nishida-relations}

This final section presents the proofs of
Proposition~\ref{qcal} and 
the Nishida relations in cohomology, Proposition~\ref{nish}.
The standard Nishida relations in homology given
(\cite[p. 209, Theorem 9.4]{May:steenrod}) are a major ingredient
in this proof. 
The modified version of the two top operations from \cite{C.L.M.} 
is not required here.

\begin{Proposition}[Nishida relations]\label{hnis}
  Let $s\in \N$ and $0\leq r<(p-1)(n-1)$. Then the equality
  \begin{align*}
    \stp_s\bigl( Q_r(x)\bigr)  & =  
    \sum_{i=0}^{\bigl\lfloor \frac{s}{p} \bigr\rfloor} 
    a_{r,s,\lvert x\rvert}(i)  Q_{r+2(pi-s)(p-1)} \stp_i(x) \\
    & \quad +  \delta_r\sum_{i=0}^{\bigl\lfloor \frac{s-1}{p}\bigr\rfloor} 
    b_{r,s,\lvert x\rvert}(i) Q_{r+p+2(pi-s)(p-1)} \beta \stp_i(x)
  \end{align*}
  holds, where
  \begin{align*}
    \delta_r  & = \begin{cases} 0 & r \equiv 0 \mod 2 \\ 1 & r \equiv 1 \mod 2
    \end{cases} \\
    a_{r,s,\lvert x\rvert}(i)  & =  \binom{s-ps+\bigl\lfloor\frac{r}{2}\bigr\rfloor + \frac{p-1}{2}\lvert x \rvert}{s-pi}\\
    b_{r,s,\lvert x\rvert}(i)  & =  (-1)^{\frac{p-1}{2}\lvert x \rvert +1} 
    \bigl(\tfrac{p-1}{2}\bigr)!\binom{s-ps-1+\bigl\lfloor\frac{r+1}{2}\bigr\rfloor +
      \frac{p-1}{2}\lvert x \rvert }{s-pi-1}
  \end{align*}
\end{Proposition}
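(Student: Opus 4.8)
The plan is to obtain Proposition~\ref{hnis} as a purely formal reindexing of the classical Nishida relations, the hypothesis $0\le r<(p-1)(n-1)$ doing nothing more than keeping us away from the two top Dyer--Lashof operations $\xi=Q_{(n-1)(p-1)}$ and $\zeta=Q_{(n-1)(p-1)-1}$, so that no Browder error term and no modification in the spirit of \cite{Wellington} enters. First I would recall the input: for a homogeneous class $x$ of degree $q$ and the dual Steenrod power $\stp_s$, which lowers degree by $2s(p-1)$, \cite[p.~209, Thm.~9.4]{May:steenrod} gives
\[\stp_s\bigl(Q^{a}(x)\bigr)=\sum_{i}(-1)^{s+i}\binom{(a-s)(p-1)}{s-pi}\,Q^{a-s+i}\stp_i(x),\]
together with the companion identity for $\stp_s\bigl(\beta Q^{a}(x)\bigr)$, whose principal summand carries the binomial $\binom{(a-s)(p-1)-1}{s-pi}$ and whose Bockstein summand carries $\binom{(a-s)(p-1)-1}{s-pi-1}$. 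These identities hold in the homology of any $E_\infty$- (or $H_\infty$-) ring spectrum, in particular on $H_\ast(D_{\infty,pj}X)$; in the $\Cc_n$-setting they are part of Cohen's $AR_{n-1}\Lambda_{n-1}$-Hopf algebra structure \cite{C.L.M.}. The reduction from $\Cc_n$-spaces to $\Cc_n$-spectra is the Thom-isomorphism argument of Appendix~\ref{sec:homol-free-algebr} (Corollary~\ref{cor:thom-iso-everything}), applied exactly as in the proof of Proposition~\ref{epsilonHO}.

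Next I would make the change of indexing explicit. For $x$ of degree $q$ set $\varepsilon=r\bmod 2$ and $a=a(q,r)=\bigl((p-1)q+r+\varepsilon\bigr)/\bigl(2(p-1)\bigr)$; comparing degrees shows that $Q_r(x)$ is, up to a unit scalar depending only on $q$ and $r$, the class $\beta^{\varepsilon}Q^{a}(x)$, and forces $a\in\N$ whenever the left-hand side can be nonzero. Replacing $x$ by $\stp_i(x)$, a class of degree $q-2i(p-1)$, one reads off from degrees alone that the operation $Q^{a-s+i}$ appearing on the right of the displayed relation becomes $Q_{r+2(pi-s)(p-1)}$, and that $Q^{a-s+i}\beta$ in the companion relation becomes $Q_{r+p+2(pi-s)(p-1)}\beta$. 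This is the only point at which the new index enters, and it is immediate; moreover the Bockstein summand survives precisely when $\varepsilon=1$, i.e.\ when $r$ is odd, which is exactly the job of the factor $\delta_r$.

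It then remains to identify the coefficients. When $r$ is even one has $a(p-1)=\tfrac{p-1}{2}q+\lfloor r/2\rfloor$, so $(a-s)(p-1)$ is exactly the top entry $s-ps+\lfloor r/2\rfloor+\tfrac{p-1}{2}q$ of the binomial in $a_{r,s,\lvert x\rvert}(i)$. When $r$ is odd one has $a(p-1)=\tfrac{p-1}{2}q+\lfloor(r+1)/2\rfloor$, and $(a-s)(p-1)-1$ is the common top entry of the two binomials appearing in $a_{r,s,\lvert x\rvert}(i)$ and $b_{r,s,\lvert x\rvert}(i)$, the lower entries $s-pi$ and $s-pi-1$ distinguishing them. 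What is left over --- the residual sign $(-1)^{s+i}$ and the factor $(-1)^{\frac{p-1}{2}\lvert x\rvert+1}\bigl(\tfrac{p-1}{2}\bigr)!$ appearing in $b_{r,s,\lvert x\rvert}(i)$ --- is absorbed by the sign-and-factorial normalisation built into the lower-indexed operations $Q_r$ and $\beta Q_r$ of \cite{C.L.M.}, together with the elementary congruences for binomial coefficients modulo $p$ and the Wilson congruence $\bigl((\tfrac{p-1}{2})!\bigr)^{2}\equiv(-1)^{\frac{p+1}{2}}\bmod p$ already used in the proof of Proposition~\ref{duality}.

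The degree bookkeeping in the last two steps is forced, and the passage from spaces to spectra is routine given the appendix; the only genuine work, and the step I expect to be the main obstacle, is keeping the four parity cases ($q$ and $r$ each even or odd) and the attendant signs and factorials straight, so that the normalisations on the two sides of the Nishida relation match consistently. With that done, Proposition~\ref{hnis} follows and then feeds, as stated, into the proofs of Propositions~\ref{qcal} and~\ref{nish}.
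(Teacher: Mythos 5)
Your proposal is correct and matches the paper's treatment: Proposition~\ref{hnis} is stated there without a written-out proof, being exactly the standard Nishida relations of \cite[p.~209, Thm.~9.4]{May:steenrod} transported to the lower-indexed operations $Q_r$, with the hypothesis $0\le r<(p-1)(n-1)$ serving precisely to exclude the two top operations so that no Wellington-type correction is needed. Your reindexing $a=\bigl((p-1)q+r+\varepsilon\bigr)/\bigl(2(p-1)\bigr)$ and the identification of the binomial tops with $(a-s)(p-1)$ resp.\ $(a-s)(p-1)-1$ check out, so you have in effect supplied the routine verification the paper leaves to the reader.
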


We turn now to the proof of Proposition~\ref{nish}.

\begin{proof}
  Let $\Sigma_p$ act on $\N^p$ by permuting coordinates, set
  \[ N: = \bigl\{n \in \N^p \co n_i \leq n_{i+1}, \sum n_i = s, \exists j\co n_j < n_{j+1}\bigr\}\subset \N^p \]
  and choose $n\in N$.
  To justify that
  the residue class $e_n$ of the order of the isotropy
  group is invertible in $\FF_p$, observe that
  the prime $p$ divides $e_n$ if and only if $n_i = n_j$ for all $i,j$
  for the following reason.
  In fact, if all coordinates of $n\in \N^p$ are equal, 
  the isotropy group of $n$ has order $p!$, being $\Sigma_p$. 
  If $p$ divides $e_n$,
  there exists an element $\tau$ of order $p$ in the isotropy group of $n$
  by Cauchy's theorem from group theory. The only elements of order $p$ in 
  $\Sigma_p$ are $p$-cycles. Thus $\tau
  \cdot n = n$ implies $n_i = n_j$ for all $i,j$.
  It follows that $\tfrac{1}{e_n}$ is well defined.

  The proof is basically a computation of pairings with
  generating elements of the corresponding homology, together
  with references to Proposition~\ref{hnis}.
  Pairing equation~(\ref{eq:nish1})
  with the Dyer-Lashof operations yields as left hand side 
  \[ \bigl\langle  \stp^s  \dlc^r(x) ,  Q_j(y) \bigr\rangle =
  \bigl\langle   \dlc^r(x) , \stp_s  Q_j(y) \bigr\rangle \]
  whereas the right hand side will be considered separately for each of
  the three summands (with the third only appearing if $r=0$). 
  The first summand is
  \[\sum_{i=0}^{\bigl\lfloor \frac{s}{p}\bigr\rfloor} a^{r,s,\lvert x\rvert}(i) 
  c^{r,s,\lvert x\rvert}(i)  \dlc^{r+2(s-pi)(p-1)} \stp^i(x).\]
  On the cohomological side, the pairing $\bigl\langle
  \dlc^{r+2(s-pi)(p-1)} \stp^i(x) ,  Q_j(y) \bigr\rangle$ is
  non-zero only if $j = r+2(s-pi)(p-1)$ and either
  \begin{equation*}
    2(p-1)  \mid (p-1)\lvert y\rvert+j
    \quad \mathrm{or} \quad
    2(p-1)  \mid (p-1)\lvert y\rvert +j+1.
  \end{equation*}
  For the purpose of this proof, such an index $j$ will be called \emph{good}.
  Let $q := \frac{r-j+2s(p-1)}{2(p-1)p}$, then
  \begin{align*}
    \bigl\langle \sum_{i=0}^{\bigl\lfloor  \frac{s}{p} \bigr\rfloor}
    a^{r,s,\lvert x\rvert}(i) c^{r,s,\lvert x\rvert}(i)
    & \dlc^{r+2(s-pi)(p-1)} \stp^i(x) ,Q_j(y) \bigr\rangle \\
    &= \begin{cases}
      a^{r,s,\lvert x\rvert}(q)\gamma(\lvert x \rvert,r)\bigl\langle\stp^q(x),y\bigr\rangle 
      & q \in \N, j \ \mathrm{good} \\
      0  & \mathrm{else}
    \end{cases}
  \end{align*}
  On the homological side, $\bigl\langle  \dlc^r(x) ,Q_{j+2(pi-s)(p-1)}  \stp_i(x) \bigr\rangle$ 
  is nonzero only if $j = r+2(s-pi)(p-1)$ and either
  \begin{align*}
    2(p-1) & \mid j+2(pi-s)(p-1) + (p-1)(\lvert y \rvert -2(p-1)i)\\
    \intertext{or}
    2(p-1) & \mid j+2(pi-s)(p-1) + (p-1)(\lvert y \rvert -2(p-1)i)+1
  \end{align*}
  which reduces to $j$ being good. As above
  \begin{align*}
    \bigl\langle  \dlc^r(x) , \sum_{i=0}^{\bigl\lfloor \frac s p \bigr\rfloor} 
    a_{j,s,\lvert y\rvert}(i) \bigr. &  Q_{j+2(pi-s)(p-1)} \stp_i(y) \bigr\rangle \\
    &= \begin{cases}
      a_{j,s,\lvert y \rvert}(q) \gamma(\lvert x \rvert ,r) \bigl\langle x, \stp_q(y) \bigr\rangle 
      & q \in \N, j \ \mathrm{good} \\
      0  & \mathrm{else}.
    \end{cases}
  \end{align*}
  Both these terms are nonzero only if $\lvert \stp^q(x)\rvert = \lvert y \rvert $, and then a
  simple calculation shows $a_{j,s,\lvert y\rvert}(q) = a^{r,s,\lvert x\rvert}(q)$. 
  The equality
  \begin{multline}\label{eq:dl1}
    \Bigl\langle \sum_{i=0}^{\bigl\lfloor \frac s p \bigr\rfloor} a^{r,s,\lvert x\rvert}(i)c^{r,s,\lvert x\rvert}(i)
    \dlc^{r+2(s-pi)(p-1)} \stp^i(x) ,  Q_j(y)\Bigr\rangle = \\
    \bigl\langle  \dlc^r(x) , \sum_{i=0}^{\bigl\lfloor \frac s p\bigr\rfloor} 
    a_{j,s,\lvert y \rvert}(i)  Q_{j+2(pi-s)(p-1)}  \stp_i(y) \bigr\rangle
  \end{multline}
  follows. 
  The second summand is 
  \[ \delta^r\sum_{i=0}^{\bigl\lfloor \frac{s-1}{p} \bigr\rfloor}b^{r,s,\lvert x\rvert }(i) 
  d^{r,s,\lvert x\rvert}(i)  \dlc^{r+2(s-pi)(p-1)-p}  \stp^i\beta (x). \]
  On the cohomological side, the term $\delta^r \bigl\langle 
  \dlc^{r-p+2(s-pi)(p-1)} \stp^i\beta (x), Q_j(y)\bigr\rangle$ is
  nonzero only if $r$ is even, $j = r-p+2(s-pi)(p-1)$ and either
  \begin{equation*}
  2(p-1) \mid (p-1)\lvert y \rvert +j \quad \mathrm{or} \quad
  2(p-1) \mid (p-1)\lvert y \rvert +j+1.
  \end{equation*}
  Note that $j$ is odd if $r$ is even, so $2(p-1) \nmid (p-1)\lvert y \rvert +j$,
  eliminating the first of these two cases. Let $q := \frac{r-j-p+2(p-1)s}{2(p-1)p}$ 
  and call an index $j$ {\em bad\/} if $j = r-p+2(s-pi)(p-1)$ and 
  $2(p-1) \mid (p-1)\lvert y \rvert +j+1$. Then the equality
  \begin{multline}
    \bigl\langle  \delta^r \sum_{i=0}^{\bigl\lfloor \frac{s-1}{p}  \bigl\rfloor}
    b^{r,s,\lvert x\rvert}(i)d^{r,s,\lvert x\rvert}(i)   \dlc^{r-p+2(s-pi)(p-1)}  \stp^i\beta(x),
    Q_j(y) \bigr\rangle \\
     =  \begin{cases}
      \delta^r b^{r,s,\lvert x\rvert}(q)\gamma(\lvert x\rvert,r)\bigl\langle\stp^q\beta(x),y \bigr\rangle
     & q \in \N, j\ \mathrm{bad}\\
     0  & \mathrm{else}
    \end{cases}
  \end{multline}
  holds.
  On the homological side, $\delta_j \langle  \dlc^r(x),
  Q_{j+p+2(pi-s)(p-1)}\beta  \stp_i(y) \rangle$ is nonzero only if
  $j$ is odd, $r = j+p+2(pi-s)(p-1)$ (implying $r$ even) and either
  \begin{align*}
    2(p-1) \mid (p-1)(\lvert y \rvert -2(p-1)i-1) + j+p+2(pi-s)(p-1) \\
    \intertext{or}
    2(p-1) \mid (p-1)(\lvert y \rvert -2(p-1)i-1) + j+p+2(pi-s)(p-1) + 1.
  \end{align*}
  Note that the second divisibility cannot occur if $j$ is odd, thereby reducing
  these conditions on $j$ simply to being bad. The equality
  \begin{align*}
    \bigl\langle  \dlc^r(x),  \delta_j \sum_{i=0}^{\bigl\lfloor \frac{s-1}{p} \bigr\rfloor} 
    & b_{j,s,\lvert y \rvert}(i)   Q_{j+p+2(pi-s)(p-1)}
    \beta  \stp_i(y) \Biggr\rangle \\
    & =  \begin{cases}
      \delta_j b_{j,s,\lvert y \rvert}(q) \gamma(\lvert x \rvert ,r) \bigl\langle x,\beta  \stp_q(y)
      \bigr\rangle &  q \in \N, j\ \mathrm{bad}\\
      0& \mathrm{else}
    \end{cases}
  \end{align*}
  follows. Since $\lvert \stp^q\beta(x)\rvert =\lvert y \rvert $ and $\delta^r = \delta_j$ imply 
  $b^{r,s,\lvert x\rvert}(q) = b_{j,s,\lvert y \rvert}(q)$, 
  \begin{multline}\label{eq:dl2}
    \bigl\langle \delta^r\sum_{i=0}^{\bigl\lfloor \frac{s-1}{p} \bigr\rfloor}
    b^{r,s,\lvert x\rvert}(i)d^{r,s,\lvert x\rvert}(i)  \dlc^{r-p+2(s-pi)(p-1)} \stp^i\beta (x),
    Q_j(y) \bigr\rangle \\
     =  \bigl\langle  \dlc^r(x), \delta_j \sum_{i=0}^{\bigl\lfloor\frac{s-1}{p} \bigr\rfloor} 
    b_{j,s,\lvert y \rvert}(i)  Q_{j+p+2(pi-s)(p-1)} \beta  \stp_i(y) \bigr\rangle
  \end{multline}
  results.
  Adding equations~(\ref{eq:dl1}) and~(\ref{eq:dl2}) yields the desired result for 
  $r>0$, since the sum of the right sides is just $ \stp_s Q_j(y)$ by Proposition~\ref{hnis}. 
  The extra term in cohomology for $r=0$ does
  not contribute, since
  \[ \Bigl\langle\sum_{n \in N} {\tfrac{1}{e_n}}
  \bigl(\star_{i=1}^p \stp^{n_i}(x)\bigr), Q_j(y)\Bigr\rangle \] 
  is always zero by Proposition~\ref{duality}~\ref{item:pq}. 
  Consider now parings with Browder operations. The left
  hand side is 
  \begin{align*}
    \bigl\langle  \stp^s  \dlc^r (x) ,  L_{n-1}(y,z) \bigr\rangle  & =  
    \bigl\langle  \dlc^r (x),  \stp_s L_{n-1}(y,z) \bigr\rangle \\
    & = \sum_{i=0}^s  \bigl\langle\dlc^r (x), L_{n-1}\bigl(\stp_i(y),\stp_{s-i}(z)\bigr)\bigr\rangle \\
    & =  0
  \end{align*}
  and thus coincides with the right hand side, 
  because $\bigl\langle  \dlc^r(x),  L_{n-1}(y,z) \bigr\rangle = 0=
  \bigl\langle \star_{i=0}^p x_i ,  L_{n-1}(y,z) \bigr\rangle$.

  Finally, pairings with products have to be treated. 
  Proposition~\ref{duality}~\ref{item:qp} implies that only
  $p$-fold products may contribute.
  Let $N^{\prime\prime} =\{n\in \N^p\co n_1+\dotsm+n_p=s\}$. The left
  hand side equals
  \begin{align*}
    \bigl\langle \stp^s \dlc^r(x), \ast_{i=1}^p y_i\bigr\rangle 
    & =  \bigl\langle  \dlc^r(x), \stp_s(\ast_{i=1}^py_i)\bigr\rangle \\
    & = \sum_{n\in N^{\prime\prime}}\bigl\langle \dlc^r(x), \ast_{i=1}^p 
    \stp_{n_i}(y_i)
    \bigr\rangle
  \end{align*}
  which is zero unless $r=0$. In the case $r=0$, the result is
  \begin{align*}
    \bigl\langle  \stp^s \dlc^0(x), \ast_{i=1}^p y_i\bigr\rangle 
    & =  \sum_{n \in \N^{\prime\prime}}\prod_{i=1}^p
    \bigl\langle x,  \stp_{n_i}(y_i) \bigr\rangle.
  \end{align*}
  The right hand side $\bigl\langle  \dlc^{r+2(s-pi)(p-1)}
  \stp^i(x), \ast_{i=0}^p y_i\bigr\rangle$ is zero unless $r+2(s-pi)(p-1)=0$. In
  that case 
  \[ \frac{s}{p} \geq \Bigl\lfloor \frac{s}{p} \Bigr\rfloor \geq i = \frac{r+2(p-1)s}{2(p-1)p} \] 
  which implies $r=0$ (and then also $i = \tfrac{s}{p}$). It follows that
  \begin{multline*}
    \Bigl\langle \sum_{i=0}^{\bigl\lfloor \frac{s}{p} \bigr\rfloor} a^{r,s,\lvert x\rvert}(i)c^{r,s,\lvert x\rvert}(i)
    \dlc^{r+2(s-pi)(p-1)} \stp^i(x), \ast_{i=1}^p y_i\Bigr\rangle
    = \\ \begin{cases}
      \prod_{i=1}^p \bigl\langle  \stp^{\frac{s}{p}}(x), y_i \bigr\rangle & r=0, p\mid s\\
      0            & \mathrm{else}                                                 
    \end{cases}
  \end{multline*}
  because $a^{0,s,\lvert x\rvert}\bigl(\tfrac{s}{p}\bigr) =c^{0,s,\lvert x\rvert}\bigl(\tfrac{s}{p}\bigr)= 1$.
  By the same reasoning, the pairing
  $\bigl\langle  \dlc^{r-p+2(s-pi)(p-1)}
  \stp^i\beta (x) , \ast_{i=1}^p y_i \bigr\rangle$ can be nonzero only if
  $r-p+2(s-pi)(p-1)=0$. But then $\delta^{r} = 0$, so the second term
  \[ \bigl\langle  \delta^r \sum_{i=0}^{\bigl\lfloor \frac{s-1}p \bigr\rfloor}
  b^{r,s,\lvert x\rvert}(i)d^{r,s,\lvert x\rvert}(i)  \dlc^{r-p+2(s-pi)(p-1)} 
  \stp^i\beta (x), \ast_{i=1}^p y_i \bigr\rangle = 0\]
  vanishes completely.
  Again, this proves the claim in the case $r>0$, as both sides of the claimed equation
  pair to zero. In the case $r=0$ one additionally calculates 
  \begin{align*}
    \bigl\langle \sum_{n \in N} \tfrac{1}{e_n} 
    \bigl(\stp^{n_1}(x)\star \dotsm \star \stp^{n_p}(x)\bigr), \ast_{i=1}^p y_i \bigr\rangle 
    & =  \sum_{n \in N}\tfrac{1}{e_n} \sum_{\tau \in \Sigma_p} \prod_{i=1}^p
    \bigl\langle  {\stp^{n_i}}(x),y_{\tau(i)}\bigr\rangle \\
    & =  \sum_{n \in N}\tfrac{1}{e_n} \sum_{\tau \in \Sigma_p} \prod_{i=1}^p
    \bigl\langle  {\stp^{(\tau\cdot n)_i}}(x),y_i\bigr\rangle \\
    & =  \sum_{\tau \in \Sigma_p} \sum_{n \in \tau\cdot N}
    e^{-1}_{\tau^{-1}\cdot n} \prod_{i=1}^p \bigl\langle \stp^{n_i}(x),y_i\bigr\rangle \\
    & =  \sum_{\tau \in \Sigma_p} \sum_{n \in \tau\cdot N} e^{-1}_n
    \prod_{i=1}^p \bigl\langle  \stp^{n_i}(x),y_i\bigr\rangle
  \end{align*}
  since $e_{\tau^{-1}\cdot n} = e_n$,
  where $(\tau \cdot n)_i = n_{\tau^{-1}(i)}$ for $\tau \in \Sigma_p$.
  Let 
  $N^\prime = \{n\in N^{\prime\prime}\co \exists\, 1\leq i,j \leq p \co n_i\neq n_j\}$.  
  For every $n \in \tau \cdot N$ there exist $i,j$ with $n_i\neq
  n_j$ by the defintion of $N$, thus the sums may be combined to give 
  \begin{equation*}
    \bigl\langle \sum_{n \in N} \tfrac{1}{e_n}\bigl(\star_{i=1}^p 
    \stp^{n_i}(x)\bigr), \ast_{i=1}^p y_i \bigr\rangle
     =  \sum_{n\in N^\prime} \frac{\lvert \bigl\{\tau \in \Sigma_p \co n \in t \cdot N
      \bigr\}\rvert}{e_n} \prod_{i=1}^p \bigl\langle 
    \stp^{n_i}(x),y_i\bigr\rangle
  \end{equation*}
  because $e_n\neq 0$ for all such $n$, as mentioned at the 
  beginning of the proof.
  If $\tau\cdot n = \sigma \cdot m$ for $n,m \in N$ and $\tau,
  \sigma \in \Sigma_p$, then $n = m$, since $n$ and $m$ are ascending. 
  Hence for every $n \in \N^p$ there exists an $m \in N$ such that 
  \begin{equation}\label{eq:set}
    \bigl\{\tau \in \Sigma_p \co n \in t \cdot N \bigr\} = 
  \bigl\{\tau \in \Sigma_p \co n = \tau
  \cdot m \bigr\}
  \end{equation}
  and $m$ is unique if this set is nonempty. 
  In that case, it is a coset of the isotropy group of $n$. 
  By the definition of $N$,  the set~(\ref{eq:set}) is empty
  exactly if $\sum n_i \neq s$, whereas
  its cardinality is just $e_n$ if it is nonempty.
  Hence
  \begin{align*}
    \bigl\langle \sum_{n \in N} \tfrac{1}{e_n}\bigl(\star_{i=1}^p 
    \stp^{n_i}(x)\bigr), \ast_{i=1}^p y_i \bigr\rangle
    & =  \sum_{n\in N^{\prime}}\prod_{i=1}^p \bigl\langle \stp^{n_i}(x),y_i\bigr\rangle \\
    &= \begin{cases}
    \sum\limits_{n\in N^{\prime\prime}} \prod\limits_{i=1}^p
    \bigl\langle  \stp^{n_i}(x),y_i\bigr\rangle - \prod\limits_{i=1}^p
    \bigl\langle  \stp^{\frac sp}(x),y_i\bigr\rangle & 
    p \mid s\\
    \sum\limits_{n\in N^{\prime\prime}} \prod_{i=1}^p
    \bigl\langle  \stp^{n_i}(x),y_i\bigr\rangle
    &    p\nmid s
  \end{cases}
  \end{align*}
  Summing these equations finishes
  the proof.
\end{proof}

It remains to supply the proof of the (non)linearity statement~\ref{qcal}.

\begin{proof}
  The second equality follows from the first and Corollary~\ref{cor:div-power}. 
  The proof 
  of the first equation proceeds as the proof of Proposition~\ref{nish}
  given above. It suffices to prove the equations for homogeneous
  elements, so let $x,y\in H^d(D_{n,j}X)$ and $z\in H_d(D_{n,j}X)$.
  Pairing the equation 
  \[ \dlc^r(x+y) = \dlc^r(x)+\dlc^r(y) \]
  with $Q_s(z)$ yields
  \begin{equation*}
   \bigl\langle \dlc^r(x+y) ,Q_s(z)\bigr\rangle 
	    = \begin{cases}
               \gamma(d,r)\bigl\langle x+y ,z \bigr\rangle &   r = s = t(p-1)-\epsilon,\,  \epsilon\in\{0,1\}  \\
		0                                 & \mathrm{otherwise}
               \end{cases}
  \end{equation*}
  and 
  \begin{align*}
    \bigl\langle \! \dlc^r(x)+\dlc^r(y),Q_s(z)\bigr\rangle 
    & = \bigl\langle \dlc^r(x),Q_s(z)\bigr\rangle +
    \bigl\langle \dlc^r(y),Q_s(z)\bigr\rangle  \\
    &= \begin{cases}
               \gamma(d,r)\bigl\langle x+y ,z \bigr\rangle
		&   r\! = \!s \!= \!t(p-1)-\epsilon,\,  \epsilon\in\{0,1\}  \\
		0                                 & \mathrm{otherwise}
               \end{cases}
  \end{align*}
  by part~\ref{item:qq} of Proposition~\ref{duality}, 
  since $\lvert x\rvert = \lvert y\rvert =d$. 
  In the case $r=0$, the summand $\sum\limits_{k=1}^{p-1} \frac{1}{(p-1)!k!} x^{\star k} \star y^{\star(p-k)}$
  pairs trivially with $Q_s(z)$ by  part~\ref{item:pq} of Proposition~\ref{duality}.
  Pairing each of these summands with the value of a homological
  Browder operation also yields zero by Proposition~\ref{duality} 
  part~\ref{item:ql} and part~\ref{item:pl}.

  It remains to check pairings with products. Let
  $z_1,\dotsc,z_k\in H_\ast(D_{n,\ast}X)$. If $r>0$ or $k\neq p$, 
  \[ \bigl\langle \dlc^r(x+y) ,\ast_{i=1}^k z_i \bigr\rangle
  =\bigl\langle \dlc^r(x)+\dlc^r(y) ,\ast_{i=1}^k z_i\bigr\rangle=0\]
  by part~\ref{item:qp} of Proposition~\ref{duality}, and
  if $r=0$ and $k=p$ 
  \begin{align*}
    \bigl\langle \!\dlc^0(x+y) ,\ast_{i=1}^p z_i \bigr\rangle
    & =\prod_{i=1}^p \langle x+y , z_i \rangle\\
    & =\prod_{i=1}^p \bigl(\langle x , z_i \rangle+ \langle y , z_i \rangle\bigr)\\
    &=\prod_{i=1}^p \langle x , z_i \rangle +\!\prod_{i=1}^p \langle y,z_i\rangle
	    + \!\sum_{j=1}^{p-1} \frac{1}{(p-j)!j!} \bigl\langle x^{\star j}y^{\star(p-j)}\! , \ast_{i=1}^p z_i \bigr\rangle
  \end{align*}
  holds by a calculation. 
  Computing the pairing
  \[\bigl\langle \dlc^0(x)+\dlc^0(y)+\sum_{j=1}^{p-1} \frac{1}{(p-j)!j!} x^{\star j}y^{\star (p-j)}, \ast_{i=1}^p z_i \bigr\rangle \]
  yields the same result, which provides the desired equation.
\end{proof}

\end{document}